
\documentclass[12pt, a4paper]{article}

\usepackage{amsmath} 
\usepackage{bm}
\usepackage{tabularx}
\usepackage{booktabs}
\usepackage{ragged2e}
\usepackage{threeparttable}
\usepackage{amssymb} 
\usepackage{amsthm}  
\usepackage{graphicx} 
\usepackage[left=2.5cm,right=2.5cm,top=2.5cm,bottom=2.5cm]{geometry} 
\usepackage{hyperref} 
\usepackage{authblk} 

\usepackage{siunitx}
\usepackage{caption}

\usepackage{algorithm}
\usepackage{algpseudocode}
\usepackage{tikz}
\usepackage[noabbrev]{cleveref}
\usepackage{makecell}
\usepackage{multirow}

\newtheorem{definition}{Definition}
\newtheorem{fact}{Fact}
\newtheorem{remark}{Remark}
\newtheorem{corollary}{Corollary}
\newtheorem{theorem}{Theorem}
\newtheorem{lemma}{Lemma}
\newtheorem{proposition}{Proposition}

\usepackage[acronym]{glossaries}
\newacronym{cpwl}{CPWL}{continuous piecewise linear}
\newacronym{milp}{MILP}{mixed-integer linear programming}
\newacronym{dc}{DC}{difference-of-convex}
\newacronym{nn}{NN}{neural network}




\title{Tightening the mixed integer linear formulation for the piecewise linear approximation in general dimensions}

\author[1]{Quentin Ploussard}
\author[2]{Xiang Li}
\author[1]{Matija Pavičević}

\affil[1]{Argonne National Laboratory, Lemont, IL, USA}
\affil[2]{Zhejiang University, Hangzhou, China}

\date{\today}

\begin{document}

\maketitle 

\begin{abstract}
This paper addresses the problem of tightening the mixed-integer linear programming (MILP) formulation for continuous piecewise linear (CPWL) approximations of data sets in arbitrary dimensions. The MILP formulation leverages the difference-of-convex (DC) representation of CPWL functions. We introduce the concept of well-behaved CPWL interpolations and demonstrate that any CPWL interpolation of a data set has a well-behaved version. This result is critical to tighten the MILP problem. We present six different strategies to tighten the problem, which include fixing the values of some variables, introducing additional constraints, identifying small big-M parameter values and applying tighter variable bounds. These methods leverage key aspects of the DC representation and the inherent structure of well-behaved CPWL interpolations. Experimental results demonstrate that specific combinations of these tightening strategies lead to significant improvement in solution times, especially for tightening strategies that consider well-behaved CPWL solutions.
\end{abstract}

\textbf{Keywords:} Integer programming, Linear programming, Piecewise linear approximation


\section{Introduction}
\label{sec:Intro}
A \gls{cpwl} function is a continuous function defined on a compact set which can be partitioned into a finite set of affine domains whithin which the function is affine. 
\Gls{cpwl} functions are a fundamental tool in mathematics, engineering, and data science. This is due to their ability to approximate any continuous nonlinear function defined on a compact set to an arbitrary level of precision using a finite number of linear pieces (\cite{huang_relu_2020}). \gls{cpwl} functions are also an essential tool when fitting a data set for which the analytical expression of the nonlinear relationship is unknown (\cite{rebennack_piecewise_2020}). In optimization models, using \gls{cpwl} functions to represent complex nonlinear relationships enables the use of mature \gls{milp} solvers, which are often significantly faster than nonlinear solvers (\cite{vielma_mixed-integer_2010,vielma_mixed_2015}).

Not only can \gls{cpwl} functions be represented in \gls{milp} models, the optimal \gls{cpwl} fitting problem itself can be expressed as a \gls{milp} problem (\cite{toriello_fitting_2012}). The objective function may vary depending on the application. For instance, given a fixed number of affine functions, one may wish to determine the \gls{cpwl} approximation that minimizes an approximation error metric, \textit{e.g.}, the maximum or average error (\cite{rebennack_piecewise_2020}). Alternatively, one may wish to identify the \gls{cpwl} approximation with the minimal number of affine pieces (\cite{kong_derivation_2020}). 

\gls{milp} formulations for the \gls{cpwl} fitting problem have been extensively investigated for univariate functions and one-dimensional data sets. \cite{frenzen_number_2010} studied optimal partitioning of the range of the univariate function to be approximated and provided a quantitative result regarding how the number of segments depends on the function and the approximation error. \cite{rebennack_piecewise_2020} developed a finitely convergent method to generate \gls{cpwl} approximations for one-dimensional data sets or continuous univariate functions identifying the minimal number of breakpoints required to meet an acceptable approximation error. Similarly, \cite{kong_derivation_2020} introduced a novel method of enforcing continuity at breakpoints through a set of linear constraints. The two formulations are compared in detail in \cite{warwicker_comparison_2022}, and it is shown that the formulation from \cite{rebennack_piecewise_2020} is generally faster in practice. The work from \cite{warwicker_generating_2023} extends the work from \cite{warwicker_comparison_2022} and uses a Benders decomposition approach to improve the solution time of the \gls{milp} problem. \cite{ploussard_piecewise_2024} introduce a tightening method that significantly shorten the \gls{milp} solution of the \gls{cpwl} fitting problem when hierarchically minimizing the number of linear pieces and approximation error. 

For bivariate functions and two-dimensional data sets, most existing methods constrain the affine domains of the \gls{cpwl} approximation to be simplices. For example, \cite{dambrosio_piecewise_2010} partition the \gls{cpwl} function domain into rectangles that are later split into triangles along predefined diagonals. \cite{toriello_fitting_2012} use a similar method but select a set of splitting diagonals that minimize the approximation error. \cite{rebennack_continuous_2015} use a different triangulation strategy that iteratively partitions the triangles covering the approximation domain into smaller ones until a target approximation error is reached. \cite{duguet_piecewise_2022} propose a similar iterative refinement, but without imposing the affine domains to be simplices.  

The literature addressing the optimal \gls{cpwl} fitting problem for datasets of higher dimensions is significantly more limited (\cite{misener_piecewise-linear_2010, rebennack_continuous_2015}). Besides, most of the existing methods essentially extend the simplex-based partitioning strategy used for two dimensions, which result in computational challenges due to the exponential growth of number of possible simplex-based partitions as the data dimension increases (\cite{hughes_simplexity_1996}).  
\cite{kazda_nonconvex_2021} introduced the first \gls{milp} formulation able to identify the optimal \gls{cpwl} approximation of data sets of any dimension for a given maximum approximation error. They achieve this by leveraging the \gls{dc} representation of \gls{cpwl} functions (\cite{kripfganz_piecewise_1987}) which allows for the affine domains partitioning the \gls{cpwl} function to be implicitly defined by the difference of the pointwise maxima of two sets of affine functions. The strength of the \gls{dc} representation lies in its ability to describe any \gls{cpwl} function with affine domains of arbitrary shape. However, the computational cost of the \gls{milp} problem rapidly increases with the dimension of the data set, the number of data points, and the number of affine functions.

Deep learning techniques offer an alternative way to identify \gls{cpwl} approximations (\cite{huang_relu_2020}). In fact, the function represented by a \gls{nn} using ReLU activation function is a \gls{cpwl} function and the \gls{nn} loss function represents the \gls{cpwl} approximation error (\cite{devore_neural_2021}). To the best of our knowledge, the only existing methods to identify \gls{cpwl} approximations of datasets in multiple dimensions without explicitly specifying the partitioning of affine domains are the ReLU \gls{nn} approach and the \gls{dc} \gls{milp} approach introduced by \cite{kazda_nonconvex_2021}. Training a \gls{nn} to identify a good \gls{cpwl} approximation is significantly faster than solving a \gls{milp} problem, contributing to \gls{nn} growing popularity in recent years. However, the \gls{milp} approach provides three key advantages over the ReLU \gls{nn} approach:
\begin{itemize}
    \item The \gls{nn} approach uses a gradient-based method that identifies local optimas whereas the \gls{milp} approach is guaranteed to yield the \gls{cpwl} function with the smallest approximation error.
    \item Contrary to the \gls{milp} approach, the \gls{nn} approach does not allow for the explicit enforcement of a target maximum approximation error.
    \item Due to the complex relationship between the structure (width and depth) of a \gls{nn} and the number of affine pieces of the \gls{cpwl} functions they can model, it is difficult for \gls{nn}s to identify good \gls{cpwl} approximations with a manageable number of affine pieces (\cite{chen_improved_2023}). Conversely, the \gls{milp} approach allows the user to identify \gls{cpwl} functions with a predefined, or minimal, number of linear pieces. This is a critical feature when these \gls{cpwl} functions are subsequently embedded in \gls{milp} formulations to model nonlinear relationships, where the number of binary variables required to model \gls{cpwl} functions is typically proportional to the number of affine pieces that compose them. In other words, while a \gls{nn} can quickly identify a good-quality \gls{cpwl} approximation of a nonlinear relationship, the resulting approximation may render the \gls{milp} formulation in which it is embedded computationally intractable.
\end{itemize}

To address the high computational cost of solving the \gls{milp} problem, authors in \cite{kazda_linear_2024} introduce an iterative LP approach that identifies a valid \gls{cpwl} approximation given a target maximum error. However, the identified \gls{cpwl} solution is not necessarily optimal. A priori, identifying the global optimum of the \gls{cpwl} fitting problem can only be achieved by solving a \gls{milp} problem. Therefore, it is critical to improve the \gls{milp} formulation of the \gls{cpwl} fitting problem to reduce its computational cost. The method described in \cite{ploussard_piecewise_2024} achieves a tightening of the \gls{milp} problem that significantly improves the solution time, but the method only applies to one-dimensional data sets. This raises the need to develop \gls{milp} tightening methods that can be extended to data sets of any dimensions.

This paper aims to address this research gap. To achieve this, we introduce a critical class of \gls{cpwl} interpolations called ``well-behaved'' and demonstrate that any \gls{cpwl} interpolation has a well-behaved version. This important result is then leveraged to identify six tightening strategies that can be combined to significantly reduce the solution time of the \gls{milp} \gls{cpwl} fitting problem.

The five main contributions of this paper are the following. First, we formalize the concept of well-behaved \gls{cpwl} interpolations, a class of \gls{cpwl} interpolations in which each affine piece interpolates a number of points greater than the dimension of the interpolated data set. Second, we demonstrate that any \gls{cpwl} interpolation of a data set has a well-behaved version, which serves as a key theoretical foundation for our tightening method. Third, we leverage this result to introduce six tightening strategies for the \gls{milp} formulation of the \gls{cpwl} fitting problem. Fourth, we analyze the theoretical impact of each tightening strategy together with the time complexity of the preprocessing step involved. Fifth, we assess the practical impact of multiple combinations of the tightening strategies. The proposed tightening strategies enables the identification of an optimal \gls{cpwl} approximation in a relatively low computation time (\textit{i.e.}, up to 20 times faster than the existing literature).

The remainder of the article is organized as follows: Section 2 introduces the theoretical background of the \gls{cpwl}-fitting problem. Section 3 describes the proposed \gls{milp} formulation. Section 4 formalizes the concept of well-behaved \gls{cpwl} fitting and demonstrates the fact that any \gls{cpwl} interpolation or approximation of a data set has a well-behaved version. Section 5 introduces the six tightening strategies of the \gls{milp} formulation. Section 6 introduces the case studies and asses the performances of several combinations of tightening strategies. Section 7 presents the conclusions.

\section{Background}

Let $S = (\bm{x}_{i},z_i)_{i=1,...,N} = (x_{i,1}, ..., x_{i,d},z_i)_{i=1,...,N}$ be a set of $N$ points of $\mathbb{R}^{d+1}$. We assume that the points $(\bm{x}_{i})_{i=1,...,N}$ are in general position in $\mathbb{R}^{d}$, \textit{i.e.}, any subset of $d+1$ points is affinely independent (\cite{edelsbrunner_constructing_1986}). Let $P_{\mathbb{R}^{d}}: \mathbb{R}^{d+1} \rightarrow \mathbb{R}^{d}$ be the projection defined as $P_{\mathbb{R}^{d}}(x_{1}, ..., x_{d}, z) = (x_{1}, ..., x_{d})$. Let $S_{\mathbb{R}^{d}} = (\bm{x}_{i})_{i=1,...,N}$ be the projection of $S$ by $P_{\mathbb{R}^{d}}$. Let $D = Conv( S_{\mathbb{R}^{d}} )$ be the convex hull of $S_{\mathbb{R}^{d}}$.

\begin{definition}
\label{defCPWL}
We say that $f: D \rightarrow \mathbb{R}$ is a \gls{cpwl} function when $f$ is continuous and there is a set of $P$ affine functions $\{ f_{k}: D_{k} \rightarrow \mathbb{R}, k=1,...,P\}$ such that the $D_k$ are compact,  $\bigcup\limits_{k=1}^{P} D_{k} = D$, and $f_{|D_{k}} = f_{k}, \forall k \in \{1,...,P\}$. The $f_{k}$ are called the affine pieces of $f$, and the $D_{k}$ are called the affine domains of $f$.
\end{definition}

Note that, contrary to the more common definition of \gls{cpwl} functions from \cite{geisler_using_2012}, we do not assume that the $D_{k}$ form a partition of $D$. The intersection of two affine domains is not required to have an empty interior. In addition, an affine domain can be empty, and two affine domains can be equal as long as the corresponding affine pieces are equal. This implies that the set of affine functions representing the \gls{cpwl} function is not unique. However, apart from this, \cref{defCPWL} describes exactly the same mathematical object as the \gls{cpwl} functions defined in the literature. The flexibility of our definition enables us to compare \gls{cpwl} functions to one another, which will be useful in the following section.

\begin{fact}
\label{fact1}
Let $f: D \rightarrow \mathbb{R}$ be a \gls{cpwl} function. $f$ can be expressed as the difference of two convex \gls{cpwl} functions $f^+$ and $f^-$. Specifically, there is a set of $P^+$ affine functions $\{f_j^+: D \rightarrow \mathbb{R}, j \in \{1,...,P^+\}\}$ and $P^-$ affine functions $\{f_k^-: D \rightarrow \mathbb{R}, k \in \{1,...,P^-\}\}$ such that:
$$f(\bm{x}) = f^+(\bm{x}) - f^-(\bm{x}) = \max_{j \in \{1,...,P^+\}}^{} f_j^+(\bm{x}) - \max_{k \in \{1,...,P^-\}}^{} f_k^-(\bm{x}), \forall \bm{x} \in D$$

$f_j^c$ are called the affine pieces of $f^c$, and $D_j^c = \{\bm{x} \in D: f^c(\bm{x}) = f_j^c(\bm{x})\}$ are called the affine domains of $f^c, j \in \{1,...,P^c\}, c \in \{+,-\}$. In addition, $\{D_j^c: j \in \{1,...,P^c\}\}$ is a partition of $D$.
\end{fact}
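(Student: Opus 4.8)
The plan is to construct the two convex parts explicitly by convexifying $f$. First I would extend each affine piece $f_k$ of $f$ to an affine map $\ell_k$ defined on all of $\mathbb{R}^{d}$ (hence on $D$); this is possible because each $f_k$ is the restriction to $D_k$ of a single affine function. Continuity of $f$ together with finiteness of the pieces means that the only places where $f$ fails to be affine lie along the finitely many ``crease'' hyperplanes $H_{ij} = \{\bm{x} : \ell_i(\bm{x}) = \ell_j(\bm{x})\}$, taken over the pairs $i,j$ with $\ell_i \neq \ell_j$. The idea is then to add a convex correction large enough to absorb every concave kink of $f$, leaving a convex remainder.

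Concretely, I would set
$$f^-(\bm{x}) = c \sum_{i<j:\, \ell_i \neq \ell_j} \max\bigl(0,\ \ell_i(\bm{x}) - \ell_j(\bm{x})\bigr)$$
for a constant $c>0$ to be chosen, and define $f^+ = f + f^-$. Each summand is a convex piecewise-linear function with a single kink along $H_{ij}$, so $f^-$ is convex as a finite sum of convex functions. The crux is to show $f^+$ is convex for $c$ large. Here I would use the fact that a continuous function on the convex set $D$ is convex iff it is locally convex, which for a piecewise-linear function reduces to checking the gradient jump across each crease facet. Near a generic point of a crease hyperplane $H$, every ramp whose hyperplane differs from $H$ is affine and contributes no kink, so the gradient jump of $f^+$ across $H$ equals the bounded jump of $f$ plus a convex kink whose magnitude grows with $c$, coming from the ramps supported on $H$; and every actual crease of $f$ does carry such a ramp, since it equals some $H_{ij}$. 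Choosing $c$ larger than the finitely many gradient jumps of $f$ then makes the net jump that of a convex kink across every crease, while a ramp supported on a hyperplane that is not a crease of $f$ only introduces an additional convex kink, preserving convexity. Hence $f^+$ is locally convex everywhere and therefore convex.

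Having produced convex piecewise-linear $f^+$ and $f^-$ with $f = f^+ - f^-$, I would finish by rewriting each in canonical max-of-affine form: any convex piecewise-linear function equals the pointwise maximum of the distinct affine maps with which it coincides on its full-dimensional linearity cells, yielding the required families $\{f_j^+\}$ and $\{f_k^-\}$ with $f^c = \max_j f_j^c$. The partition claim then follows immediately: for a convex function written as such a maximum, the sets $D_j^c = \{\bm{x} \in D : f^c(\bm{x}) = f_j^c(\bm{x})\}$ are closed polyhedra that cover $D$ and whose interiors are pairwise disjoint (a point in two interiors would force two of the affine maps to agree on a neighborhood, hence to be equal), which is exactly the asserted subdivision of $D$.

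The main obstacle I anticipate is the convexity verification for $f^+$: making rigorous that (i) local convexity across every codimension-one crease, together with continuity on a convex domain, implies global convexity including at the codimension-$\geq 2$ intersections of creases, and (ii) the ramp coefficient can be chosen to dominate the concave kinks uniformly, which works precisely because $f$ has finitely many distinct gradients and hence a uniformly bounded gradient jump across its creases. Both are standard once the local structure is set up, but they are the load-bearing steps; the reduction to max-of-affine form and the partition statement are then routine.
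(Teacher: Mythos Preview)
The paper does not prove this fact itself; it simply cites Kripfganz and Pl\"ochinger (1987) and moves on. So there is no in-paper argument to compare yours against.

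Your construction is a correct and standard route to the DC decomposition. The ramp sum $f^{-}=c\sum_{i<j}\max\bigl(0,\ell_i-\ell_j\bigr)$ is convex by inspection, and at each crease facet of $f$ (which by continuity of $f$ necessarily lies on some $H_{ij}$) the corresponding $(i,j)$-ramp contributes a convex kink of magnitude $c\lVert\nabla\ell_i-\nabla\ell_j\rVert$, which dominates the possible concave kink of $f$ there (of magnitude $\lVert\nabla\ell_i-\nabla\ell_j\rVert$) already for $c\ge 1$; the additional ramps supported on hyperplanes that are not creases of $f$ only introduce further convex kinks and do no harm. So your ``choose $c$ large enough'' works, and in fact $c=1$ suffices. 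The obstacle you flag---passing from codimension-one local convexity to global convexity through the lower-dimensional skeleton---is handled in the standard way for polyhedral functions: check convexity along generic lines (which meet only codimension-one faces, where your kink analysis applies) and extend to all lines by continuity. One terminological remark: the paper's word ``partition'' is informal, since the sets $D_j^c$ share their boundaries; your reading as ``closed polyhedra covering $D$ with pairwise disjoint interiors'' is the intended meaning, and your argument that two distinct maximizing affine pieces cannot agree on an open set is exactly the right justification.
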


This fact is demonstrated in \cite{kripfganz_piecewise_1987}. The above formulation is called the \gls{dc} representation of $f$. Note that the \gls{dc} representation of a \gls{cpwl} function is not unique.

\begin{fact}
\label{fact2}
Let $f: D \rightarrow \mathbb{R}$ be a \gls{cpwl} function. Let $f = f^+ - f^-$ be a DC representation of $f$. The affine domains of $f^+$ and $f^-$ are compact. In addition, any affine domain of $f$ is the intersection of an affine domain of $f^+$ and $f^-$. In other words, for any affine domain $D_p$ of $f$, there is an affine domain $D_j^+$ and $D_k^-$ of $f^+$ and $f^-$ such that $D_p = D_j^+ \cap D_k^-$, and $f(\bm{x}) = f_j^+(\bm{x}) - f_k^-(\bm{x}), \forall x \in D_p$.
\end{fact}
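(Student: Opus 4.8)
The plan is to handle the two assertions of \cref{fact2} in turn: compactness of the domains $D_j^c$, then the cellular description of $f$.

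For compactness I would express each affine domain of $f^{+}$ as a finite intersection of closed sets inside the compact set $D$. Since $f^{+}=\max_i f_i^{+}$, a point $\bm{x}\in D$ lies in $D_j^{+}$ precisely when $f_j^{+}(\bm{x})\ge f_i^{+}(\bm{x})$ for every $i$, so
\[
D_j^{+}=\bigcap_{i=1}^{P^{+}}\{\bm{x}\in D:(f_j^{+}-f_i^{+})(\bm{x})\ge 0\}.
\]
Each $f_j^{+}-f_i^{+}$ is affine, hence each set in the intersection is the trace on $D$ of a closed half-space and is closed; a finite intersection of such sets is closed. Because $D=Conv(S_{\mathbb{R}^{d}})$ is the convex hull of finitely many points it is compact, and a closed subset of a compact set is compact, so $D_j^{+}$ is compact. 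The same argument applied to $f^{-}$ gives compactness of every $D_k^{-}$, and since an intersection of two compact sets is compact, each $D_j^{+}\cap D_k^{-}$ is compact as well.

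For the decomposition the engine is \cref{fact1}, which guarantees that $\{D_j^{+}\}_j$ and $\{D_k^{-}\}_k$ are each partitions of $D$. Fixing $\bm{x}\in D$, I would pick indices $j,k$ with $\bm{x}\in D_j^{+}$ and $\bm{x}\in D_k^{-}$, so that $f^{+}(\bm{x})=f_j^{+}(\bm{x})$ and $f^{-}(\bm{x})=f_k^{-}(\bm{x})$; subtracting yields $f(\bm{x})=f_j^{+}(\bm{x})-f_k^{-}(\bm{x})$. Hence on the whole cell $D_j^{+}\cap D_k^{-}$ the function $f$ agrees with the single affine map $f_j^{+}-f_k^{-}$, and the nonempty cells $\{D_j^{+}\cap D_k^{-}\}_{j,k}$ cover $D$, are compact, and each carries an affine restriction of $f$. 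They therefore form a valid family of affine domains and pieces of $f$ in the sense of \cref{defCPWL}, each equal by construction to the intersection of an affine domain of $f^{+}$ with one of $f^{-}$.

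The delicate point — and the main obstacle — is the phrase ``any affine domain of $f$,'' because \cref{defCPWL} does not single out a unique family of affine domains: one may subdivide a domain or duplicate equal ones. I would resolve this exactly as the discussion following \cref{defCPWL} invites, reading the affine domains of $f$ as those \emph{induced} by the fixed DC representation, namely the common-refinement cells $D_j^{+}\cap D_k^{-}$ constructed above; then the identity $D_p=D_j^{+}\cap D_k^{-}$ holds by definition and the affine piece on $D_p$ is the map $f_j^{+}-f_k^{-}$. The only remaining check is internal consistency where cells meet: if $\bm{x}\in(D_j^{+}\cap D_k^{-})\cap(D_{j'}^{+}\cap D_{k'}^{-})$, then $f^{+}(\bm{x})=f_j^{+}(\bm{x})=f_{j'}^{+}(\bm{x})$ and $f^{-}(\bm{x})=f_k^{-}(\bm{x})=f_{k'}^{-}(\bm{x})$, so the two candidate affine values both equal $f(\bm{x})$ and agree. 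This closes the argument while avoiding the non-uniqueness that a literal reading of ``affine domain'' would otherwise create.
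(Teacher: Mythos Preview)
Your argument is correct. The paper itself does not prove \cref{fact2}; it simply states that the result is demonstrated in \cite{kripfganz_piecewise_1987} and ``stems from the DC representation of $f$.'' So there is no detailed proof in the paper to compare against, and what you have written is a clean, self-contained elementary argument filling that gap: the compactness claim via the finite intersection of closed half-spaces with the compact $D$, and the cellular claim via the common refinement $\{D_j^{+}\cap D_k^{-}\}$ together with \cref{fact1}.

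Your flagging of the phrase ``any affine domain of $f$'' is well taken and is the only genuinely delicate point. Read literally against \cref{defCPWL}, the claim is false for an arbitrary representing family (one could take a single-piece $f$ and split $D$ along a hyperplane unrelated to either $f^{+}$ or $f^{-}$). Your resolution---interpreting the affine domains of $f$ as those induced by the fixed DC representation---matches exactly how the paper uses \cref{fact2} downstream (e.g.\ in the proofs of \cref{lemma_neighbor_inequality}, \cref{theorem_number_points_per_plane}, and \cref{lemma5}), so it is the intended reading.
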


This fact is demonstrated in \cite{kripfganz_piecewise_1987} and stems from the DC representation of $f$.

\begin{definition}
We say that $f: D \rightarrow \mathbb{R}$ is an interpolation of, or interpolates, $S$ when $f(\bm{x}_{i}) = z_{i}, \forall i \in \{1,...,N\}$ (\cite{davis_interpolation_1975}).
\end{definition}

\begin{definition}
We say that $f: D \rightarrow \mathbb{R}$ is an $\varepsilon-$approximation of, or $\varepsilon-$approximates, $S$ when there exists $(e_{i})_{i=1,...,N} \in \mathbb{R}^N$ such that $|e_{i}| \leq \varepsilon$ and $f$ interpolates the set $(\bm{x}_{i}, z_{i} + e_{i})_{i=1,...,N}$. In other words, $f$ is an $\varepsilon-$approximation of $S$ if $\max_{i=1,...,N}{|f(\bm{x}_i) - z_i|} \leq \varepsilon$.
\end{definition}

\begin{definition}
\label{definition_equivalent}
Let $f, g: D \rightarrow \mathbb{R}$ and $g: D \rightarrow \mathbb{R}$ be two $\varepsilon-$approximations of $S$. We say that $f$ and $g$ are equivalent with respect to $S$ when $f(\bm{x}_{i}) = g(\bm{x}_{i}), \forall i \in \{1,...,N\}$.
\end{definition}

\section{MILP formulation}

We aim to identify the optimal \gls{cpwl} fitting of a set of points $S$. More specifically, we aim to identify the \gls{cpwl} $\varepsilon-$approximation of $S$ that minimizes an objective $Q$ subject to a given error tolerance $\varepsilon$. For example, $Q$ can be the maximum fitting error, the average fitting error, or the number of affine pieces.

The following \gls{milp} formulation is largely based on the formulation presented in \cite{kazda_nonconvex_2021}:

\begin{equation}
\label{objectiveQ}
    \min Q
\end{equation}
$s.t.$
\begin{equation}
\label{DCequation}
    f(\bm{x}_i) = f^+(\bm{x}_i) - f^-(\bm{x}_i), \quad i \in \{1,...,N\}
\end{equation}
\begin{equation}
\label{ConvexEquation0}
    0 \leq f^c(\bm{x}_i) - {\bm{a}_j^c}^T \bm{x}_i - b_j^c, \quad i \in \{1,...,N\}, \quad j \in \{1,...,P^c\}, \quad c \in \{+,-\}
\end{equation}
\begin{equation}
\label{ConvexEquation}
    f^c(\bm{x}_i) - {\bm{a}_j^c}^T \bm{x}_i - b_j^c \leq M_i^c (1-\delta_{i,j}^c), \quad i \in \{1,...,N\}, \quad j \in \{1,...,P^c\}, \quad c \in \{+,-\}
\end{equation}
\begin{equation}
\label{numberplaneperpoint}
    \sum\limits_{j=1}^{P^c} \delta_{i,j}^c \geq 1, \quad i \in \{1,...,N\}, \quad c \in \{+,-\}
\end{equation}
\begin{equation}
\label{FittingEquation}
    -e_i \leq f(\bm{x}_i) - z_i \leq e_i, \quad  i \in \{1,...,N\}
\end{equation}
\begin{equation}
\label{ErrorBound}
    0 \leq e_i \leq \varepsilon, \quad  i \in \{1,...,N\}
\end{equation}
\begin{equation}
\label{DeltaBinary}
    \delta_{i,j}^c \in \{0,1\}, \quad  i \in \{1,...,N\}, \quad j \in \{1,...,P^c\}, \quad c \in \{+,-\}
\end{equation}

\Cref{objectiveQ} represents the objective function to be minimized. As long as $Q$ is a linear expression, \cref{objectiveQ,DCequation,ConvexEquation0,ConvexEquation,numberplaneperpoint,FittingEquation,ErrorBound,DeltaBinary} formulate a \gls{milp} problem. For the rest of the paper, we will refer to this \gls{milp} problem as $MILP1(Q)$. \Cref{DCequation} is the DC representation of the \gls{cpwl} function, where $f(\bm{x}_i), f^+(\bm{x}_i), f^-(\bm{x}_i)$ are linear variables. \Cref{ConvexEquation0,ConvexEquation} formulate the representation of each convex function $f^c$ as the maximum of a set of affine functions $f_j^c$, where $\bm{a}_j^c \in \mathbb{R}^d$ and $b_j^c \in \mathbb{R}$ are the linear coefficients and bias terms of each affine piece $f_j^c$, and are linear variables in $MILP1(Q)$. $P^+$ and $P^-$ are the numbers of affine pieces of $f^+$ and $f^-$, respectively. \Cref{numberplaneperpoint} ensures that each point $\bm{x}_i$ belongs to at least one affine domain of $f^+$ and one affine domain of $f^-$. Note that, contrary to the literature (e.g. \cite{kazda_nonconvex_2021}), we use an inequality constraint instead of an equality constraint. By doing so, we allow the points $\bm{x}_i$ to belong to multiple affine domains. This can occur if, for example, some affine pieces are identical or if some points are at the border between two or more affine domains. This inequality is also critical to ensure that some well-behaved \gls{cpwl} solutions are feasible in $MILP1(Q)$, which is a property that will be discussed in the next section.  \Cref{FittingEquation} defines the approximation error at each point $\bm{x}_i$ as the distance between $z_i$ and the value of the \gls{cpwl} function in $\bm{x}_i$. \Cref{ErrorBound} ensures that this distance is never greater than the specified error tolerance $\varepsilon$. Finally, \cref{DeltaBinary} defines $\delta_{i,j}^c$ as a binary variable. This binary variable serves as an indicator variable which is equal to 1 when the point $\bm{x}_i$ belongs to the domain $D_j^c$. Note that the big-M parameter $M_i^c$ in \cref{ConvexEquation} must be large enough so it does not constrain the term $f^c(\bm{x}_i) - {\bm{a}_j^c}^T \bm{x}_i - b_j^c$ when $\delta_{i,j}^c = 0$. In \cref{tighteningsection}, we will identify a tight value for $M_i^c$. 

Note that some optimization solvers like Gurobi (\cite{gurobi_optimization_llc_gurobi_2024}) make it possible to formulate \cref{ConvexEquation} without using a big-M parameter . Instead, \cref{ConvexEquation} can be replaced by the indicator constraint below (\cite{gurobi_optimization_llc_modeladdgenconstrindicator_2024}):
\begin{equation}
\label{ConvexEquationIndicator}
    \delta_{i,j}^c = 1 \Rightarrow f^c(\bm{x}_i) - {\bm{a}_j^c}^T \bm{x}_i - b_j^c \leq 0, \quad  i \in \{1,...,N\}, \quad j \in \{1,...,P^c\}, \quad c \in \{+,-\}
\end{equation}
We note $MILPIC1(Q)$ the alternative \gls{milp} formulation using the indicator constraint, which is composed of \cref{objectiveQ,DCequation,ConvexEquation0,ConvexEquationIndicator,numberplaneperpoint,FittingEquation,ErrorBound,DeltaBinary}.

Adding \cref{eq_mean_error} to $MILP1(Q)$ formulates a \gls{milp} in which the average fitting error is minimized. Alternatively, adding \cref{eq_max_error} minimizes the maximum fitting error. 

\begin{equation}
\label{eq_mean_error}
    Q = \frac{1}{N}\sum_{i=1}^N e_i
\end{equation}
\begin{equation}
\label{eq_max_error}
    Q \geq e_i,\quad i \in \{1,...,N\}
\end{equation}

Other objective functions may include the number of affine pieces of  $f^+$, $f^-$, or $f$, as well as a combination of error metric and number of affine pieces. These alternative objective functions can be found in \cref{alternative_objectives}.

\section{Well-behaved CPWL interpolations}
This section introduces a new class of \gls{cpwl} interpolations called ``well-behaved'' \gls{cpwl} interpolations. Loosely speaking, well-behaved \gls{cpwl} interpolations are composed of affine pieces whose gradients are not excessively steep. This class of \gls{cpwl} interpolations aligns with the intuitive representation of what the \gls{cpwl} interpolation of a data set should look like, and they have desirable properties when modeled in $MILP1(Q)$. This section formalizes the definition of a well-behaved \gls{cpwl} interpolation and defines what constitutes a well-behaved version of a \gls{cpwl} interpolation. We then demonstrate that any \gls{cpwl} interpolation has a well-behaved version. This result is critical to the tightening procedure introduced in \cref{tighteningsection}.

\begin{fact}
\label{fact_linear_interpolation}
Let $S = (\bm{x}_{i},z_i)_{i=1,...,d+1}$ be a set of $(d+1)$ points of $\mathbb{R}^{d+1}$, where the $\bm{x}_{i}$ are in general position. There exists a unique linear interpolation of $S$. In other words, $S$ can be interpolated by a unique affine function. This is due to the fact that the system of linear equations 
$\bm{a}^T \bm{x}_{i} + b = z_i, i\in \{1,...,d+1\}$, with variables $\bm{a} \in \mathbb{R}^d$ and $b \in \mathbb{R}$, is exactly determined and has a unique solution due to the general position of the $\bm{x}_{i}$. If the number of points is less than $d+1$, the system is underdetermined and there is an infinite number of possible linear interpolations. If the number of points is greater than $d+1$, because the $\bm{x}_{i}$ are in general position, the system becomes overdetermined and there exists either one or no linear interpolation.
\end{fact}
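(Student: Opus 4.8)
The statement is essentially a linear-algebra fact, and I would prove it by analyzing the rank of the coefficient matrix of the interpolation system. The plan is to collect the $d+1$ interpolation equations $\bm{a}^T \bm{x}_i + b = z_i$ into the matrix form $A \bm{y} = \bm{z}$, where $\bm{y} = (\bm{a}, b) \in \mathbb{R}^{d+1}$ and $A$ is the $(d+1) \times (d+1)$ matrix whose $i$-th row is $(\bm{x}_i^T, 1)$. Existence and uniqueness of the interpolant are then exactly equivalent to the invertibility of $A$, so the whole statement reduces to relating $\det A$ to the general-position hypothesis.

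The central step is to show that $A$ is invertible precisely because the $\bm{x}_i$ are in general position. To see this, I would subtract the first row of $A$ from each of the remaining rows, an operation that preserves the determinant. The resulting matrix has first row $(\bm{x}_1^T, 1)$ and $i$-th row $((\bm{x}_i - \bm{x}_1)^T, 0)$ for $i \geq 2$. Expanding along the last column shows that $\det A = \pm \det M$, where $M$ is the $d \times d$ matrix whose rows are the difference vectors $(\bm{x}_i - \bm{x}_1)^T$, $i = 2, \ldots, d+1$. By definition, the points $\bm{x}_1, \ldots, \bm{x}_{d+1}$ are affinely independent (the general-position hypothesis) if and only if these $d$ difference vectors are linearly independent, i.e. if and only if $\det M \neq 0$. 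Hence $A$ is invertible, and $A \bm{y} = \bm{z}$ has exactly one solution for every right-hand side $\bm{z}$, yielding the unique affine interpolant.

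For the two remaining cases I would reuse the same rank analysis. With $n < d+1$ points, $A$ becomes an $n \times (d+1)$ matrix; its rows $(\bm{x}_i^T, 1)$ are linearly independent because any relation $\sum_i c_i (\bm{x}_i, 1) = \bm{0}$ forces both $\sum_i c_i \bm{x}_i = \bm{0}$ and $\sum_i c_i = 0$, which is an affine dependence among the $\bm{x}_i$ and therefore must be trivial, since any subset of at most $d+1$ points is affinely independent under general position. Thus $A$ has full row rank, the system is consistent for every $\bm{z}$, and its solution set is an affine subspace of dimension $(d+1) - n > 0$, giving infinitely many interpolants. With $n > d+1$ points, $A$ is $n \times (d+1)$ and contains a $(d+1) \times (d+1)$ invertible submatrix formed from any $d+1$ of its rows (by the main case), so $A$ has full column rank $d+1$; the system therefore has at most one solution, and is consistent or not according to whether the extra points lie on the affine function determined by the first $d+1$, giving either one or no interpolant.

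I expect no serious obstacle here: the only genuine content is the determinant-to-affine-independence equivalence in the second paragraph, which is the classical characterization of points in general position, and everything else is routine rank--nullity bookkeeping.
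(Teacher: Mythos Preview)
Your proposal is correct and follows the same underlying approach as the paper: both reduce existence and uniqueness of the affine interpolant to the solvability of the linear system $\bm{a}^T \bm{x}_i + b = z_i$. The paper, however, treats this as a well-known fact and gives no separate proof; its entire justification is the sentence embedded in the statement (``the system \ldots\ is exactly determined and has a unique solution due to the general position of the $\bm{x}_i$''). Your write-up simply makes explicit what the paper leaves implicit, supplying the determinant computation that links invertibility of the coefficient matrix to affine independence and carrying out the rank analysis for the underdetermined and overdetermined cases.
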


\begin{definition}
Let $f: D \rightarrow \mathbb{R}$ be a \gls{cpwl} interpolation of $S$, with $|S|=N \geq d+1$. We say that an affine piece of $f$ is underdetermined, exactly determined, or overdetermined with respect to $S$ if it interpolates less than, exactly, or more than $d+1$ points of $S$, respectively.
\end{definition}

\begin{definition}
\label{def_well_behaved}
We say that $f: D \rightarrow \mathbb{R}$ is a well-behaved \gls{cpwl} interpolation of $S$ when $f$ is a \gls{cpwl} interpolation of $S$ and each affine piece of $f$ is exactly determined or overdetermined, i.e., each affine piece interpolates at least $d+1$ points of $S$. Additionally, let $f: D \rightarrow \mathbb{R}$ be an $\varepsilon$-approximation of $S$ and $e_i = f(\bm{x}_{i}) - z_i$. We say that $f$ is a well-behaved \gls{cpwl} $\varepsilon$-approximation of $S$ when $f$ is a well-behaved \gls{cpwl} interpolation of $(\bm{x}_{i},z_i+e_i)_{i=1,...,N}$.
\end{definition}
\begin{definition}
\label{def_well_behaved_version}
Let $f$ and $g$ be two \gls{cpwl} interpolations of $S$ composed of the set of affine pieces \{$f_{k}: D_{k} \rightarrow \mathbb{R}, k=1,...,P$\} and \{$g_{k}: D'_{k} \rightarrow \mathbb{R}, k=1,...,P$\}, respectively. We say that $g$ is a well-behaved version of the \gls{cpwl} interpolation $f$ with respect to $S$ if $g$ is a well-behaved \gls{cpwl} interpolation of $S$ and $D_{k} \cap S_{\mathbb{R}^{d}} \subseteq D'_{k} \cap S_{\mathbb{R}^{d}}, \forall k \in \{1,...,P\}$. Additionally, let $f$ and $g$ be two \gls{cpwl} $\varepsilon$-approximations of $S$ and $e_i = f(\bm{x}_{i}) - z_i$. We say that $g$ is a well-behaved version of the \gls{cpwl} $\varepsilon$-approximation $f$ with respect to $S$ if $g$ is a well-behaved version of the \gls{cpwl} interpolation $f$ with respect to $(\bm{x}_{i},z_i+e_i)_{i=1,...,N}$. 
\end{definition}

\begin{remark}
    If $g$ is a well-behaved version of the \gls{cpwl} $\varepsilon$-approximation $f$ with respect to $S$, then the $\varepsilon$-approximations $f$ and $g$ are equivalent according to \cref{definition_equivalent}.
\end{remark}

In other words, a well-behaved version of a \gls{cpwl} interpolation $f$ of $S$ is a transformation of $f$ where each affine piece has been adjusted, or ``tilted'', to interpolate at least $d+1$ points of $S$, including the points it was originally interpolating (before being tilted). An illustration for $d=1$ is provided in \cref{fig_wellbehaved}. In the following sections, we show that well-behaved \gls{cpwl} interpolations have interesting properties that can be used to tighten the \gls{milp} formulation. To the best of the authors' knowledge, this is the first time this class of \gls{cpwl} interpolations is introduced. 
We prove below that for any \gls{cpwl} interpolation, a well-behaved version always exists. 

\begin{figure}[h!]
    \centering
    \includegraphics[width=0.6\textwidth]{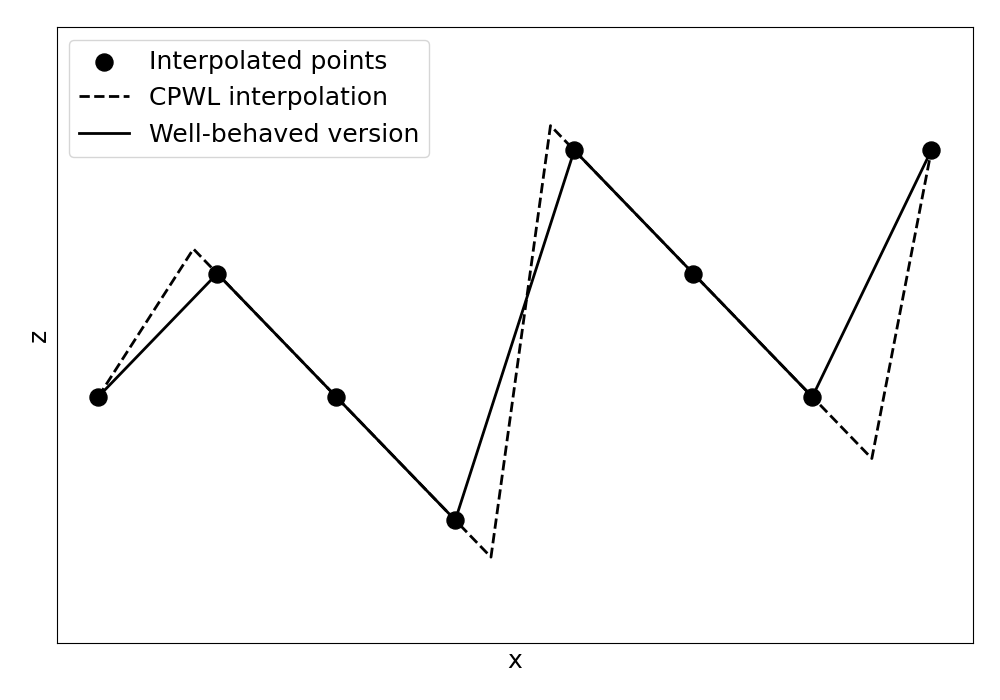}
    \caption{Illustration of a CPWL interpolation and its well-behaved version for $d=1$.}
    \label{fig_wellbehaved}
\end{figure}


\begin{remark}
    If $f$ is well-behaved, then $f$ is a well-behaved version of itself.
\end{remark}

\begin{lemma}
\label{lemma_neighbor_inequality}
    Let $S$ be a set of $N$ points in $\mathbb{R}^{d+1}$, with $N \geq d+1$. Let $f: D \rightarrow \mathbb{R}$ be a \gls{cpwl} interpolation of $S$. Let $f_1: D_1 \rightarrow \mathbb{R}$ be one of the affine pieces of $f$, and let $f_2: D_2 \rightarrow \mathbb{R}$ be a neighboring affine piece of $f_1$, i.e., an affine piece of $f$ sharing a common domain boundary with $f_1$. Let $\widetilde{f_1}$ be the affine extension of $f_1$ to the domain $D$. Then,
    $$\exists c \in \{-1,1\}: \quad \forall \bm{x} \in D_2, \quad c \left( \widetilde{f_1}(\bm{x}) - f_2(\bm{x}) \right) \leq 0$$
\end{lemma}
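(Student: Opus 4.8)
The plan is to reduce the whole statement to a single claim about the sign of an affine function. Define $g := \widetilde{f_1} - f_2$ on $D$. Since $\widetilde{f_1}$ and $f_2$ are both affine, $g$ is affine. The conclusion $\exists c \in \{-1,1\}: \forall \bm{x} \in D_2,\ c\,g(\bm{x}) \leq 0$ is exactly the assertion that $g$ does not change sign on $D_2$: either $g \leq 0$ everywhere on $D_2$ (take $c=1$), or $g \geq 0$ everywhere on $D_2$ (take $c=-1$). So the lemma amounts to proving that the affine function $g$ keeps a constant sign over the domain $D_2$.

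First I would exploit the continuity of $f$ to locate the zero set of $g$. On the shared domain boundary $B := D_1 \cap D_2$, both $f|_{D_1}=f_1$ and $f|_{D_2}=f_2$ must agree with the continuous function $f$, so $f_1(\bm{x}) = f_2(\bm{x})$ for all $\bm{x} \in B$. Because $\widetilde{f_1}$ is the affine extension of $f_1$, it coincides with $f_1$ on $D_1 \supseteq B$, whence $\widetilde{f_1} = f_2$ and $g = 0$ on $B$. Next I would upgrade ``$g$ vanishes on $B$'' to ``$g$ vanishes on a hyperplane.'' Since $f_1$ and $f_2$ are neighboring pieces, their shared boundary $B$ is $(d-1)$-dimensional, so it contains $d$ affinely independent points and its affine hull is a hyperplane $H$. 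An affine function vanishing at $d$ affinely independent points vanishes on their affine hull, hence $g=0$ on all of $H$. Consequently either $g \equiv 0$ on $D$ (in which case any $c$ works), or $g$ is a nonzero affine function whose zero set is exactly $H$, so that $g$ is strictly positive on one open half-space bounded by $H$ and strictly negative on the other.

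Finally I would place $D_2$ relative to $H$. Here I rely on the polyhedral structure guaranteed by \cref{fact2}: each affine domain of $f$ is the intersection of an affine domain of $f^+$ and one of $f^-$, hence a convex polytope. Since $B \subseteq H$ is a facet of the convex set $D_2$, the hyperplane $H$ supports $D_2$, so $D_2$ lies entirely within one of the two closed half-spaces bounded by $H$. On that half-space $g$ has constant (weak) sign, and the required value of $c$ follows immediately.

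The main obstacle I anticipate lies in the geometric bookkeeping of the two middle steps: rigorously justifying that the common boundary $B$ is genuinely $(d-1)$-dimensional, so that its affine hull is a full hyperplane on which the affine $g$ is forced to vanish, and that the convex domain $D_2$ sits on a single side of that hyperplane. Both facts hinge on the convex, polyhedral structure of the affine domains supplied by the \gls{dc} representation (\cref{fact2}); once that structure is in place, the continuity argument and the affine-algebra manipulations are routine.
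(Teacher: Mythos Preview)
Your argument is correct and is a genuinely different route from the paper's. You reduce the claim to showing that the affine function $g=\widetilde{f_1}-\widetilde{f_2}$ has constant sign on the convex polytope $D_2$, using that $g$ vanishes on the shared facet $B$, that the affine hull $H$ of $B$ is a hyperplane, and that $H$ supports $D_2$. The paper instead exploits the \gls{dc} structure algebraically: by \cref{fact2} a shared facet between $D_1$ and $D_2$ corresponds to a change in exactly one of the two max components, so (in Case~1) one can write $\widetilde{f_1}=f_1^+-f_0^-$ and $f_2=f_2^+-f_0^-$ with the \emph{same} $f_0^-$, and then $f_2^+(\bm{x})=\max_j f_j^+(\bm{x})\geq f_1^+(\bm{x})$ on $D_2$ immediately gives $\widetilde{f_1}\leq f_2$; Case~2 is symmetric. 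The paper's approach is shorter and even identifies which sign $c$ occurs (it is $+1$ when the crossed boundary belongs to $f^+$ and $-1$ when it belongs to $f^-$), entirely bypassing the facet and supporting-hyperplane bookkeeping you flag as the main obstacle. Your approach, on the other hand, is more geometric and would apply verbatim to any CPWL function whose pieces are convex polytopes with pairwise disjoint interiors, not only to those arising from a \gls{dc} representation; you only use \cref{fact2} to obtain convexity of the $D_k$, not the explicit $\max$ formula.
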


\begin{proof}[Proof of \cref{lemma_neighbor_inequality}] Let $\bm{x} \in D_2$. According to \cref{fact2}, a shared domain boundary between $f_1$ and $f_2$ is either a shared domain boundary between two affine pieces of $f^+$ or between two pieces of $f^-$. \underline{Case 1}: The shared domain boundary is between two affine pieces of $f^+$. Then, using \cref{fact2}, we can write $\widetilde{f_1}(\bm{x})=f_1^+(\bm{x})-f_0^-(\bm{x})$ and $f_2(\bm{x})=f_2^+(\bm{x})-f_0^-(\bm{x})$. Since $f_2^+(\bm{x}) = \max_{j \in \{1,...,P^+\}}f_j^+(\bm{x}) \geq f_1^+(\bm{x})$ we therefore have $\widetilde{f_1}(\bm{x}) \leq f_2(\bm{x})$. Thus $\left( \widetilde{f_1}(\bm{x}) - f_2(\bm{x}) \right) \leq 0$. \underline{Case 2}: The shared domain boundary is between two affine pieces of $f^-$. We can write $\widetilde{f_1}(\bm{x})=f_0^+(\bm{x})-f_1^-(\bm{x})$ and $f_2(\bm{x})=f_0^+(\bm{x})-f_2^-(\bm{x})$. Since $f_2^-(\bm{x}) \geq f_1^-(\bm{x})$, we therefore have $\widetilde{f_1}(\bm{x}) \geq f_2(\bm{x})$. Thus, $(-1)\left( \widetilde{f_1}(\bm{x}) - f_2(\bm{x}) \right) \leq 0$.
\end{proof}

\begin{fact}
\label{active_constraints_polyhedron}
    Let $\bm{A} \in \mathbb{R}^{m \times n}$ and $\bm{b} \in  \mathbb{R}^m$. Assume that the $m$ rows of $\bm{A}$ are linearly independent and the polyhedron $U = \{ \bm{x} \in \mathbb{R}^n : \bm{A} \bm{x} \leq \bm{b}\}$ is non-empty. Then, there exists a point $\bm{x}^* \in \mathbb{R}^n$ such that $\bm{A}_i \bm{x}^* = b_i$ for $\min(n,m)$ rows $\bm{A}_i$ of $\bm{A}$. 
    
    The $\min(n,m)$ constraints ``$\bm{A}_i \bm{x}^* = b_i$'' are also known as the active constraints of the solution $\bm{x}^*$. The point $\bm{x}^*$ lies on the boundary of $U$ and, if $m \geq n$, $\bm{x}^*$ is a vertex of $U$ and a basic feasible solution of $U$. This is a well-established fact from LP theory, as described in \cite{bertsimas_introduction_1997}.
\end{fact}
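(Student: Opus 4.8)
The plan is to prove \cref{active_constraints_polyhedron} by first squeezing the only substantive content out of the hypotheses and then reducing to an elementary linear-algebra statement. \textbf{Step 1 (collapsing $\min(n,m)$).} The $m$ rows of $\bm{A}$ are vectors of $\mathbb{R}^n$, and they are assumed linearly independent, so necessarily $m \leq n$, giving $\operatorname{rank}(\bm{A}) = m$ and $\min(n,m) = m$. Hence the assertion ``$\bm{A}_i \bm{x}^* = b_i$ for $\min(n,m)$ rows'' is equivalent to the sharper claim that \emph{all} $m$ constraints can be made active at once, i.e. that the linear system $\bm{A}\bm{x}^* = \bm{b}$ has a solution lying in $U$.

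\textbf{Step 2 (the direct argument I would use).} Existence of such an $\bm{x}^*$ follows immediately from full row rank: since $\operatorname{rank}(\bm{A}) = m$, the map $\bm{x} \mapsto \bm{A}\bm{x}$ from $\mathbb{R}^n$ onto $\mathbb{R}^m$ is surjective, so $\bm{A}\bm{x} = \bm{b}$ admits a solution $\bm{x}^*$ for every $\bm{b}$. That $\bm{x}^*$ satisfies $\bm{A}\bm{x}^* = \bm{b} \leq \bm{b}$, hence $\bm{x}^* \in U$, and all $m = \min(n,m)$ rows are active, which is exactly the conclusion. An incidental observation worth recording is that this makes the non-emptiness hypothesis on $U$ redundant once the rows are independent; I would nonetheless retain it, since in the intended application it simply documents that the constraint system is consistent. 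For the trailing remarks, at least one active equality $\bm{A}_i \bm{x}^* = b_i$ with $\bm{A}_i \neq \bm{0}$ forces $\bm{x}^*$ onto the boundary of $U$ (any direction $\bm{d}$ with $\bm{A}_i \bm{d} > 0$ exits $U$), and when $m = n$ the matrix $\bm{A}$ is square and invertible, so $\bm{x}^* = \bm{A}^{-1}\bm{b}$ is the unique point with $n$ independent active constraints, i.e. a vertex and a basic feasible solution.

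\textbf{Step 3 (the textbook route, and where the real work sits).} If one prefers the incremental construction actually carried out in LP texts such as the cited reference, I would instead start from any feasible $\bm{x}_0 \in U$ and, while fewer than $m$ independent constraints are active, pick a nonzero direction $\bm{d}$ orthogonal to the currently active rows with $\bm{A}_i \bm{d} \neq 0$ for some inactive row $i$, then perform a ratio test along $\pm\bm{d}$ to make a new constraint tight. The main obstacle in this version is the bookkeeping that the direct argument sidesteps entirely: one must verify that such a $\bm{d}$ always exists (which is precisely where linear independence of the rows is used, since otherwise the inactive rows could all vanish on the orthogonal complement of the active ones) and that the row becoming tight in the ratio test is independent of the active set, so that the count of independent active constraints strictly increases and the procedure terminates at $m$. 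Because the present hypotheses make this machinery unnecessary, I expect the surjectivity argument of Step 2 to be the proof of record, with the genuinely load-bearing point being the elementary reduction of Step 1.
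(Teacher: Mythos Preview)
Your proof is correct. The paper does not actually prove this Fact; it merely states it and defers to the cited LP textbook, so there is no paper argument to compare against beyond that citation. Your Step~1 observation---that linear independence of the $m$ rows in $\mathbb{R}^n$ forces $m \leq n$, collapsing $\min(n,m)$ to $m$---is sharp, and your Step~2 surjectivity argument then dispatches the claim cleanly, including the redundancy of the non-emptiness hypothesis.

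One point worth flagging: your Step~1 also shows that the closing remark ``if $m \geq n$'' in the Fact is vacuous except when $m = n$. When the paper later invokes this Fact in the proof of \cref{lemma_add_points}, the polyhedron there has $n$ equality and $m$ inequality constraints on $d+1$ variables, and $n+m$ may well exceed $d+1$, so the full linear-independence hypothesis cannot hold as written. The paper is tacitly relying on the more general LP result (a feasible polyhedron with rows spanning $\mathbb{R}^n$ has a point with $n$ independent active constraints), which is precisely the incremental construction you outline in Step~3. So your direct argument proves the Fact exactly as stated, while your Step~3 sketch is the version actually needed downstream.
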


\begin{lemma}
\label{lemma_add_points}
 Let $S$ be a set of $N$ points in $\mathbb{R}^{d+1}$, with $N \geq d+1$. Let $f: D \rightarrow \mathbb{R}$ be a \gls{cpwl} interpolation of $S$ that is not well-behaved, i.e., at least one of the affine pieces of $f$ is underdetermined. Let $f_0: D_0 \rightarrow \mathbb{R}$ represent an underdetermined affine piece of $f$ that interpolates $n$ points of $S$ ($n \leq d$). Let $f_j: D_j \rightarrow \mathbb{R}, j \in \{1,...,J\}$ be the $J$ neighboring pieces of $f_0$, i.e., the affine pieces that share a domain boundary with $f_0$. Let $\widetilde{D_0} = \bigcup_{j=0}^JD_j$. Assume there is a total of $m$ points in $S$ whose projections lie in $\widetilde{D_0} \setminus D_0$ (interpolated by the J neighboring pieces but not by $f_0$). Then, there exists an alternative \gls{cpwl} interpolation of $S$ in which $f_0$ is adjusted to interpolate $\min(d+1,n+m)$ points of $S$ without impacting the points interpolated by the neighboring pieces. In other words, we can construct a new \gls{cpwl} interpolation of $S$ defined over $\widetilde{D_0}$ and represented by a set of affine functions $\{f'_j: D'_j \rightarrow \mathbb{R}, j \in \{0,1,...,J\}\}$, such that: (1) The functions ${f'_j}$ and $f_j$ are equal on their shared domain, i.e., ${f'_j}(\bm{x}) = {f_j}(\bm{x}), \forall \bm{x} \in D'_j \cap D_j, j \geq 1$, (2) The points that are interpolated by $f_j$ and ${f'_j}$ are the same , i.e., $S_{\mathbb{R}^{d}} \cap D'_j = S_{\mathbb{R}^{d}} \cap D_j, j \geq 1$, and (3) The function $f'_0$ interpolates $\min(d+1,n+m)$ points of $S$, which include the $n$ points interpolated by $f_0$, i.e., $|S_{\mathbb{R}^{d}} \cap D'_0| = \min(d+1,n+m)$ and $S_{\mathbb{R}^{d}} \cap D_0 \subseteq S_{\mathbb{R}^{d}} \cap D'_0$.
\end{lemma}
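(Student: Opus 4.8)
The plan is to prove \cref{lemma_add_points} in two stages: an algebraic stage that produces the tilted affine function $f'_0$ interpolating the right number of points, followed by a geometric stage that rebuilds the affine domains around it. For the algebraic stage I would identify a candidate affine function with its coefficient vector $\bm{w} = (\bm{a},b) \in \mathbb{R}^{d+1}$ and write, for each data point, the lifted vector $\hat{\bm{x}}_i = (\bm{x}_i,1)$, so that the candidate value at $\bm{x}_i$ equals $\bm{w}^\top \hat{\bm{x}}_i$. Let $I_0$ be the set of $n$ points interpolated by $f_0$. I would restrict attention to the affine subspace $V = \{\bm{w} : \bm{w}^\top \hat{\bm{x}}_i = z_i, \ i \in I_0\}$, which forces $f'_0$ to keep interpolating the original $n$ points; by general position the $n$ lifted vectors are linearly independent, so $\dim V = d+1-n$. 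On each neighbor $f_j$, \cref{lemma_neighbor_inequality} supplies a sign $c_j \in \{-1,1\}$ with $c_j(\widetilde{f_0}(\bm{x})-f_j(\bm{x})) \le 0$ on $D_j$; for each of the $m$ points $\bm{x}_i \in \widetilde{D_0}\setminus D_0$, lying in some neighbor domain $D_{j(i)}$ with $f_{j(i)}(\bm{x}_i)=z_i$, I would impose the inequality $c_{j(i)}(\bm{w}^\top \hat{\bm{x}}_i - z_i) \le 0$. These $m$ inequalities cut out a polyhedron $U \subseteq V$, which is nonempty because the coefficients of $f_0$ lie in it (the equalities hold by definition of $I_0$, the inequalities by \cref{lemma_neighbor_inequality}).

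The heart of the argument is then an active-set count. Expressed in coordinates on $V \cong \mathbb{R}^{d+1-n}$, the normal of each inequality is the projection of $\hat{\bm{x}}_i$ onto the direction space of $V$, and general position guarantees that any $d+1-n$ of these projected normals are linearly independent, since a dependence among them would yield a linear dependence among at most $d+1$ lifted data points. I can therefore invoke \cref{active_constraints_polyhedron}: if $m \le d+1-n$ then all $m$ normals are independent and there is a point of $U$ at which all $m$ inequalities are active; if $m > d+1-n$ the normals span the direction space, so $U$ contains no line and hence possesses a vertex with $d+1-n$ active inequalities. In both cases I obtain $\bm{w}^\ast \in U$ with exactly $\min(d+1-n,m)$ active inequalities, and the corresponding affine function $f'_0$ interpolates the $n$ points of $I_0$ together with the points of the active inequalities, i.e. $\min(d+1,n+m)$ points in total, including the original $n$.

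For the geometric stage I would keep every neighbor's affine function unchanged, setting $f'_j := f_j$ so that property (1) holds automatically, and rebuild the domains from the crossing hyperplanes $\{\widetilde{f'_0} = f_j\}$. Writing $N^+$ and $N^-$ for the neighbors with $c_j=+1$ and $c_j=-1$, I would define $D'_0 = \{\bm{x} \in \widetilde{D_0} : \widetilde{f'_0}(\bm{x}) \ge f_j(\bm{x}) \ \forall j \in N^+, \ \widetilde{f'_0}(\bm{x}) \le f_j(\bm{x}) \ \forall j \in N^-\}$ and carve the complementary regions to form the $D'_j$. The defining inequalities of $U$ are precisely what prevents $f'_0$ from overshooting: every neighbor point remains on its own side of the relevant crossing hyperplane, so no neighbor loses a point (property (2)), while $f'_0$ acquires a neighbor point exactly when the corresponding inequality is active. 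A symmetric check shows the $n$ points of $I_0$ satisfy the inequalities defining $D'_0$ and are therefore retained, and that the only points of $S_{\mathbb{R}^d}$ inside $\widetilde{D_0}$ are these $n$ plus the $m$ candidates, giving the exact count $|S_{\mathbb{R}^d}\cap D'_0| = \min(d+1,n+m)$ demanded by property (3).

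I expect the main obstacle to lie in the geometric stage rather than in the active-set count: one must show that the half-space reconstruction genuinely tiles $\widetilde{D_0}$ by affine domains with disjoint interiors, that the induced selection is continuous — the delicate situation being a codimension-two face where a neighbor in $N^+$ meets a neighbor in $N^-$ — and that the rebuilt function glues continuously with the unaltered part of $f$ outside $\widetilde{D_0}$. I would resolve these points by exploiting the sign pattern of \cref{lemma_neighbor_inequality} together with \cref{fact2}, which ensure that $f^+$-type and $f^-$-type neighbors sit on opposite sides of $\widetilde{f'_0}$ so that their crossing hyperplanes remain mutually consistent, exactly as in the original tiling of $f$; since $f'_0$ only expands toward the interior shared boundaries and stays on the $f_0$-side of each neighbor, the outer boundary of $\widetilde{D_0}$ is left untouched and continuity with the exterior is preserved.
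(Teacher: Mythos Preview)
Your approach is essentially identical to the paper's: construct the polyhedron $U$ of candidate coefficients from the $n$ interpolation equalities and the $m$ sign-constrained inequalities supplied by \cref{lemma_neighbor_inequality}, invoke \cref{active_constraints_polyhedron} to obtain a point with $\min(d+1,n+m)$ active constraints, and then rebuild the domains from the crossing hyperplanes $\{\widetilde{f'_0}=f_j\}$. If anything, you are more careful than the paper in two places --- you explicitly use general position to justify the linear-independence hypothesis of \cref{active_constraints_polyhedron} (the paper invokes it directly), and you spell out the domain reconstruction and its continuity checks, which the paper dispatches in a single sentence.
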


\begin{proof}[Proof of \cref{lemma_add_points}]
    Let $\bm{a}_0$ and $b_0$ be the linear coefficients and bias terms of $f_0$, \textit{i.e.}, $f_0(\bm{x}) = \bm{a_0}^T \bm{x} + b_0, \forall \bm{x} \in D_0$. Let $(\bm{x}_i,z_i)_{i=1,...,n}$ denote the $n$ points of $S$ whose projections on $\mathbb{R}^d$ lie in $D_0$, and $(\bm{x}'_i,z'_i)_{i=1,...,m}$ denote the $m$ points of $S$ whose projections on $\mathbb{R}^d$ lie in $\widetilde{D_0} \setminus D_0$. Let $p(i)$ denote the smallest integer such that $\bm{x}'_i \in D_{p(i)}$.
    According to \cref{lemma_neighbor_inequality}, $ \exists (c_j)_{j=1,...,J} \in \{-1,1\}^J: c_{p(i)}\left( \widetilde{f_0}(\bm{x}'_i) - f_{p(i)}(\bm{x}'_i) \right) \leq 0, \forall i \in \{1,...,m\}$. Since $\widetilde{f_0}(\bm{x}'_i) = \bm{a_0}^T \bm{x}'_i + b_0$ and $f_{p(i)}(\bm{x}'_i) = z'_i$, this can be expressed as: $c_{p(i)}\left( \bm{a_0}^T \bm{x}'_i + b_0 - z'_i \right) \leq 0, \forall i \in \{1,...,m\}$. In other words, $(\bm{a}_0,b_0)$ is a point of the polyhedron $U$ defined by:
    $$(\bm{a},b) \in \mathbb{R}^{d+1}:$$
    $$\bm{a}^T \bm{x}_i + b - z_i = 0, \quad i \in \{1,...,n\}$$
    $$c_{p(i)}\left( \bm{a}^T \bm{x}'_i + b - z'_i \right) \leq 0, \quad i \in \{1,...,m\}$$
    $U$ is not empty because $(\bm{a}_0,b_0) \in U$. Therefore, according to \cref{active_constraints_polyhedron}, there exists a solution $(\bm{a}^*,b^*) \in U$ with $\min(d+1,n+m)$ active constraints. Note that the constraints ``$\bm{a}^{*T} \bm{x}_i + b^* - z_i = 0$'' are already active, which means that there must be $\min(d+1,n+m)-n$ additional active constraints of type ``$c_{p(i)}\left( \bm{a}^{*T} \bm{x}'_i + b^* - z'_i \right) = 0$''. We define $f'_0$ as the affine function defined by $f'_0(\bm{x}) = \bm{a}^{*T} \bm{x} + b^*$. We define the domain of $f'_0$ as the domain delimited by the $J$ boundaries $\{\bm{x} \in D: f'_0(\bm{x}) = \widetilde{f_j}(\bm{x})\}$. We define $f'_j$ as the $J$ affine pieces $f_j$ for which the domains were updated according to the $J$ boundaries of $f'_0$. The set of affine functions $\{f'_j: D'_j \rightarrow \mathbb{R}, j \in \{0,1,...,J\}\}$ constitutes a valid \gls{cpwl} interpolation of $S$ over $\widetilde{D_0}$, with $f'_0$ interpolating $\min(d+1,n+m)$ points of $S$, including the original $n$ points interpolated by $f_0$. The points of $S$ interpolated by $f'_j$ are the same as the points interpolated by $f_j$, $\forall j \in \{1,...,J\}$.
\end{proof}

\begin{theorem}
    \label{theorem_well_behaved}
    Let $S$ be a set of $N$ points in $\mathbb{R}^{d+1}$, where $N \geq d+1$. Any \gls{cpwl} interpolation of $S$ has a well-behaved version.
\end{theorem}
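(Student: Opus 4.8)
The plan is to prove the theorem by repeatedly applying \cref{lemma_add_points} to the underdetermined pieces of a given interpolation, using a bounded integer potential to guarantee that the process terminates at a well-behaved interpolation. Concretely, starting from a \gls{cpwl} interpolation $f$ with affine pieces $\{f_k : D_k \to \mathbb{R}\}_{k=1}^P$, I would track the total incidence count
$$\Phi(f) = \sum_{k=1}^{P} |S_{\mathbb{R}^d} \cap D_k|.$$
Since each piece interpolates at most $N$ points, $\Phi(f) \le NP$ is bounded above, and since $f$ interpolates $S$ we always have $\Phi(f) \ge N$. The strategy is: if $f$ is well-behaved we are done (by the earlier remark it is a well-behaved version of itself); otherwise I would locate a suitable underdetermined piece, apply \cref{lemma_add_points} to it, and argue that $\Phi$ strictly increases while $P$ stays fixed. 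A strictly increasing, bounded, integer-valued quantity can change only finitely often, so the iteration halts, and it can halt only at a well-behaved interpolation.

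For the iteration step I need an underdetermined piece to which \cref{lemma_add_points} makes genuine progress, i.e. one with $m \ge 1$ (a neighbor interpolating a point that $f_0$ does not). This is the crux, because applying the lemma to a piece with $m = 0$ yields $\min(d+1,n+m) = n$ and changes nothing. I would therefore prove the key sub-claim: \emph{if $f$ is not well-behaved, at least one underdetermined piece has $m \ge 1$.} Arguing by contradiction, suppose every underdetermined piece has $m = 0$. If an underdetermined piece $f_0$ has $m = 0$, then every point of $S$ lying in $\widetilde{D_0}$ already lies in $D_0$; since each neighbor's domain is contained in $\widetilde{D_0}$, every neighbor of $f_0$ interpolates only points that $f_0$ interpolates, hence interpolates at most $n \le d$ points and is itself underdetermined. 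Applying the same reasoning to a neighbor (also $m = 0$) shows that two adjacent underdetermined pieces interpolate \emph{exactly the same} set of data points. Thus no underdetermined piece is adjacent to an exactly- or overdetermined one; by connectivity of the piece-adjacency structure of the cover of $D$, all pieces would then be underdetermined and, propagating the equality of interpolated sets along adjacencies, all pieces would interpolate one common set $T$ with $|T| \le d$. But every data point is interpolated by some piece, forcing $S_{\mathbb{R}^d} = T$ and hence $N \le d$, contradicting $N \ge d+1$. This establishes the sub-claim.

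With the sub-claim in hand, each iteration picks such a piece $f_0$ (interpolating $n$ points, with $m \ge 1$) and replaces $f_0$ together with its neighbors by the pieces produced in \cref{lemma_add_points}. By property (3) of that lemma, $f_0$ now interpolates $\min(d+1,n+m) > n$ points and still interpolates its original points, so its incidence count strictly increases; by property (2) the neighbors keep exactly their previous interpolated points; and the remaining pieces, being untouched, are unchanged. Hence $\Phi$ strictly increases by at least one while $P$ is preserved. Because $\Phi$ is bounded by $NP$, only finitely many iterations are possible, so the process terminates at an interpolation $g$ with no underdetermined piece, i.e. a well-behaved interpolation.

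Finally I would verify that the terminal $g$ is a well-behaved \emph{version} of the original $f$ in the sense of \cref{def_well_behaved_version}. Since $P$ never changes and each piece retains its identity through the iterations, it suffices to check that no piece ever loses an interpolated data point: at every step the modified piece gains points while keeping the old ones, and all other pieces keep theirs, so each piece's set of interpolated points is nondecreasing over the whole run. Therefore $D_k \cap S_{\mathbb{R}^d} \subseteq D_k' \cap S_{\mathbb{R}^d}$ for every $k$, which is exactly the containment required by \cref{def_well_behaved_version}. I expect the main obstacle to be the sub-claim---in particular making the adjacency/connectivity propagation rigorous under the paper's permissive \gls{cpwl} definition (empty, coincident, or overlapping domains)---whereas the termination bookkeeping and the verification of the ``version'' containment are routine.
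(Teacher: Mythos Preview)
Your proposal is correct and follows essentially the same iterative scheme as the paper's proof: repeatedly apply \cref{lemma_add_points} to an underdetermined piece that has a neighbor interpolating a new point, and stop when no underdetermined piece remains. Your treatment is in fact more careful than the paper's---your potential $\Phi$ makes termination explicit where the paper just invokes finiteness of $S$, and your connectivity argument spells out the existence of a suitable $f_0$ that the paper asserts in one line (``$f_0$ exists because $n \le d$ and $d+1 \le N$''); the concern you flag about empty or coincident domains is real but is equally glossed over in the paper.
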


\begin{proof}[Proof of \cref{theorem_well_behaved}]
    Let $f: D \rightarrow \mathbb{R}$ be a \gls{cpwl} interpolation of $S$. If each affine piece of $f$ interpolates at least $d+1$ points of $S$, then $f$ is a well-behaved version of itself. Otherwise, $f$ has at least one underdetermined affine piece, i.e., interpolating less than $d+1$ points. Let $f_0$ be an underdetermined affine piece of $f$ interpolating $n$ points of $S$ such that one of the neighboring pieces of $f_0$ interpolates at least one point of $S$ not interpolated by $f_0$. $f_0$ exists because $n \leq d$ and $d+1 \leq N$. According to \cref{lemma_add_points}, we can build an alternative \gls{cpwl} interpolation of $S$ in which $f_0$ has been adjusted to interpolate at least $n+1$ points of $S$ without affecting the points interpolated by its neighboring affine pieces. The process of finding and adjusting an underdetermined affine piece can be repeated until no underdetermined piece remains. Since the number of points in $S$ is finite, this procedure terminates after a finite number of steps. At the end of the process, every affine piece interpolates at least $d+1$ points of $S$, and the final \gls{cpwl} interpolation is a well-behaved version of $f$. 
\end{proof}

\begin{corollary}
\label{corollary_1}
    Any \gls{cpwl} $\varepsilon$-approximation of $S$ has a well-behaved version.
\end{corollary}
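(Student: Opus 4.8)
The plan is to reduce the $\varepsilon$-approximation case to the interpolation case, which is already resolved by \cref{theorem_well_behaved}. The key observation is that the definitions are arranged so that an $\varepsilon$-approximation is nothing other than an exact interpolation of a vertically shifted data set, and that a ``well-behaved version'' of an approximation is \emph{defined} precisely through the interpolation notion applied to that shifted set. So the corollary should follow by a definitional reduction rather than by any new argument.

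First, I would start from an arbitrary \gls{cpwl} $\varepsilon$-approximation $f$ of $S$ and record its residuals $e_i = f(\bm{x}_i) - z_i$. Because $f$ is an $\varepsilon$-approximation, $|e_i| \leq \varepsilon$ for all $i$. I then introduce the shifted data set $S' = (\bm{x}_i, z_i + e_i)_{i=1,\dots,N}$. By construction $f(\bm{x}_i) = z_i + e_i$, so $f$ is an exact \gls{cpwl} interpolation of $S'$. The projections $\bm{x}_i$ are unchanged by this vertical shift, so they remain in general position and $N \geq d+1$ still holds; hence the hypotheses of \cref{theorem_well_behaved} are satisfied for $S'$.

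Second, I would apply \cref{theorem_well_behaved} to $f$ regarded as an interpolation of $S'$. This yields a \gls{cpwl} function $g$ that is a well-behaved version of the interpolation $f$ with respect to $S'$. Unwinding \cref{def_well_behaved_version}: since $g$ is a well-behaved version of the \gls{cpwl} interpolation $f$ with respect to $(\bm{x}_i, z_i + e_i)_{i=1,\dots,N} = S'$, it is, by the very definition of a well-behaved version of an $\varepsilon$-approximation, a well-behaved version of the \gls{cpwl} $\varepsilon$-approximation $f$ with respect to $S$. It remains only to confirm that $g$ genuinely meets the tolerance: by the remark following \cref{def_well_behaved_version}, $f$ and $g$ are equivalent with respect to $S$, so $g(\bm{x}_i) = f(\bm{x}_i) = z_i + e_i$ and therefore $|g(\bm{x}_i) - z_i| = |e_i| \leq \varepsilon$, i.e. $g$ is itself an $\varepsilon$-approximation of $S$.

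I do not expect a genuine obstacle here: the entire mathematical content lives in \cref{theorem_well_behaved}, and the corollary is obtained by recasting the residuals as a shift of the target values. The only points requiring care are bookkeeping ones — verifying that the general-position and cardinality hypotheses survive the vertical shift (they do, since the shift acts only on the $z$-coordinate and leaves the $\bm{x}_i$ untouched), and checking that the resulting $g$ still respects the error bound, which is immediate from the equivalence of $f$ and $g$ at the data points.
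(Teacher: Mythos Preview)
Your proposal is correct and follows exactly the paper's approach: the paper's own proof simply states that the result ``directly follows from \cref{theorem_well_behaved} and \cref{def_well_behaved_version},'' and your argument is precisely the unpacking of that sentence via the residual shift $S' = (\bm{x}_i, z_i + e_i)$. Your extra verification that $g$ actually satisfies the $\varepsilon$-tolerance is not stated in the paper but is harmless and correct.
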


\begin{proof}[Proof of \cref{corollary_1}]
    The proof directly follows from \cref{theorem_well_behaved} and \cref{def_well_behaved_version}.
\end{proof}

\begin{remark}
    Note that the procedure of tilting the affine pieces of $f$ to construct a well-behaved version may results in two or more underdetermined pieces merging together. In other words, initially distinct pieces may end up interpolating the same subset of points and having the same expression and affine domain after converting $f$ to a well-behaved interpolation. In this case, the well-behaved version is still a valid \gls{cpwl} function owing to the flexibility of \cref{defCPWL}.
\end{remark}

\begin{remark}
    A well-behaved version of a \gls{cpwl} interpolation is not necessarily unique. That is, a \gls{cpwl} interpolation may have several well-behaved versions. This stems from the fact that the point with $\min(n,m)$ active constraints in \cref{active_constraints_polyhedron} is not necessarily unique.
\end{remark}

An important application of this theorem is that we can significantly reduce the set of \gls{cpwl} functions to be considered in $MILP1(Q)$ without impacting the feasibility of the problem. Specifically, if a \gls{cpwl} $\varepsilon$-approximation $f$ of $S$ exists, a well-behaved version $g$ of $f$ also exists. The two \gls{cpwl} solutions $f$ and $g$ have the same number of affine pieces and are equivalent with respect to $S$, i.e., $f(\bm{x}_{i}) = g(\bm{x}_{i}), \forall i \in \{1,...,N\}$. Therefore, we can eliminate \gls{cpwl} solutions that are not well-behaved from the feasible region of $MILP1(Q)$ since they are redundant solutions and may have excessively steep gradients due to their underdetermined affine pieces. In the next section, we will see how this theorem allows us to derive valid tight bounds and constraints for the \gls{milp} problem.

\begin{definition}
    Herein, we will refer to the set of \gls{cpwl} $\varepsilon$-approximations of $S$ as $CPWL(S,\varepsilon)$. The subset of $CPWL(S,\varepsilon)$ that can be represented using $P^+, P^-,$ affine pieces for $f^+, f^-$, respectively, is denoted $CPWL(S,\varepsilon,P^+,P^-)$. The set of well-behaved \gls{cpwl} $\varepsilon$-approximations of $S$ will be denoted as $CPWL^*(S,\varepsilon)$, and $CPWL^*(S,\varepsilon,P^+,P^-)$ when represented by $P^+, P^-$ affine pieces. Note that $CPWL^*(S,\varepsilon) \subset CPWL(S,\varepsilon)$. Additionally, note that the feasible region of the \gls{milp} problem $MILP1(Q)$ is given by $CPWL(S,\varepsilon,P^+,P^-)$.
\end{definition}

\section{Tightening the MILP formulation}
\label{tighteningsection}

This section introduces six strategies to tighten and enhance the formulation of $MILP1(Q)$. First, we demonstrate that we can fix one of the affine pieces of $f^-$ without affecting the set of \gls{cpwl} solutions. Next, we impose an ordering on the affine pieces of $f^+$ and $f^-$ that does not affect the set of \gls{cpwl} solutions. We can also impose that each affine piece of $f^+$ and $f^-$ contains at least $d+1$ points, which serves as a valid tightening of the set of feasible well-behaved \gls{cpwl} solutions. Alternatively, we may require that each affine piece of $f$ contains at least $d+1$ points, which requires the use of additional variables.  Finally, we identify tight values for the big-M parameters and establish tight bounds for the linear variables.

Additional strategies that leverage the convexity of the functions $f^+$ and $f^-$ and of their affine domains are included in \cref{section_other_strategies}. However, contrary to the six strategies presented here, their impact on the feasible region is unclear and they do not seem to have a significant impact on the solution time.

Herein, it is assumed that $N \geq d+1$.

\subsection{Fixing one affine piece}

\begin{fact}
\label{fact4}
    Let $\{f_k, k \in \{0,1,...,K\}\}$ be a set of functions. We have $\max_{k \in \{1,...,K\}}(f_k-f_0) = \max_{k \in \{1,...,K\}}(f_k) - f_0$. 
    This fact follows directly from the translation invariance of the max function, i.e., $\max(a-c,b-c) = \max(a,b) - c, \forall a,b,c \in \mathbb{R}$. 
\end{fact}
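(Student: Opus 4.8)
The plan is to reduce the asserted equality of functions to a pointwise identity about real numbers and then invoke the elementary translation property quoted in the statement. First I would fix an arbitrary point $\bm{x}$ in the common domain of the $f_k$ and abbreviate $a_k = f_k(\bm{x})$ for $k \in \{1,\dots,K\}$ and $c = f_0(\bm{x})$. The claim then reduces to the real-number identity $\max_{k \in \{1,\dots,K\}}(a_k - c) = \max_{k \in \{1,\dots,K\}}(a_k) - c$. Since $\bm{x}$ was arbitrary, verifying this identity at every point establishes the equality of the two functions, which is exactly what the fact asserts.

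Next I would prove the real-number identity. The most transparent route is to observe that the map $t \mapsto t - c$ is strictly increasing and hence order-preserving: it leaves unchanged the index $k^\star$ that achieves $\max_k a_k$ and shifts its value by exactly $-c$, so the maximum of the shifted family $(a_k - c)_k$ equals the shifted maximum $(\max_k a_k) - c$. Alternatively, one can argue by induction on $K$, using the base case $\max(a_1 - c) = a_1 - c$ together with the inductive step $\max_{k \le K}(a_k - c) = \max\bigl(\max_{k \le K-1}(a_k - c),\, a_K - c\bigr)$, where the two-argument property $\max(u - c, v - c) = \max(u,v) - c$ cited in the statement lets me pull the $-c$ outside at each stage.

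There is essentially no obstacle here; the result is a direct consequence of translation invariance of the maximum, precisely as the statement indicates. The only point worth a line of care is that $f_0$ is excluded from the index set of the maximum — the max runs over $k \in \{1,\dots,K\}$ rather than $\{0,\dots,K\}$ — so that at each point $\bm{x}$ the value $f_0(\bm{x})$ genuinely acts as a constant with respect to the maximization and can be factored out. This is exactly the structure exploited later when $f_0$ is taken to be the fixed affine piece of $f^-$.
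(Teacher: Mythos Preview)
Your proposal is correct and takes essentially the same approach as the paper: the paper simply states that the identity follows from the translation invariance $\max(a-c,b-c)=\max(a,b)-c$, and your pointwise reduction followed by either the monotone-shift observation or the two-argument induction is exactly how one unpacks that remark. Your note that $f_0$ is excluded from the index set so that it acts as a pointwise constant is the only subtlety, and you handle it correctly.
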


\begin{theorem}
\label{theorem_first_plane_zero}
    The following equation constitutes a valid tightening constraint of $MILP1(Q)$:
    \begin{equation}
    \label{first_plane_zero}
        \bm{a}_1^-= \bm{0}, \quad b_1^-=0
    \end{equation}
\end{theorem}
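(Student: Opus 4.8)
The plan is to exploit the non-uniqueness of the DC representation, which allows the same fitted function $f$ to be written as a difference of convex CPWL functions in many ways. Given an arbitrary feasible point of $MILP1(Q)$, I will construct a second feasible point with the \emph{same} objective value in which the first affine piece of $f^-$ is identically zero. Since this construction applies to every feasible solution, and in particular to an optimal one, the optimal objective cannot increase when \cref{first_plane_zero} is added; as the tightened feasible region is trivially contained in the original one, the optimal value is in fact preserved, so the constraint is a valid tightening.

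Concretely, I would start from a feasible solution described by affine pieces $f_j^+(\bm{x}) = {\bm{a}_j^+}^T\bm{x} + b_j^+$ and $f_k^-(\bm{x}) = {\bm{a}_k^-}^T\bm{x} + b_k^-$, together with binaries $\delta_{i,j}^c$ and errors $e_i$. Define the affine shift $g = f_1^-$ and set $\widetilde{f}_j^+ = f_j^+ - g$ for all $j$ and $\widetilde{f}_k^- = f_k^- - g$ for all $k$. Each shifted piece is still affine, and by \cref{fact4} the pointwise maxima transform as $\widetilde{f}^+ = f^+ - g$ and $\widetilde{f}^- = f^- - g$, so that $\widetilde{f} = \widetilde{f}^+ - \widetilde{f}^- = f^+ - f^- = f$. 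Thus the fitted function, and hence every $e_i$ and the objective, is unchanged, while the new first piece is $\widetilde{f}_1^- = f_1^- - g = 0$, i.e. $\bm{a}_1^- = \bm{0}$ and $b_1^- = 0$, exactly as required. Note also that a piece $f_1^-$ exists because $f^-$ is a convex CPWL function, so $P^- \geq 1$.

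The remaining step is to verify that the shifted data, keeping the \emph{same} binaries $\delta_{i,j}^c$ and errors $e_i$, still satisfies every constraint. The key observation I would emphasize is that subtracting the common term $g$ from $f^c$ and from each $f_j^c$ leaves the quantity $f^c(\bm{x}_i) - {\bm{a}_j^c}^T\bm{x}_i - b_j^c$ invariant. Consequently \cref{ConvexEquation0} and the big-M inequality \cref{ConvexEquation} hold verbatim for the shifted solution \emph{with the original parameters} $M_i^c$. The same invariance shows that the argmax sets $D_j^c = \{\bm{x} : f^c(\bm{x}) = f_j^c(\bm{x})\}$ are unchanged, so the indicators $\delta_{i,j}^c$ remain consistent and \cref{numberplaneperpoint} still holds. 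Constraints \cref{DCequation,FittingEquation,ErrorBound,DeltaBinary} are immediate, since $f$, the $e_i$, and the $\delta_{i,j}^c$ are untouched.

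I expect the only genuinely delicate point to be confirming that \cref{ConvexEquation} survives \emph{with unchanged} big-M values: one must resist the temptation to re-derive the $M_i^c$ after the shift and instead observe that the left-hand side of the inequality is shift-invariant, so nothing about the big-M bound needs to change. The conceptual heart of the argument is simply recognizing this translation trick and invoking \cref{fact4}; everything downstream is a routine constraint-by-constraint check.
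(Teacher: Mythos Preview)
Your proposal is correct and follows essentially the same approach as the paper: both subtract the affine function $f_1^-$ from every piece $f_j^c$ and invoke \cref{fact4} to see that the resulting DC representation still equals $f$ while having $\widetilde{f}_1^- \equiv 0$. Your constraint-by-constraint verification (in particular the shift-invariance of $f^c(\bm{x}_i) - f_j^c(\bm{x}_i)$ for the big-M inequality) is more explicit than what the paper writes, but adds no new idea.
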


\begin{proof}[Proof of \cref{theorem_first_plane_zero}]
    To prove this, we must demonstrate that any \gls{cpwl} function $f$ can be expressed as the difference of two convex \gls{cpwl} functions $f = \max_{j \in \{1,...,P^+\}}^{} g_j^+ - \max_{j \in \{1,...,P^-\}}^{} g_j^-$, with $g_1^-=0$. According to \cref{fact1}, there is a set of affine functions $\{f_1^+, ..., f_{P^+}^+,f_1^-, ..., f_{P^-}^-\}$ such that $f$ can be expressed as $f = \max_{j \in \{1,...,P^+\}}^{} f_j^+ - \max_{k \in \{1,...,P^-\}}^{} f_k^-$. Let $g_j^c = f_j^c - f_1^-, \forall j \in \{1,...,P^c\}, c \in \{+,-\}$. The function $g_j^c$ is affine since it is the difference of two affine functions.  Moreover, $g_1^-=f_1^--f_1^-=0$. According to \cref{fact4}, we have $\max_{j \in \{1,...,P^c\}}^{} (g_j^c) = \max_{j \in \{1,...,P^c\}}^{} (f_j^c) - f_1^-$. Therefore, we can rewrite $f$ as $f = \max_{j \in \{1,...,P^+\}}^{}(f_j^+) - \max_{k \in \{1,...,P^-\}}^{}(f_k^-) + f_1^- - f_1^- = \max_{j \in \{1,...,P^+\}}^{}(g_j^+) - \max_{k \in \{1,...,P^-\}}^{}(g_k^-)$. 
\end{proof}

Note that \cref{first_plane_zero} affects not only the set of possible values for $\bm{a}_1^-, b_1^-$ but also for all $\bm{a}_j^c, b_j^c$. Also note that the fixed values of 0 for $\bm{a}_1^-, b_1^-$ are chosen arbitrarily; any value could be imposed and serve as a valid tightening constraint for the problem. Imposing \cref{first_plane_zero} effectively reduces the dimensionality of the search space by $(d+1)$. Formally, \cref{first_plane_zero} does not introduce any additional linear constraints, instead, it introduces tight bounds on $(d+1)$ linear variables.

\subsection{Sorting the affine pieces}

Let $\bm{a}_j^c = \left (
\begin{array}{c}
a_{j,1}^c \\
\vdots \\
a_{j,d}^c
\end{array}
\right ) \in \mathbb{R}^d$ denote the linear coefficients of the affine piece $f_j^c$.

\begin{theorem}
\label{theorem_plane_ordering}
    The following equation constitutes a valid tightening constraint of $MILP1(Q)$:
    \begin{equation}
    \label{plane_ordering}
        a_{j,1}^c \leq a_{j+1,1}^c, \quad j \in \{1,...,P^c-1\}, \quad c \in \{+,-\}
    \end{equation}
\end{theorem}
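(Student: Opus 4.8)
The plan is to follow the same strategy used in the proof of \cref{theorem_first_plane_zero}: to show that \cref{plane_ordering} is a valid tightening constraint, I would demonstrate that any \gls{cpwl} function $f$ admits a DC representation whose affine pieces satisfy the ordering constraint, so that imposing \cref{plane_ordering} removes no \gls{cpwl} solution from the feasible region of $MILP1(Q)$.

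The central observation is that the max function is invariant under any permutation of its arguments. Given any DC representation $f = \max_{j \in \{1,...,P^+\}} f_j^+ - \max_{k \in \{1,...,P^-\}} f_k^-$ (which exists by \cref{fact1}), for each $c \in \{+,-\}$ I would choose a permutation $\sigma^c$ of $\{1,...,P^c\}$ that reorders the affine pieces so that their first linear coefficients are non-decreasing, i.e., $a_{\sigma^c(1),1}^c \leq a_{\sigma^c(2),1}^c \leq \cdots \leq a_{\sigma^c(P^c),1}^c$. Such a permutation always exists, as it is simply a sort of the $P^c$ values $a_{j,1}^c$. Defining the relabeled pieces $g_j^c = f_{\sigma^c(j)}^c$, the permutation invariance of the max yields $\max_{j} g_j^c = \max_{j} f_j^c$, so that $f = \max_j g_j^+ - \max_k g_k^-$ is the same \gls{cpwl} function, now represented by pieces that satisfy $a_{j,1}^c \leq a_{j+1,1}^c$ for all $j$.

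To translate this into a statement about the feasible region of $MILP1(Q)$, I would note that permuting the affine pieces must be accompanied by the same permutation of the associated indicator variables, i.e., replacing $\delta_{i,j}^c$ with $\delta_{i,\sigma^c(j)}^c$. Since constraints \cref{ConvexEquation0,ConvexEquation,numberplaneperpoint} are symmetric under a simultaneous permutation of the coefficients $(\bm{a}_j^c, b_j^c)$ and the indicators $\delta_{i,j}^c$ across the index $j$, and since the remaining constraints do not involve $j$, the relabeled solution remains feasible, attains the same objective value, and is equivalent with respect to $S$. Hence every feasible point of $MILP1(Q)$ has a counterpart satisfying \cref{plane_ordering}, which establishes the claim.

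This is essentially a symmetry-breaking argument: \cref{plane_ordering} selects a canonical representative from each orbit of the index-permutation symmetry group acting on the affine pieces, so I do not anticipate a serious obstacle. The only minor points requiring care are ties in the first coefficient, which are harmless because \cref{plane_ordering} uses a non-strict inequality, and the bookkeeping needed to verify that permuting the pieces jointly with the indicator variables preserves feasibility in every constraint of $MILP1(Q)$.
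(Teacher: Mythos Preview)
Your proposal is correct and follows essentially the same approach as the paper: both proofs exploit the permutation symmetry of the affine pieces within each $f^c$ (equivalently, the permutation invariance of the $\max$ operator) to argue that one may always relabel the pieces in non-decreasing order of their first linear coefficient without changing the \gls{cpwl} solution or the objective. Your write-up is more explicit about the bookkeeping (permuting the $\delta_{i,j}^c$ jointly with the coefficients and checking that all constraints of $MILP1(Q)$ are preserved), but the underlying idea is identical to the paper's brief symmetry argument.
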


\begin{proof}[Proof of \cref{theorem_plane_ordering}]
    The problem $MILP1(Q)$ exhibits symmetries with respects to the group of variables $(\bm{a}_j^+,b_j^+)_{j=1,...,P^+}$ and $(\bm{a}_k^-,b_k^-)_{j=1,...,P^-}$, respectively. This means that permuting the affine pieces of $f^+$, 
    or $f^-$, does not affect the solution of the problem. Therefore, we can arbitrarily sort the affine pieces of $f^+$ and $f^-$ based on the ascending order of the values $(a_{j,1}^+)_{j=1,..,P^+}$ and $(a_{k,1}^-)_{k=1,..,P^-}$, respectively.
\end{proof}

It is important to note that, after sorting the affine pieces of $f^+$ and $f^-$, the procedure outlined in the proof of \cref{theorem_first_plane_zero} can be applied to impose \cref{first_plane_zero} with no impact on the \gls{cpwl} solution. Thus, \cref{first_plane_zero,plane_ordering} are compatible and can be applied simultaneously with no impact on the \gls{cpwl} solution. The number of possible arrangements for $P^c$ affine pieces is $P^c!$, which implies that implementing \cref{plane_ordering} reduces the feasible region to a by a factor $P^+!P^-!$ of its original extent. Additionally, applying \cref{plane_ordering} introduces $P^+ + P^- - 2$ constraints to the \gls{milp} problem.

\subsection{Imposing $d+1$ points per affine domain of $f^c$}

\begin{theorem}
\label{theorem_number_points_per_plane}
    Adding the following equation to $MILP1(Q)$ reduces the feasible region to a superset of $CPWL^*(S,\varepsilon,P^+,P^-)$. In other words, some non well-behaved solutions may be eliminated from the new feasible region but all well-behaved solutions are preserved.
    \begin{equation}
    \label{number_points_per_plane}
        \sum\limits_{i=1}^{N} \delta_{i,j}^c \geq d+1, \quad j \in \{1,...,P^c\},  \quad c \in \{+,-\}
    \end{equation}
\end{theorem}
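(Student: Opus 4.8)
The plan is to prove the stated inclusion directly: show that every well-behaved $\varepsilon$-approximation in $CPWL^*(S,\varepsilon,P^+,P^-)$ admits an encoding that stays feasible once \cref{number_points_per_plane} is appended to $MILP1(Q)$. Since $CPWL^*(S,\varepsilon,P^+,P^-) \subseteq CPWL(S,\varepsilon,P^+,P^-)$, which is exactly the feasible region of $MILP1(Q)$, any well-behaved $f$ already yields a feasible point; what remains is to show that its defining data can be chosen so that each affine domain $D_j^c = \{\bm{x} \in D : f^c(\bm{x}) = f_j^c(\bm{x})\}$ contains at least $d+1$ points of $S_{\mathbb{R}^d}$. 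Indeed, setting $\delta_{i,j}^c = 1$ precisely when $\bm{x}_i \in D_j^c$ reproduces the natural encoding of $f$ (for which all original constraints hold), and \cref{number_points_per_plane} then reads $|S_{\mathbb{R}^d} \cap D_j^c| \geq d+1$.

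The core structural step would use \cref{fact2}. By that fact, every affine domain $D_p$ of $f$ is an intersection $D_j^+ \cap D_k^-$, hence is contained in a single affine domain of $f^+$ and a single affine domain of $f^-$. Consequently the affine domains of $f$ refine the two partitions $\{D_j^+\}_j$ and $\{D_k^-\}_k$, so each $D_j^c$ is a union of affine domains of $f$. Because $f$ is well-behaved (\cref{def_well_behaved}), every affine domain of $f$ contains at least $d+1$ points of $S_{\mathbb{R}^d}$; therefore any non-empty $D_j^c$ inherits at least $d+1$ such points from any single affine domain of $f$ that it contains, which establishes \cref{number_points_per_plane} in the generic case. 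It is also worth recording that, since the objective and all remaining constraints coincide with those of $MILP1(Q)$ and since \cref{numberplaneperpoint} is an inequality, assigning a point lying in several domains (or on a shared boundary) to all of them causes no conflict.

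The main obstacle I anticipate is the treatment of pieces $f_j^c$ whose domain $D_j^c$ is empty or point-free. A DC representation realizing $f$ with the full counts $P^+, P^-$ may contain a piece that is dominated everywhere, in which case $D_j^c = \emptyset$ and \cref{number_points_per_plane} fails verbatim. Here I would exploit the non-uniqueness of the DC representation together with the flexibility of \cref{defCPWL}, which permits two affine domains to coincide: each redundant piece is replaced by a duplicate of an ``active'' piece whose domain already carries $\geq d+1$ points, leaving $f^c = \max_j f_j^c$ and hence $f$ unchanged (cf. \cref{fact4}) while making the duplicate's domain equal to that of the active piece. A related subtlety to settle is the compatibility between the affine pieces of $f$ used in the definition of ``well-behaved'' and the finer decomposition $\{D_j^+ \cap D_k^-\}$ induced by the chosen representation: one must pick a representation in which the creases of $f^+$ and $f^-$ do not cancel inside a single affine piece of $f$, so that no affine piece of $f$ straddles two domains $D_j^c$ and splits its $\geq d+1$ points between them. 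Showing that such a reduced representation of the well-behaved $f$ always exists, so that the clean counting argument of the previous paragraph applies to every $j$ and $c$, is the step I expect to require the most care.
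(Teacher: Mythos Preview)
Your core argument --- each $D_j^c$ contains at least one affine domain $D_p$ of $f$, and by well-behavedness $|D_p \cap S_{\mathbb{R}^d}| \geq d+1$, hence $|D_j^c \cap S_{\mathbb{R}^d}| \geq d+1$ --- is exactly the paper's proof. The paper, however, does not engage with the two obstacles you anticipate: it simply invokes \cref{fact1} to assert $Int(D_j^+) \neq \varnothing$ (so no piece of $f^c$ is treated as empty or redundant), picks any $k$ with $Int(D_j^+ \cap D_k^-) \neq \varnothing$, cites \cref{fact2} to declare $D_p = D_j^+ \cap D_k^-$ an affine domain of $f$, and applies \cref{def_well_behaved} directly to that $D_p$. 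In effect, the paper identifies the affine pieces of the well-behaved $f$ with the non-degenerate DC intersections from the outset, so the representation-compatibility and empty-domain issues you raise never surface. Your more careful treatment (duplicating inactive pieces, aligning the two representations so that no affine piece of $f$ straddles a crease of $f^c$) is not wrong and is arguably more rigorous, but it is more machinery than the paper deploys for this short argument.
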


\begin{proof}[Proof of \cref{theorem_number_points_per_plane}]
    Let $f = f^+ - f^-$ be a well-behaved \gls{cpwl} solution. Let $D_j^+$ be one of the affine domains of $f^+$. According to \cref{fact1}, $Int(D_j^+) \neq \varnothing$ and $D = \bigcup_{k=1}^{P^-}D_k^-$. Therefore, there exists a $k$ such that $Int(D_j^+ \cap D_k^-) \neq \varnothing$. According to \cref{fact2}, $D_p = D_j^+ \cap D_k^-$ is an affine domain of $f$. According to \cref{def_well_behaved}, $|D_p \cap S_{\mathbb{R}^d}| \geq d+1$. Furthermore, $D_p \subset D_j^+$ implies $D_p \cap S_{\mathbb{R}^d} \subset D_j^+ \cap S_{\mathbb{R}^d}$. Therefore, we have $|D_j^+ \cap S_{\mathbb{R}^d}| \geq |D_p \cap S_{\mathbb{R}^d}| \geq d+1$. Similarly, for an affine domain $D_k^-$ of $f^-$ we obtain $|D_k^- \cap S_{\mathbb{R}^d}| \geq d+1$. In other words, each affine domain of $f^+$ and $f^-$ contains at least $d+1$ points. 
\end{proof}

Note that the inequality from \cref{numberplaneperpoint}, which allows each point to belong to more than one affine domain of $f^+$ or $f^-$, is critical for utilizing \cref{number_points_per_plane}. If we were instead imposing a single affine domain per point, as in the formulation from \cite{kazda_nonconvex_2021}, \cref{number_points_per_plane} would become too restrictive. In fact, the combined requirement of having only one affine domain per point and at least $(d+1)$ points per affine domain would imply that $N \geq \max(P^+,P^-)(d+1)$, making the \gls{milp} problem infeasible if that condition was not satisfied. Using \cref{number_points_per_plane} eliminates some non well-behaved \gls{cpwl} solutions from the feasible region. However, all well-behaved solutions are preserved. Owing to \cref{theorem_well_behaved}, it is proven that this does not impact the quality of the feasible \gls{cpwl} solutions, as any \gls{cpwl} $\varepsilon$-approximation of S has a well-behaved version. Implementing \cref{number_points_per_plane} adds $P^+ + P^-$ constraints to the \gls{milp} problem.

\subsection{Imposing $d+1$ points per affine domain of $f$}

Alternatively, we can impose a set of constraints that further reduces the feasible region to the set of well-behaved \gls{cpwl} solutions. This approach requires the introduction of two additional sets of variables $\beta_{i,j,k}$ and $\gamma_{j,k}$. In the following equations, the indices $i \in \{1,...,N\}$, $j \in \{1,...,P^+\}$, $k \in \{1,...,P^-\}$, unless otherwise specified.

\begin{theorem}
\label{theorem_nber_points_affine_plane}
    Adding the following equations to $MILP1(Q)$ reduces the feasible region to $CPWL^*(S,\varepsilon,P^+,P^-)$:
    \begin{equation}
    \label{beta_delta_plus}
        \beta_{i,j,k} \leq \delta_{i,j}^+
    \end{equation}
    \begin{equation}
    \label{beta_delta_minus}
        \beta_{i,j,k} \leq \delta_{i,k}^-
    \end{equation}
    \begin{equation}
    \label{beta_delta_plus_minus}
        \beta_{i,j,k} \geq \delta_{i,j}^+ + \delta_{i,k}^- - 1
    \end{equation}
    \begin{equation}
    \label{gamma_beta}
        \beta_{i,j,k} \leq \gamma_{j,k}
    \end{equation}
    \begin{equation}
    \label{sum_beta_gamma}
        \sum_{i=1}^N\beta_{i,j,k} \geq (d+1)\gamma_{j,k}
    \end{equation}
    \begin{equation}
    \label{beta_bounds}
        0 \leq \beta_{i,j,k} \leq 1
    \end{equation}
    \begin{equation}
    \label{gamma_bounds}
        0 \leq \gamma_{j,k} \leq 1
    \end{equation}
\end{theorem}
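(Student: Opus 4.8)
The plan is to first pin down what the auxiliary variables encode and then prove the two inclusions separately. The constraints \cref{beta_delta_plus,beta_delta_minus,beta_delta_plus_minus} together with the bounds \cref{beta_bounds} are the textbook linearization of a product of two binaries: since $\delta_{i,j}^+$ and $\delta_{i,k}^-$ are binary, they force $\beta_{i,j,k} = \delta_{i,j}^+ \delta_{i,k}^-$, so that $\beta_{i,j,k}=1$ exactly when $\bm{x}_i$ is assigned simultaneously to $D_j^+$ and $D_k^-$; by \cref{fact2} this is precisely the affine domain $D_j^+\cap D_k^-$ of $f$. I would then show that \cref{gamma_beta,sum_beta_gamma,gamma_bounds} make $\gamma_{j,k}$ behave as the indicator ``$\exists i:\beta_{i,j,k}=1$'': if some $\beta_{i,j,k}=1$ then \cref{gamma_beta} forces $\gamma_{j,k}=1$ and \cref{sum_beta_gamma} then forces $\sum_i \beta_{i,j,k}\ge d+1$; if all $\beta_{i,j,k}=0$ then \cref{sum_beta_gamma} forces $\gamma_{j,k}=0$ and the requirement becomes vacuous. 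The net effect of the whole block is one combinatorial condition: for every pair $(j,k)$, the number of data points assigned to $D_j^+\cap D_k^-$ is either $0$ or at least $d+1$.

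For the inclusion (feasible region) $\subseteq CPWL^*(S,\varepsilon,P^+,P^-)$, I would take a feasible point with its function $f=f^+-f^-$ and argue that each affine piece of $f$ interpolates at least $d+1$ points, as required by \cref{def_well_behaved}. Fix a full-dimensional affine domain $D_p=D_j^+\cap D_k^-$ that contains a data point in its interior; since interior points of $D_p$ lie in no other $+$-domain or $-$-domain, \cref{numberplaneperpoint} leaves no choice but $\delta_{i,j}^+=\delta_{i,k}^-=1$, hence $\beta_{i,j,k}=1$ and $\gamma_{j,k}=1$, and the combinatorial condition gives at least $d+1$ points in $D_p$. For the reverse inclusion $CPWL^*(S,\varepsilon,P^+,P^-)\subseteq$ (feasible region), I would start from a well-behaved $f$ and build an explicit assignment, exploiting the slack in the inequality \cref{numberplaneperpoint}: set $\delta_{i,j}^+=1$ for the domains $D_j^+$ containing $\bm{x}_i$ and $\delta_{i,k}^-=1$ for the domains $D_k^-$ containing $\bm{x}_i$, put $\beta_{i,j,k}=\delta_{i,j}^+\delta_{i,k}^-$ and $\gamma_{j,k}=\max_i\beta_{i,j,k}$. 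All original constraints are then satisfied by construction, and for each full-dimensional $D_j^+\cap D_k^-$ the requirement of $\ge d+1$ points is exactly well-behavedness.

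The step I expect to be the main obstacle concerns data points lying on shared domain boundaries, i.e., simultaneously on a facet of $f^+$ and a facet of $f^-$. For such a point the naive maximal assignment above can switch on a pair $(j,k)$ for which $D_j^+\cap D_k^-$ is only lower-dimensional and may carry fewer than $d+1$ points, which would violate \cref{sum_beta_gamma}. The heart of the argument is therefore to choose the $\delta$ assignment so that (i) every full-dimensional piece still accumulates at least $d+1$ of the points it interpolates, while (ii) no lower-dimensional intersection is ever activated with too few points; here I would lean on the general-position hypothesis on $S_{\mathbb{R}^d}$ and on the geometry of \cref{fact2} to route each boundary point only to the neighbouring full-dimensional domains. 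Symmetrically, the delicate case in the completeness direction is a full-dimensional piece whose interpolated points all sit on its boundary, or a piece carrying no data point at all; handling these configurations, by showing they either cannot occur for a feasible $f$ or reduce, via the equivalence in \cref{def_well_behaved_version}, to a genuinely well-behaved representative, is what I would treat most carefully.
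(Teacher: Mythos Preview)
Your first paragraph is essentially the entirety of the paper's proof. The paper's argument consists of exactly the observations you make there: that \cref{beta_delta_plus,beta_delta_minus,beta_delta_plus_minus,beta_bounds} force $\beta_{i,j,k}=\delta_{i,j}^+\wedge\delta_{i,k}^-$; that \cref{gamma_beta,gamma_bounds} force $\gamma_{j,k}=1$ whenever some $\beta_{i,j,k}=1$; that \cref{sum_beta_gamma} then reads ``if one data point is assigned to $D_j^+\cap D_k^-$ then at least $d+1$ are''; and that the tightness of these constraints, together with the binariness of the $\delta$'s, forces $\beta$ and $\gamma$ to take only the values $0$ and $1$. The paper stops there and declares the theorem proved.

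Your second and third paragraphs therefore go well beyond the paper. The boundary configurations you worry about---a maximal $\delta$-assignment that activates a pair $(j,k)$ whose intersection $D_j^+\cap D_k^-$ is lower-dimensional and carries fewer than $d+1$ points, or a full-dimensional piece all of whose interpolated points lie on its boundary---are genuine subtleties that the paper's proof does not address; it operates at the level of ``these constraints encode well-behavedness'' rather than a careful double inclusion. Your plan (route boundary points only to neighbouring full-dimensional pieces, invoke the general-position hypothesis) is a reasonable way to try to close those gaps, but be aware that the paper itself provides no such argument, so you would be supplying rigour the authors omitted rather than reconstructing something they did.
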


\begin{proof}[Proof of \cref{theorem_nber_points_affine_plane}]
\Cref{beta_delta_plus,beta_delta_minus,beta_delta_plus_minus,beta_bounds} define $\beta_{i,j,k}$ as an indicator variable which is equal to 1 if $\bm{x}_i \in D_j^+ \cap D_k^-$ and 0 otherwise, i.e., $\beta_{i,j,k} = \delta_{i,j}^+ \wedge \delta_{i,k}^-$. \Cref{gamma_beta,gamma_bounds} define $\gamma_{j,k}$ as an indicator variable which is equal to 1 if $D_j^+ \cap D_k^-$ contains at least one point, i.e., $\bigvee_{i=1}^N \beta_{i,j,k} = 1 \Rightarrow \gamma_{j,k} = 1$. Finally, \cref{sum_beta_gamma} formulates the condition that if one point is in $D_j^+ \cap D_k^-$ then $D_j^+ \cap D_k^-$ must contain at least $d+1$ points, i.e., $\gamma_{j,k}=1 \Rightarrow \sum_{i=1}^N\beta_{i,j,k} \geq (d+1)$. Formally, $\beta_{i,j,k}$ and $\gamma_{j,k}$ should be defined as binary variables. However, the tight constraints from \cref{beta_delta_plus,beta_delta_minus,beta_delta_plus_minus,gamma_beta,sum_beta_gamma} involving the binary variables $\delta_{i,j}^c$ ensure that $\beta_{i,j,k}$ and $\gamma_{j,k}$ can only take values in $\{0,1\}$. 
\end{proof}

\Cref{beta_delta_plus,beta_delta_minus,beta_delta_plus_minus,gamma_beta,sum_beta_gamma,beta_bounds,gamma_bounds} involve the use of $P^+P^-(N+1)$ additional variables and $P^+P^-(4N+1)$ additional constraints. These constraints restrict the feasible space to be $CPWL^*(S,\varepsilon,P^+,P^-)$. Note that the indicator variable $\gamma_{j,k}$ is necessary in \cref{sum_beta_gamma} because the intersection of an affine domain of $f^+$ and $f^-$ may be empty. 

\begin{remark}
    \Cref{number_points_per_plane} is implied by \cref{beta_delta_plus,beta_delta_minus,beta_delta_plus_minus,gamma_beta,sum_beta_gamma,beta_bounds,gamma_bounds}, meaning that the latter dominate the former. In other words, imposing $d+1$ points per affine domain of $f$ implies, and is more restrictive than, imposing $d+1$ points per affine domain of $f^c$.
\end{remark}

\subsection{Tightening the big-M parameters}
\label{section_bigM}
In order to identify tight big-M values, we first need to define several sets. Let $A$ denote the set of all affine functions $D \rightarrow \mathbb{R}$. Let $[S]^{d+1}$ denote the set of all subsets of $S$ composed of $d+1$ points. Let $s=(\bm{x}_{i_k},z_{i_k})_{k=1,...,d+1} \in [S]^{d+1}$. We define the set $B \left( s,\varepsilon \right) = \{(\bm{x}_{i_k},z_{i_k}+e_k)_{k=1,...,d+1}: (e_k)_{k=1,...,d+1} \in \{-\varepsilon,+\varepsilon\}^{d+1}\}$. We define the set $A_{\varepsilon,s} = \{g \in A: g$ $ \varepsilon-$approximates $s\}$ and $A^*_{\varepsilon,s} = \{g \in A: g $ interpolates $s' \in B(s,\varepsilon)\}$. We also note $A_\varepsilon(S) = \bigcup_{s \in [S]^{d+1}} A_{\varepsilon,s}$ and $A^*_\varepsilon(S) = \bigcup_{s \in [S]^{d+1}} A^*_{\varepsilon,s}$. For convenience, let $\neg{c} =      
    \begin{cases}
      ``-"&\text{if $c=``+"$}\\
      ``+"&\text{if $c=``-"$}\\
    \end{cases}$. Finally, given a \gls{cpwl} solution $f = f^+ - f^-$, we define $f_{j,k} = f_j^+ - f_k^-$.

\begin{proposition}
\label{number_affine_functions}
    The set $A^{*}_{\varepsilon}(S)$ contains at most $\binom{N}{d+1}2^{d+1}$ affine functions.
\end{proposition}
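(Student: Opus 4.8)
The plan is to count the affine functions in $A^*_\varepsilon(S)$ by decomposing the union that defines it and bounding the number of functions contributed by each subset $s \in [S]^{d+1}$, then applying a union bound.

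First I would unfold the definitions. By construction $A^*_\varepsilon(S) = \bigcup_{s \in [S]^{d+1}} A^*_{\varepsilon,s}$, and for a fixed $s$ the set $A^*_{\varepsilon,s}$ consists of the affine functions that interpolate some $s' \in B(s,\varepsilon)$. Rewriting this as $A^*_{\varepsilon,s} = \bigcup_{s' \in B(s,\varepsilon)} \{g \in A : g \text{ interpolates } s'\}$ reduces the task to counting, for each perturbed point set $s'$, how many affine functions interpolate it, and then summing over the perturbations and over $s$.

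The key observation is that each $s' \in B(s,\varepsilon)$ consists of exactly $d+1$ points whose projections onto $\mathbb{R}^d$ are precisely the $\bm{x}_{i_k}$ appearing in $s$; only the $z$-coordinates are shifted by $\pm\varepsilon$. Since the points of $S_{\mathbb{R}^d}$ are in general position, any $d+1$ of them are affinely independent, so the $\bm{x}$-coordinates of $s'$ remain in general position after the perturbation. By \cref{fact_linear_interpolation}, there is therefore a \emph{unique} affine function interpolating $s'$, so $|\{g \in A : g \text{ interpolates } s'\}| = 1$. As there is one element of $B(s,\varepsilon)$ per sign vector in $\{-\varepsilon,+\varepsilon\}^{d+1}$, we have $|B(s,\varepsilon)| = 2^{d+1}$, hence $|A^*_{\varepsilon,s}| \leq 2^{d+1}$.

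Finally I would apply the union bound over the $\binom{N}{d+1}$ subsets $s \in [S]^{d+1}$ to obtain $|A^*_\varepsilon(S)| \leq \sum_{s \in [S]^{d+1}} |A^*_{\varepsilon,s}| \leq \binom{N}{d+1}\, 2^{d+1}$. The result is an inequality rather than an equality precisely because distinct sign vectors, or distinct subsets $s$, may produce the same interpolating affine function, so the union can contain strictly fewer functions than the product suggests. I do not expect a substantial obstacle here: the only point requiring care is confirming that the $\pm\varepsilon$ perturbation leaves the $\bm{x}$-coordinates in general position, which is what allows \cref{fact_linear_interpolation} to guarantee that each perturbed set contributes exactly one affine function.
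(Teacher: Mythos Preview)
Your proposal is correct and follows essentially the same approach as the paper: both count $|[S]^{d+1}|=\binom{N}{d+1}$, bound $|A^*_{\varepsilon,s}|$ by $|B(s,\varepsilon)|=2^{d+1}$, and apply a union bound. If anything, you are slightly more explicit than the paper in invoking \cref{fact_linear_interpolation} and the general-position assumption to justify that each $s'\in B(s,\varepsilon)$ determines a unique interpolant, which is exactly the step the paper compresses into the equality $|A^*_{\varepsilon,s}| = |B(s,\varepsilon)|$.
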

\begin{proof}[Proof of \cref{number_affine_functions}]
    First, we have $|[S]^{d+1}| = \binom{N}{d+1}$. Additionally, $|A^*_{\varepsilon,s}| = |B(s,\varepsilon)| = |\{-\varepsilon,+\varepsilon\}^{d+1}| = 2^{d+1}$.
    Therefore, $|A^*_{\varepsilon}(S)|
    = |\bigcup_{s \in [S]^{d+1}} A^*_{\varepsilon,s}|
    \leq \sum_{s \in [S]^{d+1}} |A^*_{\varepsilon,s}|
    = \binom{N}{d+1}2^{d+1}$.
\end{proof}

\begin{lemma}
\label{lemma4}
    Let $f \in CPWL^*(S,\varepsilon,P^+,P^-)$, $\bm{x} \in D$. Let $(j,k) \in \{1,...,P^+\} \times \{1,...,P^-\} :Int(D_{j}^+ \cap D_{k}^-) \neq \varnothing$. Then, we have $$\min_{g \in A^*_\varepsilon(S)}g(\bm{x}) \leq f_{j,k}(\bm{x}) \leq \max_{g \in A^*_\varepsilon(S)}g(\bm{x})$$
\end{lemma}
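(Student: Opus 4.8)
The plan is to reduce the two-sided bound to a single $(d+1)$-point simplex and then exploit the fact that a linear interpolant depends affinely on the data values, so that extrema over a box are attained at its vertices, which are precisely the functions in $A^*_{\varepsilon,s}$.

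First I would locate the relevant simplex. Since $Int(D_j^+ \cap D_k^-) \neq \varnothing$, \cref{fact2} guarantees that $D_p := D_j^+ \cap D_k^-$ is an affine domain of $f$ on which $f$ coincides with $f_{j,k} = f_j^+ - f_k^-$. Setting $e_i = f(\bm{x}_i) - z_i$ (so $|e_i| \leq \varepsilon$ because $f$ is an $\varepsilon$-approximation), the hypothesis $f \in CPWL^*(S,\varepsilon,P^+,P^-)$ together with \cref{def_well_behaved} says that $f$, regarded as an interpolation of $(\bm{x}_i, z_i + e_i)_i$, has each affine piece interpolating at least $d+1$ of those perturbed points. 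Applying this to the piece on $D_p$, I obtain indices $i_1,\dots,i_{d+1}$ with $\bm{x}_{i_\ell} \in D_p$ and $f_{j,k}(\bm{x}_{i_\ell}) = z_{i_\ell} + e_{i_\ell}$. Set $s = (\bm{x}_{i_\ell}, z_{i_\ell})_{\ell=1}^{d+1} \in [S]^{d+1}$.

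Next, for a perturbation vector $\bm{e} = (e_1,\dots,e_{d+1})$, let $g_{\bm{e}}$ be the unique affine interpolation of $(\bm{x}_{i_\ell}, z_{i_\ell} + e_\ell)_\ell$, which exists and is unique by \cref{fact_linear_interpolation}. Its coefficient vector $\bm{w} = (\bm{a}^T, b)^T$ solves the system $\bm{a}^T \bm{x}_{i_\ell} + b = z_{i_\ell} + e_\ell$, i.e. $M\,\bm{w} = \bm{\zeta} + \bm{e}$, where $M$ is the $(d+1)\times(d+1)$ matrix with rows $(\bm{x}_{i_\ell}^T, 1)$ and $\bm{\zeta} = (z_{i_\ell})_\ell$. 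General position makes $M$ invertible, so $\bm{w} = M^{-1}(\bm{\zeta} + \bm{e})$ depends affinely on $\bm{e}$; hence, for the fixed point $\bm{x}$, the scalar $g_{\bm{e}}(\bm{x}) = (\bm{x}^T, 1)\, M^{-1}(\bm{\zeta} + \bm{e})$ is an affine function of $\bm{e}$.

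The crux is then elementary convex analysis: an affine function on the box $[-\varepsilon,\varepsilon]^{d+1}$ attains its minimum and maximum at a vertex $\bm{e}' \in \{-\varepsilon,+\varepsilon\}^{d+1}$. Since $f_{j,k} = g_{\bm{e}^*}$ for $\bm{e}^* = (e_{i_1},\dots,e_{i_{d+1}}) \in [-\varepsilon,\varepsilon]^{d+1}$, this yields $\min_{\bm{e}'} g_{\bm{e}'}(\bm{x}) \leq f_{j,k}(\bm{x}) \leq \max_{\bm{e}'} g_{\bm{e}'}(\bm{x})$ over $\bm{e}' \in \{-\varepsilon,+\varepsilon\}^{d+1}$. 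By the definitions of $B(s,\varepsilon)$ and $A^*_{\varepsilon,s}$, the collection $\{g_{\bm{e}'}\}$ is exactly $A^*_{\varepsilon,s}$, and $A^*_{\varepsilon,s} \subseteq A^*_\varepsilon(S)$, so enlarging the extrema from $A^*_{\varepsilon,s}$ to $A^*_\varepsilon(S)$ only loosens the bounds and gives the claim. I expect the main obstacle to be the bookkeeping of the first step — correctly threading the well-behaved $\varepsilon$-approximation hypothesis through \cref{fact2} and \cref{def_well_behaved} to guarantee that $f_{j,k}$ actually interpolates $d+1$ of the perturbed points $(\bm{x}_{i_\ell}, z_{i_\ell} + e_{i_\ell})$ — whereas the affine-dependence and vertex-extremality argument is routine once $s$ is fixed.
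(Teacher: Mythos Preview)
Your proof is correct and follows essentially the same approach as the paper's: both identify a subset $s\in[S]^{d+1}$ that $f_{j,k}$ $\varepsilon$-approximates via the well-behaved hypothesis, then show that the extrema of $g(\bm{x})$ over $A_{\varepsilon,s}$ are attained at the vertex set $A^*_{\varepsilon,s}$. The only cosmetic difference is that the paper phrases the vertex argument as an LP in the coefficients $(\bm{a},b)$ of $g$, whereas you reparametrize by the perturbation vector $\bm{e}$ via $M^{-1}$ and observe that an affine functional on the box $[-\varepsilon,\varepsilon]^{d+1}$ attains its extrema at the corners; these are the same polytope-vertex argument under the linear change of variables $\bm{e}\leftrightarrow(\bm{a},b)$.
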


\begin{proof}[Proof of \cref{lemma4}]
    Since $f$ is well-behaved, $f_{j,k}$ is an affine piece of $f$ that $\varepsilon$-approximates a subset $s\in [S]^{d+1}$. Therefore, $f_{j,k} \in A_{\varepsilon,s} \subset A_\varepsilon(S)$. As a result, $\min_{g \in A_\varepsilon(S)}g(\bm{x}) \leq f_{j,k}(\bm{x}) \leq \max_{g \in A_\varepsilon(S)}g(\bm{x})$. What remains to be proven is that the extrema of $g(\bm{x})$ on $A_{\varepsilon}(S)$ can be found on $A^*_{\varepsilon}(S)$. Note that $\max_{g \in A_{\varepsilon,s}}g(\bm{x})$ is equal to the optimal objective value of the LP optimization problem $LP_{max}(s,\bm{x},\varepsilon)$:
    $$\max g(\bm{x}),
    \quad s.t. \quad g \in A,
    \quad -\varepsilon \leq g(\bm{x}_{i_k}) - z_{i_k} \leq \varepsilon, \quad (\bm{x}_{i_k},z_{i_k})\in s$$
    Similarly, $\min_{g \in A_{\varepsilon,s}}g(\bm{x})$ is equal to the optimal objective value of $LP_{min}(s,\bm{x},\varepsilon)$. In addition, $A_{\varepsilon,s}$ corresponds to the feasible region of $LP_{min}(s,\bm{x},\varepsilon)$ and $LP_{max}(s,\bm{x},\varepsilon)$, whereas $A^*_{\varepsilon,s}$ corresponds to the vertices, i.e. extreme points, of $A_{\varepsilon,s}$, where the optimal solution is located.
    Consequently, $\max_{g \in A_{\varepsilon}(S)}g(\bm{x}) = \max_{s \in [S]^{d+1}}(\max_{g \in A_{\varepsilon,s}}(g(\bm{x}))) = \max_{s \in [S]^{d+1}}(\max_{g \in A^*_{\varepsilon,s}}(g(\bm{x}))) = \max_{g \in A^*_{\varepsilon}(S)}g(\bm{x})$. 
    Similarly, $\min_{g \in A_{\varepsilon}(S)}g(\bm{x}) = \min_{g \in A^*_{\varepsilon}(S)}g(\bm{x})$.
\end{proof}

\begin{lemma}
\label{lemma5}
    Let $f \in CPWL^*(S,\varepsilon,P^+,P^-)$, $\bm{x} \in D$, $c \in \{+,-\}$, $(j,k) \in \{1,...,P^c\}^2$. Then,
    $$|f_j^c(\bm{x}) - f_k^c(\bm{x})| \leq 
    \min \left( P^c-1,P^{\neg{c}} \right) \left( \max_{g \in A^*_\varepsilon(S)}g(\bm{x}) - \min_{g \in A^*_\varepsilon(S)}g(\bm{x}) \right)$$
\end{lemma}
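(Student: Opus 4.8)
The plan is to reduce the statement to a purely combinatorial chaining argument on the evaluations of the affine pieces at the fixed point $\bm{x}$, using \cref{lemma4} as the only analytic input. Write $u_{j'} = f_{j'}^+(\bm{x})$ and $v_{\ell'} = f_{\ell'}^-(\bm{x})$, and set $m(\bm{x}) = \min_{g \in A^*_\varepsilon(S)} g(\bm{x})$, $M(\bm{x}) = \max_{g \in A^*_\varepsilon(S)} g(\bm{x})$, $\Delta(\bm{x}) = M(\bm{x}) - m(\bm{x})$. By the symmetry between $f^+$ and $f^-$ it suffices to treat $c = +$, i.e. to bound $|u_j - u_k|$. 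The key observation is that \cref{lemma4} yields $u_{j'} - v_{\ell'} \in [m(\bm{x}), M(\bm{x})]$ for every pair $(j',\ell')$ with $Int(D_{j'}^+ \cap D_{\ell'}^-) \neq \varnothing$, so the only information I retain is the bipartite incidence graph $G$ on vertex classes $\{1,\dots,P^+\}$ and $\{1,\dots,P^-\}$, where $\{j',\ell'\}$ is an edge precisely when that cell has nonempty interior.

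Next I would establish two structural facts about $G$. First, every vertex has degree at least one: since $f$ is well-behaved each $D_{j'}^+$ contains at least $d+1$ points and hence has nonempty interior (general position), and because $\{D_{\ell'}^-\}$ partitions $D$ (\cref{fact1}) some cell $D_{j'}^+\cap D_{\ell'}^-$ has nonempty interior; symmetrically for $f^-$. Second, and this is the crux, $G$ is connected. To see this I would pick generic interior points of two cells indexed by $(j,\ell_0)$ and $(k,\ell_t)$ and join them by a path inside the interior of $D$ (connected because $D$ is convex), perturbed to be generic so that it meets the refinement complex $\{D_{j'}^+\cap D_{\ell'}^-\}$ only transversally across facets and only finitely often. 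By \cref{fact2}, as already exploited in \cref{lemma_neighbor_inequality}, each such facet is shared either by two pieces of $f^+$ or by two pieces of $f^-$, so crossing it changes exactly one of the two indices. The resulting sequence of cells is therefore a walk in $G$ joining $j$ to $k$, proving connectivity.

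With connectivity in hand I would extract a simple path $j = j_0, \ell_1, j_1, \ell_2, \dots, \ell_t, j_t = k$ in $G$. Telescoping the edge constraints along the path, each step satisfies $u_{j_{s-1}} - u_{j_s} = (u_{j_{s-1}} - v_{\ell_s}) - (u_{j_s} - v_{\ell_s}) \in [-\Delta(\bm{x}), \Delta(\bm{x})]$, whence $|u_j - u_k| \le t\,\Delta(\bm{x})$. Finally I would bound the number of steps $t$ in two ways: the $j_0,\dots,j_t$ are $t+1$ distinct vertices in $\{1,\dots,P^+\}$, giving $t \le P^+ - 1$, while the $\ell_1,\dots,\ell_t$ are $t$ distinct vertices in $\{1,\dots,P^-\}$, giving $t \le P^-$. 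Hence $|u_j - u_k| \le \min(P^+-1, P^-)\,\Delta(\bm{x})$, which is exactly the claim for $c = +$; the case $c = -$ follows by interchanging the roles of the two index classes, since then $t+1 \le P^-$ and $t \le P^+$ give $\min(P^- - 1, P^+) = \min(P^c-1,P^{\neg c})$.

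The step I expect to be the main obstacle is the connectivity of $G$: making the generic-path argument rigorous requires that the common refinement $\{D_{j'}^+\cap D_{\ell'}^-\}$ be a genuine polyhedral subdivision of the connected set $D$ whose facets separate exactly two full-dimensional cells, and that a generic path can be chosen avoiding all faces of codimension $\ge 2$. Everything else is either a direct appeal to \cref{lemma4} or elementary counting along a simple path.
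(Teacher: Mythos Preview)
Your approach is correct and parallels the paper's: both produce a chain of cells $D_{j'}^+ \cap D_{\ell'}^-$ (with nonempty interior) linking index $j$ to index $k$, telescope via \cref{lemma4}, and bound the chain length by $\min(P^+-1,P^-)$. The implementations differ. You abstract to the bipartite incidence graph $G$, argue connectivity, and then extract a \emph{simple} path so that distinctness of the $j_s$ and of the $\ell_s$ is automatic. The paper instead picks points $\bm{x}_j \in D_j^+$ and $\bm{x}_k \in D_k^+$ and uses the straight line segment $L \subset D$ joining them: the sequence of cells crossed by $L$ furnishes the walk directly (this \emph{is} the connectivity argument you flagged as the main obstacle), and because each $D_{j'}^+$ and $D_{\ell'}^-$ is convex its intersection with $L$ is a single subinterval, so every index value appears in one contiguous block along $L$. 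That convexity observation gives $|\{m:j_m\neq j_{m+1}\}|\le P^+-1$ immediately, and a separate grouping by the value of $k_m$ gives the $P^-$ bound, without ever extracting a simple path. The paper still assumes WLOG that $L$ changes only one index at each crossing, which is exactly the genericity issue you raised; using a straight segment between generic interior points makes this perturbation easy to justify. Your graph formulation is cleaner once connectivity is granted, while the paper's geometric line proves and exploits that connectivity in one stroke.
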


\begin{proof}[Proof of \cref{lemma5}]
We prove the case $c = ``+"$, the case $c = ``-"$ being symmetric. Let $\bm{x}_j \in D_j^+$, $\bm{x}_k \in D_k^+$. By convexity of $D$, there exists a line segment $L$ connecting $\bm{x}_j$ to $\bm{x}_k$ that is contained in $D$. $L$ crosses $T$ affine domains of $f$, each of which is the intersection of an affine domain of $f^+$ and $f^-$. Let $(j_m,k_m)_{m=1,...,T}$ be such that $L$ crosses the non-empty domains $D_{j_m}^+ \cap D_{k_m}^-$ in ascending order of $m$ when traversing from $\bm{x}_j$ to $\bm{x}_k$. An example is depicted in \cref{fig_path_domains} for $d=2$. We have $j_1 = j$ and $j_T=k$. Without loss of generality, we assume that at each step $m$, the line segment $L$ transitions to either a new affine domain of $f^+$ or a new affine domain of $f^-$, but not both. In other words, $(j_m \neq j_{m+1}) \oplus (k_m \neq k_{m+1}), \forall m=1,..., T-1$. Since the intersection of $L$ with an affine domain is convex, $\{m : j_m = j\}$ and $\{m : k_m = k\}$ are sets of consecutive numbers, i.e., without gaps. Consequently, $j_m$ and $k_m$, can change value at most $P^+-1$ and $P^--1$ times, respectively. That is, $|\{m: j_m \neq j_{m+1}\}| \leq P^+-1$. Thus, we have:
    \begin{gather*}
    f_j^+(\bm{x}) - f_k^+(\bm{x}) 
    = f_{j_1}^+(\bm{x}) - f_{j_T}^+(\bm{x})
    \underset{(a)}{=} \sum_{m=1}^{T-1}(f_{j_{m}}^+(\bm{x}) - f_{j_{m+1}}^+(\bm{x}))
    \underset{(b)}{=} \sum_{m: j_m \neq j_{m+1}}(f_{j_{m}}^+(\bm{x}) - f_{j_{m+1}}^+(\bm{x}))\\
    \underset{(c)}{=} \sum_{m: j_m \neq j_{m+1}}(f_{j_{m}}^+(\bm{x}) - f_{k_{m}}^-(\bm{x}) + f_{k_{m+1}}^-(\bm{x}) + f_{j_{m+1}}^+(\bm{x}))
    = \sum_{m: j_m \neq j_{m+1}}(f_{j_m,k_m}(\bm{x}) - f_{j_{m+1},k_{m+1}}(\bm{x}))\\
    \underset{(d)}{\leq} \sum_{m: j_m \neq j_{m+1}} \left( \max_{g \in A^*_{\varepsilon}(S)}g(\bm{x}) - \min_{g \in A^*_{\varepsilon}(S)}g(\bm{x}) \right)
    \underset{(e)}{\leq} (P^+-1) \left( \max_{g \in A^*_{\varepsilon}(S)}g(\bm{x}) - \min_{g \in A^*_{\varepsilon}(S)}g(\bm{x}) \right)
    \end{gather*}
    $(a)$ uses a telescoping sum, $(b)$ is due to the fact that $f_{j_{m}}^+(\bm{x}) = f_{j_{m+1}}^+(\bm{x})$ when $j_m = j_{m+1}$, $(c)$ derives from $(j_m \neq j_{m+1}) \Rightarrow (k_m = k_{m+1})$, $(d)$ is a result of \cref{lemma4}, and $(e)$ stems from $|\{m: j_m \neq j_{m+1}\}| \leq P^+-1$.
    We note K the set of unique values of $\{k_m, m=1,...,M\}$,  $\underline{m}(p) = \min_{k_m=p}(m)$, and $\overline{m}(p) = \max_{k_m=p}(m)$. We have:
    \begin{gather*}
    f_j^+(\bm{x}) - f_k^+(\bm{x}) 
    =\sum_{p \in K} \left( \sum_{\substack{
      m: j_m \neq j_{m+1}, \\
      k_m=k_{m+1}=p}}  (f_{j_m,k_m}(\bm{x}) - f_{j_{m+1},k_{m+1}}(\bm{x})) \right)\\
    \underset{(f)}{=} \sum_{p \in K} \left(f_{j_{\underline{m}(p)},k_{\underline{m}(p)}}(\bm{x}) - f_{j_{{\overline{m}(p)}},k_{{\overline{m}(p)}}}(\bm{x}) \right)
    \underset{(g)}{\leq} P^- \left( \max_{g \in A^*_{\varepsilon}(S)}g(\bm{x}) - \min_{g \in A^*_{\varepsilon}(S)}g(\bm{x}) \right)
    \end{gather*}
    $(f)$ uses a telescoping sum on $\{\underline{m}(p),...,\overline{m}(p)\}$, and $(g)$ is due to \cref{lemma4} and the fact that $|K| \leq P^-$.
    As a result, 
    $$f_j^+(\bm{x}) - f_k^+(\bm{x}) \leq \min \left( P^+-1,P^- \right) \left( \max_{g \in A^*_\varepsilon(S)}g(\bm{x}) - \min_{g \in A^*_\varepsilon(S)}g(\bm{x}) \right)$$
    Similarly, the lower bound of $f_j^+(\bm{x}) - f_k^+(\bm{x})$ is proven by flipping the inequality in in $(d)$ and $(g)$. 
\end{proof}

\begin{figure}[h!]
    \centering
    \includegraphics[width=0.7\textwidth]{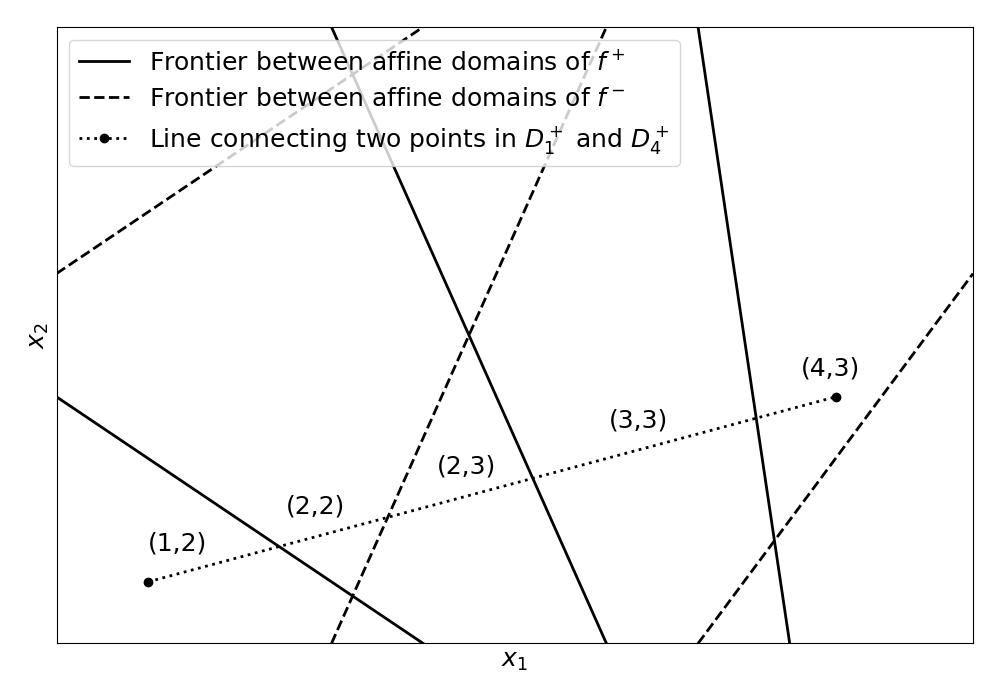}
    \caption{Example of a line connecting two points from distinct affine domains of $f^+$ for $d=2$}
    \label{fig_path_domains}
\end{figure}

\begin{theorem}
\label{theorem_bigM}
    Assuming we only consider well-behaved \gls{cpwl} solutions, the following value is a tight value for the parameter $M_n^c$ in \cref{ConvexEquation}:
    \begin{equation}
    \label{eq_bigM}
        M_n^c = \min \left( P^c-1,P^{\neg{c}} \right) \left( \max_{g \in A^*_\varepsilon(S)}g(\bm{x}_n) - \min_{g \in A^*_\varepsilon(S)}g(\bm{x}_n) \right)
    \end{equation}
\end{theorem}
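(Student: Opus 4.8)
The plan is to reduce the statement directly to \cref{lemma5}, which has already done the heavy lifting. When $\delta_{n,j}^c = 0$, the right-hand side of \cref{ConvexEquation} equals $M_n^c$, so for \emph{validity} the only thing to check is that $f^c(\bm{x}_n) - f_j^c(\bm{x}_n) \leq M_n^c$ for every well-behaved solution $f = f^+ - f^-$ and every piece index $j$. First I would write $f^c(\bm{x}_n) = \max_{l \in \{1,\dots,P^c\}} f_l^c(\bm{x}_n) = f_{l^*}^c(\bm{x}_n)$ for the maximizing index $l^*$, so that the left-hand side becomes $f_{l^*}^c(\bm{x}_n) - f_j^c(\bm{x}_n)$. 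Applying \cref{lemma5} to the pair $(l^*,j) \in \{1,\dots,P^c\}^2$ at the point $\bm{x} = \bm{x}_n$ yields $|f_{l^*}^c(\bm{x}_n) - f_j^c(\bm{x}_n)| \leq M_n^c$, hence $f^c(\bm{x}_n) - f_j^c(\bm{x}_n) \leq M_n^c$. This shows $M_n^c$ never cuts off a well-behaved solution.

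One hypothesis I would verify before invoking \cref{lemma5} is that its proof connects a point of $D_{l^*}^c$ to a point of $D_j^c$ by a segment, which requires both affine domains to be non-empty. For well-behaved solutions this is guaranteed, since \cref{theorem_number_points_per_plane} establishes that every affine domain of $f^c$ contains at least $d+1$ points. Thus the reduction is legitimate for exactly the class of solutions the theorem restricts attention to.

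For the \emph{tightness} claim, the task is to exhibit (or approach) a well-behaved solution for which $f^c(\bm{x}_n) - f_j^c(\bm{x}_n)$ equals $M_n^c$, so that no strictly smaller constant is valid. I would select $g_{\max}, g_{\min} \in A^*_\varepsilon(S)$ attaining $\max_{g \in A^*_\varepsilon(S)} g(\bm{x}_n)$ and $\min_{g \in A^*_\varepsilon(S)} g(\bm{x}_n)$, and then build a convex $f^c$ whose pieces alternate, along a segment through $\bm{x}_n$, between the values $g_{\max}(\bm{x}_n)$ and $g_{\min}(\bm{x}_n)$. The telescoping sum in the proof of \cref{lemma5} shows that each of the $\min(P^c-1, P^{\neg c})$ admissible transitions can contribute a full gap $g_{\max}(\bm{x}_n) - g_{\min}(\bm{x}_n)$, so stacking them reproduces the factor $\min(P^c-1, P^{\neg c})$ and saturates the bound.

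The main obstacle I anticipate is precisely this tightness construction: it is not enough to stack differences abstractly, one must produce a genuinely well-behaved $f = f^+ - f^-$ (each affine piece interpolating at least $d+1$ points of a perturbed data set within tolerance $\varepsilon$) whose combinatorial crossing pattern realizes the worst case counted by $\min(P^c-1, P^{\neg c})$. Reconciling the geometric requirement that the pieces become active in the claimed alternating order along the segment with the interpolation constraints defining $A^*_\varepsilon(S)$ is the delicate part; the validity half, by contrast, is an immediate corollary of \cref{lemma5}.
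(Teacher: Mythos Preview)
Your validity argument is essentially identical to the paper's: pick the index $k$ (you call it $l^*$) with $\bm{x}_n \in D_k^c$, write $f^c(\bm{x}_n) = f_k^c(\bm{x}_n)$, and invoke \cref{lemma5} on the pair $(k,j)$ at $\bm{x}=\bm{x}_n$. The paper's proof consists of exactly these three lines and nothing more.

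The difference is in how you and the paper interpret the word ``tight''. The paper treats it informally, meaning ``a valid and reasonably small'' big-$M$ constant; its proof establishes only the upper bound and makes no attempt to show the value is attained. Your proposal reads ``tight'' as ``sharp'' and accordingly spends its second half sketching a saturating construction. That construction is not required here, and --- as you yourself flag --- it is genuinely delicate (and quite possibly not achievable for every data set $S$, tolerance $\varepsilon$, and pair $(P^+,P^-)$), since you would need a single well-behaved $f$ whose crossing pattern realizes all $\min(P^c-1,P^{\neg c})$ maximal gaps at the specific point $\bm{x}_n$ while every piece still interpolates $d+1$ perturbed data points. So your extra paragraph on tightness is answering a stronger question than the theorem actually poses; your remark on non-emptiness of the $D_j^c$ is a legitimate point of care that the paper leaves implicit.
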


\begin{proof}[Proof of \cref{theorem_bigM}]
    Let $n \in \{1,...,N\}$, $c \in \{+,-\}$, $j \in \{1,...,P^c\}$. Let $k$ be such that $\bm{x}_n \in D_{k}^c$, i.e., $f^c(\bm{x}_n) = f_{k}^c(\bm{x}_n)$. Owing to \cref{lemma5}, we have:
    \begin{gather*}
    f^c(\bm{x}_n) - f_j^c(\bm{x}_n) 
    = f_{k}^c(\bm{x}_n) - f_{j}^c(\bm{x}_n)
    \leq \min \left( P^c-1,P^{\neg{c}} \right) \left( \max_{g \in A^*_\varepsilon(S)}g(\bm{x}_n) - \min_{g \in A^*_\varepsilon(S)}g(\bm{x}_n) \right)
    \end{gather*}
\end{proof}

Herein, it is assumed that the value of the parameter $M_n^c$ from \cref{ConvexEquation} is the one described in \cref{theorem_bigM}. Note that the big-M values defined in \cref{eq_bigM} are only valid when considering the solution space of well-behaved \gls{cpwl} solutions. This implies that using these values may eliminate some \gls{cpwl} solutions that are not well-behaved from the feasible region, although not necessarily all of them.

\subsection{Bounding the variables}
\label{section_bounding_variables}
We define the following parameters:

$$\underline{a_r} = \min_{g \in A^*_{\varepsilon}(S)}\frac{\partial g}{\partial x_r}, \quad 
 \overline{a_r} = \max_{g \in A^*_{\varepsilon}(S)}\frac{\partial g}{\partial x_r}$$
$$\underline{b} = \min_{g \in A^*_{\varepsilon}(S)}g(\bm{0}), \quad 
 \overline{b} = \max_{g \in A^*_{\varepsilon}(S)}g(\bm{0})$$
$$\overline{a_r'} = \min \left( P^--1,P^+ \right) \left( \overline{a_r} - \underline{a_r} \right)$$
$$\overline{b'} = \min \left( P^--1,P^+ \right) \left( \overline{b} - \underline{b} \right)$$

\begin{lemma}
\label{lemma6}
    Let $f \in CPWL^*(S,\varepsilon,P^+,P^-)$. Let $(j,k) \in \{1,...,P^+\} \times \{1,...,P^-\} : Int(D_{j}^+ \cap D_{k}^-) \neq \varnothing$. Then, we have:
     $$\underline{a_r} \leq a_{j,r}^+ - a_{k,r}^- \leq \overline{a_r}, \quad r \in \{1,...,d\}$$
     $$\underline{b} \leq b_{j}^+ - b_{k}^- \leq \overline{b}$$
\end{lemma}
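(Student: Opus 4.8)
The plan is to mimic the argument of \cref{lemma4}, replacing the linear functional $g \mapsto g(\bm{x})$ with the two linear functionals $g \mapsto \partial g/\partial x_r$ and $g \mapsto g(\bm{0})$, each of which is linear in the coefficients $(\bm{a},b)$ parametrizing the affine function $g$. The key observation is that the proof of \cref{lemma4} never exploited anything about the objective $g(\bm{x})$ beyond its linearity in $(\bm{a},b)$, so the same vertex/extreme-point reasoning transfers directly.

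First I would establish that $f_{j,k} = f_j^+ - f_k^-$ belongs to $A_\varepsilon(S)$. Since $Int(D_j^+ \cap D_k^-) \neq \varnothing$, \cref{fact2} guarantees that $D_p = D_j^+ \cap D_k^-$ is an affine domain of $f$ with $f_{|D_p} = f_{j,k}$. Because $f$ is well-behaved, $f_{j,k}$ $\varepsilon$-approximates some $(d+1)$-point subset $s \in [S]^{d+1}$, hence $f_{j,k} \in A_{\varepsilon,s} \subset A_\varepsilon(S)$, exactly as in the opening of the proof of \cref{lemma4}. Writing $f_{j,k}(\bm{x}) = (\bm{a}_j^+ - \bm{a}_k^-)^T \bm{x} + (b_j^+ - b_k^-)$, I would then identify the quantities of interest as linear functionals of $f_{j,k}$, namely $a_{j,r}^+ - a_{k,r}^- = \partial f_{j,k}/\partial x_r$ and $b_j^+ - b_k^- = f_{j,k}(\bm{0})$.

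It therefore suffices to show
$$\min_{g \in A^*_\varepsilon(S)} \frac{\partial g}{\partial x_r} \leq \frac{\partial f_{j,k}}{\partial x_r} \leq \max_{g \in A^*_\varepsilon(S)} \frac{\partial g}{\partial x_r}$$
together with the analogous double inequality for $f_{j,k}(\bm{0})$ against $g(\bm{0})$. Since $f_{j,k} \in A_\varepsilon(S)$, the corresponding bounds over $A_\varepsilon(S)$ are immediate, so the only remaining work is to show that the extrema over $A_\varepsilon(S)$ coincide with those over $A^*_\varepsilon(S)$. Here I would reuse the LP argument from \cref{lemma4}: for each fixed $s$, the feasible set $A_{\varepsilon,s}$ is the bounded polytope of affine functions $\varepsilon$-approximating $s$, and, by the general position of the $\bm{x}_{i_k}$, its vertices (extreme points) are precisely the elements of $A^*_{\varepsilon,s}$. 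Because both $\partial g/\partial x_r = a_r$ and $g(\bm{0}) = b$ are linear functionals of $(\bm{a},b)$, each is optimized over $A_{\varepsilon,s}$ at a vertex, i.e. at a point of $A^*_{\varepsilon,s}$. Taking the union over $s \in [S]^{d+1}$ then yields $\max_{g \in A_\varepsilon(S)} \partial g/\partial x_r = \max_{g \in A^*_\varepsilon(S)} \partial g/\partial x_r = \overline{a_r}$, and likewise for the minima and for the intercept functional $g(\bm{0})$.

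The main obstacle is conceptual rather than computational: one must recognize that the slope functionals $\partial g/\partial x_r$ and the intercept functional $g(\bm{0})$ play exactly the same role as the evaluation functional $g(\bm{x})$ in \cref{lemma4}, so that the identical claim ``the extrema of a linear objective over $A_{\varepsilon,s}$ are attained at the vertices, which are exactly $A^*_{\varepsilon,s}$'' applies verbatim. Once this is observed, no new estimates are needed and the bounds follow immediately from the membership $f_{j,k} \in A_\varepsilon(S)$.
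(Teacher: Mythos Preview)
Your proposal is correct and follows essentially the same approach as the paper: identify $a_{j,r}^+ - a_{k,r}^-$ and $b_j^+ - b_k^-$ as the slope and intercept functionals of $f_{j,k} \in A_\varepsilon(S)$, then reuse the LP/vertex argument from \cref{lemma4} to pass from $A_\varepsilon(S)$ to $A^*_\varepsilon(S)$. The paper's proof is slightly terser but makes exactly the same moves.
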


\begin{proof}[Proof of \cref{lemma6}]
    The partial derivatives of an affine function are equal to its linear coefficients and its value in $\bm{0}$ corresponds to its bias term. In particular, $\frac{\partial f_{j,k}}{\partial x_r}=a_{j,r}^+ - a_{k,r}^-$ and $f_{j,k}(\bm{0}) = b_{j}^+ - b_{k}^-$. Since $f_{j,k} \in A_\varepsilon(S)$, we have  
    $\min_{g \in A_\varepsilon(S)}\frac{\partial g}{\partial x_r} \leq \frac{\partial f_{j,k}}{\partial x_r} \leq \max_{g \in A_\varepsilon(S)}\frac{\partial g}{\partial x_r}$ 
    and $\min_{g \in A_\varepsilon(S)} g(\bm{0}) \leq f_{j,k}(\bm{0}) \leq \max_{g \in A_\varepsilon(S)} g(\bm{0})$. Similar to \cref{lemma5}, we can show that the extrema of $\frac{\partial g}{\partial x_r}$ and $g(\bm{0})$ on $A_{\varepsilon}(S)$ can be found on $A^*_{\varepsilon}(S)$. Let $s \in [S]^{d+1}$. $\max_{g \in A_{\varepsilon,s}}\frac{\partial g}{\partial x_r}$, $-\min_{g \in A_{\varepsilon,s}}\frac{\partial g}{\partial x_r}$, $\max_{g \in A_{\varepsilon,s}}g(\bm{0})$, $-\min_{g \in A_{\varepsilon,s}}g(\bm{0})$, are the optimal objective values of the following LP optimization problem, with $Q = a_r$, $Q = -a_r$, $Q = b$, $Q = -b$, respectively.
    $$\max Q, \quad 
    s.t. \quad -\varepsilon \leq \sum_{r=1}^d a_r x_{i_k,r} + b - z_{i_k} \leq \varepsilon, \quad (\bm{x}_{i_k},z_{i_k})\in s$$
    Therefore, the extrema of $\frac{\partial g}{\partial x_r}$ and $g(\bm{0})$ on $A_{\varepsilon,s}$ can be found on the vertices $A^*_{\varepsilon,s}$. It follows that the extrema on $A_{\varepsilon}(S)$ can be found on $A^*_{\varepsilon}(S)$.
\end{proof}

\begin{lemma}
\label{lemma7}
    Let $f \in CPWL^*(S,\varepsilon,P^+,P^-)$, $(j,k) \in \{1,...,P^-\}^2$. Then,
    $$|a_{j,r}^- - a_{k,r}^-| \leq 
    \overline{a'_r}, \quad \forall r \in \{1,...,d\}$$
    $$|b_j^- - b_k^-| \leq \overline{b'}$$
\end{lemma}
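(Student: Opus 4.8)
The plan is to mirror the proof of \cref{lemma5} almost verbatim, exchanging the roles of $f^+$ and $f^-$ and replacing the point-evaluation functional $g \mapsto g(\bm{x})$ by the gradient functional $g \mapsto \partial g / \partial x_r$ and the bias functional $g \mapsto g(\bm{0})$. The crucial observation is that \cref{lemma6} plays, for these two functionals, exactly the role that \cref{lemma4} played for point evaluation: it confines the relevant quantity of each individual affine piece $f_{j,k}$ to the interval $[\underline{a_r},\overline{a_r}]$ (for the gradient) or $[\underline{b},\overline{b}]$ (for the bias), these extrema being attained on $A^*_\varepsilon(S)$. Since the gradient component and the bias are both linear functionals of the affine functions involved, the telescoping/path argument of \cref{lemma5} transfers without modification.

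Concretely, for two affine pieces $f_j^-,f_k^-$ of $f^-$ I would pick $\bm{x}_j \in D_j^-$ and $\bm{x}_k \in D_k^-$ and, by convexity of $D$, join them by a segment $L \subseteq D$ crossing the non-empty domains $D_{j_m}^+ \cap D_{k_m}^-$, $m=1,\dots,T$, with $k_1=j$ and $k_T=k$, as in \cref{fig_path_domains}. Assuming, without loss of generality, that each step changes exactly one of the two indices, I would telescope $a_{j,r}^- - a_{k,r}^- = \sum_{m=1}^{T-1} (a_{k_m,r}^- - a_{k_{m+1},r}^-)$ and discard the vanishing terms where $k_m = k_{m+1}$. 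For each surviving step we have $j_m = j_{m+1}$, so $a_{k_m,r}^- - a_{k_{m+1},r}^-$ rewrites as $(a_{j_{m+1},r}^+ - a_{k_{m+1},r}^-) - (a_{j_m,r}^+ - a_{k_m,r}^-)$, i.e. the difference of the $r$-th gradient components of $f_{j_{m+1},k_{m+1}}$ and $f_{j_m,k_m}$. Each of these components lies in $[\underline{a_r},\overline{a_r}]$ by \cref{lemma6}, so each surviving term is bounded by $\overline{a_r}-\underline{a_r}$; since $k_m$ changes value at most $P^--1$ times, this yields the bound $(P^--1)(\overline{a_r}-\underline{a_r})$.

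To obtain the second factor I would regroup the same telescoping sum by the value of $j_m$, which takes at most $P^+$ distinct values and whose constant-index runs are gaps-free; telescoping within each run collapses it to a single difference of piece-gradients, again bounded by $\overline{a_r}-\underline{a_r}$ via \cref{lemma6}, which gives the bound $P^+(\overline{a_r}-\underline{a_r})$. Taking the smaller of the two factors produces $|a_{j,r}^- - a_{k,r}^-| \leq \min(P^--1,P^+)(\overline{a_r}-\underline{a_r}) = \overline{a_r'}$, with the lower bound obtained by flipping the inequalities exactly as in \cref{lemma5}. The bias estimate $|b_j^- - b_k^-| \leq \overline{b'}$ is identical, using $g(\bm{0})$ together with the bounds $\underline{b},\overline{b}$ in place of the gradient and $\underline{a_r},\overline{a_r}$.

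I expect the only real obstacle to be bookkeeping rather than mathematics: one must verify that the combinatorial facts underlying \cref{lemma5} — that runs of a constant index along $L$ are consecutive, that each transition changes exactly one index, and that there are at most $P^--1$ and $P^+$ distinct values of $k_m$ and $j_m$ respectively — still hold after swapping the roles of $+$ and $-$, and that the linearity of $\partial/\partial x_r$ and of evaluation at $\bm{0}$ lets the $f^+$-contributions cancel across each $f^-$-transition just as the $f^-$-contributions cancelled across each $f^+$-transition in \cref{lemma5}. Once this structural parallel is checked, no genuinely new estimate is required beyond invoking \cref{lemma6}.
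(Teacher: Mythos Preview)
Your proposal is correct and follows essentially the same approach as the paper: the paper's proof of \cref{lemma7} simply states that one repeats the telescoping-sum decomposition of \cref{lemma5} with the roles of $f^+$ and $f^-$ swapped (so that $a_{j,r}^-$, $b_j^-$ replace $f_j^+(\bm{x})$), and invokes \cref{lemma6} in place of \cref{lemma4} at the steps corresponding to $(d)$ and $(g)$. Your write-up spells out precisely this argument, including the two ways of counting transitions that yield the factor $\min(P^--1,P^+)$.
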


\begin{proof}[Proof of \cref{lemma7}]
    This is demonstrated by using the same sum decomposition as in \cref{lemma5} but for $a_{j,r}^-$, $b_j^-$ instead of $f_j^+(\bm{x})$.
    The result follows from applying \cref{lemma6} to identify a similar inequality as $(d)$ and $(g)$. 
\end{proof}

\begin{theorem}
\label{theorem_bounds}
    For well-behaved \gls{cpwl} solutions, the following equations represent valid upper and lower bounds of the linear variables:
    \begin{equation}
    \label{bounds_fxi}
        z_i - \varepsilon \leq f(\bm{x}_i) \leq z_i + \varepsilon, \quad 
        i \in \{1,...,N\}
    \end{equation}
    \begin{equation}
    \label{bounds_fximinus}
        0 \leq f^-(\bm{x}_i) \leq M_i^-, \quad 
        i \in \{1,...,N\}
    \end{equation}
    \begin{equation}
    \label{bounds_fxiplus}
        z_i - \varepsilon \leq f^+(\bm{x}_i) \leq z_i + \varepsilon + M_i^-, \quad 
        i \in \{1,...,N\}
    \end{equation}
    \begin{equation}
    \label{bounds_ajrm}
        -\overline{a'_r} \leq a_{j,r}^- \leq \overline{a'_r} , \quad 
        j \in \{1,...,P^-\}, \quad r \in \{2,...,d\}
    \end{equation}
    \begin{equation}
    \label{bounds_aj1m}
        0 \leq a_{j,1}^- \leq \overline{a'_1}, \quad j \in \{1,...,P^-\}
    \end{equation}
    \begin{equation}
    \label{bounds_ajrp}
        \underline{a_r} -\overline{a'_r} \leq a_{j,r}^+ \leq \overline{a_r} + \overline{a'_r}, \quad 
        j \in \{1,...,P^+\}, \quad r \in \{2,...,d\}
    \end{equation}
    \begin{equation}
    \label{bounds_aj1p}
        \underline{a_1} \leq a_{j,1}^+ \leq \overline{a_1} + \overline{a'_1}, \quad j \in \{1,...,P^+\}
    \end{equation}
    \begin{equation}
    \label{bounds_bjm}
        -\overline{b'} \leq b_{j}^- \leq \overline{b'} ,
        \quad j \in \{1,...,P^-\}
    \end{equation}
    \begin{equation}
    \label{bounds_bjp}
        \underline{b}-\overline{b'} \leq b_{j}^+ \leq \overline{b} + \overline{b'} ,
        \quad j \in \{1,...,P^+\}
    \end{equation}
\end{theorem}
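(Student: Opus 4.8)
The plan is to derive the three value bounds \cref{bounds_fxi,bounds_fximinus,bounds_fxiplus} first, then the coefficient and bias bounds for $f^-$, and finally use the latter to bound the coefficients and biases of $f^+$. Throughout, I assume the earlier tightening constraints \cref{first_plane_zero} (fixing $\bm{a}_1^- = \bm{0}$, $b_1^- = 0$) and \cref{plane_ordering} (sorting pieces by ascending first coordinate) are in force, since these are precisely what make several of the bounds sharp. For \cref{bounds_fxi} I would simply combine the fitting constraint \cref{FittingEquation} with the error bound \cref{ErrorBound} to get $|f(\bm{x}_i) - z_i| \leq e_i \leq \varepsilon$. For the lower bound of \cref{bounds_fximinus}, note that \cref{first_plane_zero} makes $f_1^- \equiv 0$, so $f^-(\bm{x}_i) = \max_k f_k^-(\bm{x}_i) \geq f_1^-(\bm{x}_i) = 0$; the upper bound $f^-(\bm{x}_i) \leq M_i^-$ follows from \cref{lemma5} (case $c=``-"$) applied to the active piece at $\bm{x}_i$ compared against the zero piece $f_1^-$, which is exactly the value $M_i^-$ of \cref{theorem_bigM}. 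Then \cref{bounds_fxiplus} follows by rewriting \cref{DCequation} as $f^+(\bm{x}_i) = f(\bm{x}_i) + f^-(\bm{x}_i)$ and substituting the bounds just obtained.

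Next I would bound the coefficients and biases of $f^-$. Since \cref{first_plane_zero} fixes $a_{1,r}^- = 0$ and $b_1^- = 0$, applying \cref{lemma7} with the choice $k=1$ immediately gives $|a_{j,r}^-| \leq \overline{a'_r}$ and $|b_j^-| \leq \overline{b'}$, yielding \cref{bounds_ajrm,bounds_bjm} and the upper half of \cref{bounds_aj1m}. The sharper lower bound $a_{j,1}^- \geq 0$ in \cref{bounds_aj1m} is the one place where the sorting constraint matters: \cref{plane_ordering} enforces $a_{1,1}^- \leq a_{j,1}^-$, and \cref{first_plane_zero} sets $a_{1,1}^- = 0$, so every $a_{j,1}^-$ is nonnegative.

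For the $f^+$ coefficients and biases the key tool is \cref{lemma6}, which bounds the differences $a_{j,r}^+ - a_{k,r}^-$ and $b_j^+ - b_k^-$ whenever $Int(D_j^+ \cap D_k^-) \neq \varnothing$. For each fixed $j$ such a $k$ exists: exactly as in the proof of \cref{theorem_number_points_per_plane}, $Int(D_j^+) \neq \varnothing$ together with $D = \bigcup_k D_k^-$ (\cref{fact1}) forces a nonempty-interior intersection with some $D_k^-$. Writing $a_{j,r}^+ = (a_{j,r}^+ - a_{k,r}^-) + a_{k,r}^-$ and combining \cref{lemma6} with the $f^-$ bounds established above yields \cref{bounds_ajrp,bounds_aj1p,bounds_bjp}. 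The asymmetry between \cref{bounds_ajrp} and \cref{bounds_aj1p} mirrors that of the $f^-$ bounds: using $a_{k,1}^- \geq 0$ in place of $a_{k,1}^- \geq -\overline{a'_1}$ sharpens the lower bound of $a_{j,1}^+$ from $\underline{a_1} - \overline{a'_1}$ to $\underline{a_1}$.

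The main obstacle I anticipate is bookkeeping rather than a conceptual difficulty: one must consistently track which tightening constraints are invoked for each bound, and in particular keep straight that the sharper first-coordinate bounds rely on \cref{first_plane_zero,plane_ordering} jointly, whereas the remaining-coordinate bounds need only \cref{first_plane_zero}. The only step requiring an argument beyond direct substitution is verifying, via the partition property of \cref{fact1}, that every affine piece $f_j^+$ admits a compatible index $k$ with $Int(D_j^+ \cap D_k^-) \neq \varnothing$ so that \cref{lemma6} can be applied.
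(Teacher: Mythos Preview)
Your proposal is correct and follows essentially the same route as the paper's proof: both derive \cref{bounds_fxi,bounds_fximinus,bounds_fxiplus} from the fitting constraints, \cref{first_plane_zero}, and \cref{lemma5}; obtain the $f^-$ coefficient and bias bounds by applying \cref{lemma7} with $k=1$ together with \cref{first_plane_zero,plane_ordering}; and then lift these to the $f^+$ bounds via \cref{lemma6} after choosing a $k$ with $Int(D_j^+\cap D_k^-)\neq\varnothing$. Your explicit justification of the existence of such a $k$ via \cref{fact1} is a detail the paper leaves implicit, so in that respect your write-up is slightly more complete.
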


\begin{proof}[Proof of \cref{theorem_bounds}]
    The bounds from \cref{bounds_fxi} derive from \cref{FittingEquation,ErrorBound}. \Cref{bounds_fximinus} is based on \cref{first_plane_zero}, using $f^-(\bm{x}_i) = \max(f_1^-(\bm{x}_i),f_2^-(\bm{x}_i), ..., f_{P^-}^-(\bm{x}_i)) = \max(0,f_2^-(\bm{x}_i), ..., f_{P^-}^-(\bm{x}_i)) \geq 0$. In addition, $\exists k \in \{1,...,P^-\} : \bm{x}_i \in D_k^-$. Thus, according to \cref{lemma5}, $f^-(\bm{x}_i) = f_k^-(\bm{x}_i) = f_k^-(\bm{x}_i) - f_1^-(\bm{x}_i) \leq M_i^-$. \Cref{bounds_fxiplus} derives from \cref{bounds_fxi,bounds_fximinus} combined with $f^+(\bm{x}_i) = f(\bm{x}_i) + f^-(\bm{x}_i)$. Using \cref{first_plane_zero} and \cref{lemma7}, $|a_{j,r}^-| = |a_{j,r}^- - a_{1,r}^-| \leq \overline{a'_r}$, leading to \cref{bounds_ajrm}. \Cref{bounds_aj1m} arises from \cref{plane_ordering}: $a_{j,1}^- \geq \hdots \geq a_{1,1}^- \geq 0$. Next, combining \cref{bounds_ajrm}, \cref{lemma6}, and $k$ such that $Int(D_j^+ \cap D_k^-) \neq \varnothing$, we deduce: $a_{j,r}^+ = a_{j,r}^+ - a_{k,r}^- + a_{k,r}^-$ and $\underline{a_r} - \overline{a'_r} \leq a_{j,r}^+ - a_{k,r}^- + a_{k,r}^- \leq \overline{a_r} + \overline{a'_r}$, leading to \cref{bounds_ajrp}. \Cref{bounds_aj1p} stems from \cref{bounds_aj1m,bounds_ajrp}. Further, applying \cref{first_plane_zero} and \cref{lemma7}, we deduce $|b_{j}^-| = |b_{j}^- - b_{1}^-| \leq \overline{b}$, resulting in \cref{bounds_bjm}. Finally, \cref{bounds_bjp} stems from \cref{bounds_bjm}, \cref{lemma6}, and $k$ such that $Int(D_j^+ \cap D_k^-) \neq \varnothing$.
\end{proof}

Note that the bounding values from \cref{bounds_fximinus,bounds_fxiplus,bounds_ajrm,bounds_aj1m,bounds_ajrp,bounds_aj1p,bounds_bjm,bounds_bjp} are only valid when considering the solution space of well-behaved \gls{cpwl} solutions. This implies that using these bounding values may eliminate some \gls{cpwl} solutions that are not well-behaved from the feasible region, although not necessarily all of them. In addition, \cref{bounds_ajrm,bounds_aj1m,bounds_ajrp,bounds_aj1p,bounds_bjm,bounds_bjp} apply owing to \cref{first_plane_zero,plane_ordering}. This implies that using these values may eliminate some DC representations of the \gls{cpwl} solutions but not the \gls{cpwl} solutions themselves.

\begin{proposition}
\label{time_complexity}
    The time complexity for calculating the bounds and big-M parameters is $O(2^{d+1}N^{d+2}d)$.
\end{proposition}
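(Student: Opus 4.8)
The plan is to reduce every quantity introduced in \cref{section_bigM,section_bounding_variables} to an optimization over the finite set $A^*_\varepsilon(S)$ and then count arithmetic operations. The key observation is that each target parameter is an extremum over $A^*_\varepsilon(S)$ of a \emph{linear} functional of $g$: the big-M value $M_n^c$ in \cref{eq_bigM} depends only on $\max_{g \in A^*_\varepsilon(S)} g(\bm{x}_n)$ and $\min_{g \in A^*_\varepsilon(S)} g(\bm{x}_n)$; the coefficient and bias bounds $\underline{a_r}, \overline{a_r}, \underline{b}, \overline{b}$ are the extrema over the same set of the functionals $g \mapsto \partial g / \partial x_r$ and $g \mapsto g(\bm{0})$; and the derived quantities $\overline{a_r'}, \overline{b'}$ together with the factors $\min(P^c-1, P^{\neg{c}})$ are then obtained by $O(1)$ arithmetic afterward. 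Hence it suffices to enumerate $A^*_\varepsilon(S)$ once, maintaining running maxima and minima of these functionals.

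Next I would bound the enumeration cost. By \cref{number_affine_functions}, $|A^*_\varepsilon(S)| \leq \binom{N}{d+1} 2^{d+1} = O(2^{d+1} N^{d+1})$, and each element arises by choosing a subset $s \in [S]^{d+1}$ together with a sign pattern in $\{-\varepsilon, +\varepsilon\}^{d+1}$ and solving the corresponding $(d+1)\times(d+1)$ interpolation system, whose solution is unique by \cref{fact_linear_interpolation}. The device that makes the stated bound achievable is sharing work across sign patterns: for each of the $\binom{N}{d+1}$ subsets I would factor (or invert) its coefficient matrix once at cost $O(d^3)$, and then recover each of the $2^{d+1}$ affine functions by a single matrix–vector product at cost $O(d^2)$. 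This produces all coefficient vectors in time $O\big(N^{d+1}(d^3 + 2^{d+1} d^2)\big)$.

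Finally I would account for the evaluations. Once a function's coefficient vector is in hand, its partial derivatives and its value $g(\bm{0})$ are read off directly, so updating the running extrema for the variable bounds costs $O(d)$ per function; evaluating $g$ at the $N$ data points $\bm{x}_n$, which is what the big-M parameters require, costs $O(Nd)$ per function. Summing over all $O(2^{d+1} N^{d+1})$ functions gives a dominant evaluation cost of $O(2^{d+1} N^{d+2} d)$, so the total is $O\big(N^{d+1} d^3 + 2^{d+1} N^{d+2} d\big)$.

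The main obstacle is purely bookkeeping: checking that the matrix-factorization term $O(N^{d+1} d^3)$ is absorbed into $O(2^{d+1} N^{d+2} d)$. Since the standing assumption gives $N \geq d+1 \geq 1$ and since $2^{d+1} \geq d^2$ for every $d \geq 1$, we have $2^{d+1} N \geq d^2$, whence $N^{d+1} d^3 \leq 2^{d+1} N^{d+2} d$; the point-evaluation step is therefore the bottleneck. Equivalently, factoring once per subset and reusing it across the $2^{d+1}$ sign patterns is precisely what averts a naive $O(2^{d+1} N^{d+1} d^3)$ blow-up, leaving the claimed $O(2^{d+1} N^{d+2} d)$.
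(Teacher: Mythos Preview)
Your argument is correct and follows essentially the same route as the paper's proof in \cref{proof_time_complexity}: enumerate $A^*_\varepsilon(S)$ by inverting the $(d{+}1)\times(d{+}1)$ interpolation matrix once per subset $s\in[S]^{d+1}$ and reusing it across the $2^{d+1}$ sign patterns, then identify the $N$-point evaluation step for the big-$M$ values as the dominant $O(2^{d+1}N^{d+2}d)$ term. The only notable difference is bookkeeping: the paper absorbs the $O(N^{d+1}d^3)$ inversion cost by invoking ``$N\gg d$'', whereas you give an explicit inequality ($2^{d+1}\ge d^2$ together with $N\ge d+1$) that does the job under the paper's standing assumption alone, which is a slight sharpening but not a different idea.
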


\begin{proof}[Proof of \cref{time_complexity}]
    The proof of this proposition is based upon the number of elements in $A_{\varepsilon}^*(S)$ and the time complexity of inverting a matrix in $M_{d+1}(\mathbb{R})$. A more detailed proof is provided in \cref{proof_time_complexity}.
\end{proof}

\Cref{table_summary} gives a summary of the tightening strategies, together with their impact on the feasible region and the preprocessing involved. The two sets of equations marked by ``\dag'' are alternative to one another, the first set being dominated by the second set but involving less constraints and variables than the second set. The symbol ``*'' in the number of additional constraints indicates the number of additional variable bounds.

\begin{table}
\caption{Summary of tightening strategies}
\centering
\scriptsize
\label{table_summary}
{\begin{tabular}[c]{m{0.3cm} m{2.1cm} m{2.6cm} m{1.4cm} m{2.5cm} m{2.1cm} m{1.8cm}}
  \hline
  \raggedright \textbf{Eq.} 
  & \raggedright \textbf{Description}
  & \raggedright \textbf{\# of new constraints or *variable bounds} 
  & \raggedright \textbf{\# of new variables}
  & \raggedright \textbf{Impact on search region}
  & \raggedright \textbf{Preprocessing}
  & \raggedright \textbf{Time complexity}
  \arraybackslash \\
  \hline
  \eqref{first_plane_zero} 
  & \raggedright Fix one affine piece 
  & \makecell[cl]{0 \\ *$d+1$}
  & 0 
  & \raggedright Eliminate $d+1$ dimensions
  & None
  &  \\
  \hline
  \eqref{plane_ordering} 
  & \raggedright Sort the affine pieces 
  & $P^++P^--2$
  & 0 
  & \raggedright Reduce the feasible region by a factor of $P^+!P^-!$
  & None 
  &  \\
  \hline
  \eqref{number_points_per_plane}$^{\dag}$
  & \raggedright Impose $d+1$ points per affine piece of $f^c$
  & $P^++P^-$ 
  & 0 
  & \raggedright Eliminate some non-well-behaved CPWL solutions
  & None 
  &  \\
  \hline
    \eqref{beta_delta_plus}-\eqref{gamma_bounds}$^{\dag}$
  & \raggedright Impose $d+1$ points per affine piece of $f$ 
  & \raggedright $P^+P^-(4N+1)$ 
  & \raggedright $P^+P^-(N+1)$ 
  & \raggedright Eliminate all non-well-behaved CPWL solutions 
  & None 
  &  \\
  \hline
   \eqref{eq_bigM}
   & \raggedright Tighten the big-M parameters 
   & 0 
   & 0 
   & \raggedright Eliminate some non-well-behaved CPWL solutions 
   & \raggedright Compute all $g \in A_\varepsilon^*(S)$, evaluate $g$ in $N$ points, compute extrema for each point 
   & $O(2^{d+1}N^{d+2}d)$ \\
  \hline
  \eqref{bounds_fxi}-\eqref{bounds_bjp}
  & Bound the variables
  & \makecell[cl]{0 \\ *$3N + $\\$(P^++P^-)(d+1)$}
  & 0 
  & \raggedright Bound the search space in all variable dimensions
  & \raggedright For all $g \in A_\varepsilon^*(S)$, evaluate the extrema of the coefficients of $g$  
  & $O(2^{d+1}N^{d+1}d)$ \\
  \hline
\end{tabular}}
\end{table}

\begin{remark}
Note that applying all equations presented in \cref{tighteningsection} is a valid tightening of the \gls{milp} problem to the feasible region of the well-behaved \gls{cpwl} solutions. As a result, applying any subset of equations from \cref{tighteningsection} is also a valid tightening. For example, although the variable bounds from \cref{section_bounding_variables} derive from fixing $f_1^-=0$ and sorting the affine pieces of $f^+$ and $f^-$, applying the variable bounds without sorting or fixing any affine piece is still a valid tightening of the \gls{milp} problem. As a matter of fact, any combination of the tightening strategies described here is a valid tightening. This fact is leveraged in \cref{section_experiments} to identify effective combinations of tightening strategies.
\end{remark}

\section{Computational experiments}
\label{section_experiments}

This section illustrates the computational performances of the tightening strategies. In section \cref{dataset_description}, the data sets used to assess the efficiency of the tightening strategies are described. In section \cref{combination_strategies}, different combinations of the tightening strategies are evaluated to identify the ones that have the most impact on the solution time. In addition, the computational performance of the tightening procedure is assessed. 

The algorithms are implemented in Python (3.12.7) and the \gls{milp} problems are solved with the Gurobi solver (12.0.0) using the gurobipy package. Models are run on an Intel 2.3-GHz machine with 16 cores and 32 GB of RAM. For all \gls{milp} problems, the relative optimality gap (MIPGap) is set to $10^{-6}$. In addition, the primal feasibility tolerance (FeasibilityTol) and integer feasibility tolerance (IntFeasTol) are both set to their minimum value of $10^{-9}$ to mitigate numerical errors.

The objective function used for all \gls{milp} problems is the maximum error, \textit{i.e.}, \cref{eq_max_error}.

\subsection{Description of the data sets}
\label{dataset_description}
To assess the efficiency of the proposed method, six data sets are used. The data sets are summarized in \cref{data_description}. Four of the data sets are derived from mathematical functions and two data sets are based on real-world data. Data sets based on mathematical functions are generated by randomly selecting a certain number of $\bm{x}$ points in a given domain of $\mathbb{R}^d$. The first four data sets from \cref{data_description} are two-dimensional and are illustrated by the red dots in \cref{fig_case_studies}. The last two data sets are three-dimensional. The third data set represents the water-to-power conversion factor ($z$) at the Crystal hydropower plant based on the forebay elevation ($x_1$) and average daily water release ($x_2$) from the years 2014-2024 (\cite{bureau_of_reclamation_hdb_2024}). The fourth data set represents the discharge temperature of a gas compressor ($z$) based on the volumetric flow rate ($x_1$) and rotation speed ($x_2$) (\cite{marfatia_data-driven_2022}). Data sets were rescaled in all dimensions to prevent numerical issues during the \gls{milp} solving process.

\begin{table}[h!]
    \centering
    \caption{Data sets used for numerical experiments}
    \label{data_description}
    \begin{tabular}{@{}lrr@{}}
        \toprule
        \textbf{Data set} & \textbf{Number of data points} & \textbf{Dimension $d$} \\
        \midrule
        $z = x_2 \sin(x_1)$, $(x_1, x_2) \in [0, \pi] \times [0, 1]$ & 121 & 2 \\
        $z = x_1^2 - x_2^2$, $(x_1, x_2) \in [-1, 1]^2$ & 64 & 2 \\
        Crystal power plant \cite{bureau_of_reclamation_hdb_2024} & 116 & 2 \\
        Gas compressor \cite{marfatia_data-driven_2022} & 102 & 2 \\
        $z = x_1^2 + x_2^2 + x_3^2$, $(x_1, x_2, x_3) \in [0, 1]^3$ & 64 & 3 \\
        $z = x_1 x_2 x_3$, $(x_1, x_2, x_3) \in [0, 1]^3$ & 64 & 3 \\
        \bottomrule
    \end{tabular}
\end{table}

\begin{figure}[h!]
    \centering
    \includegraphics[width=1.0\textwidth]{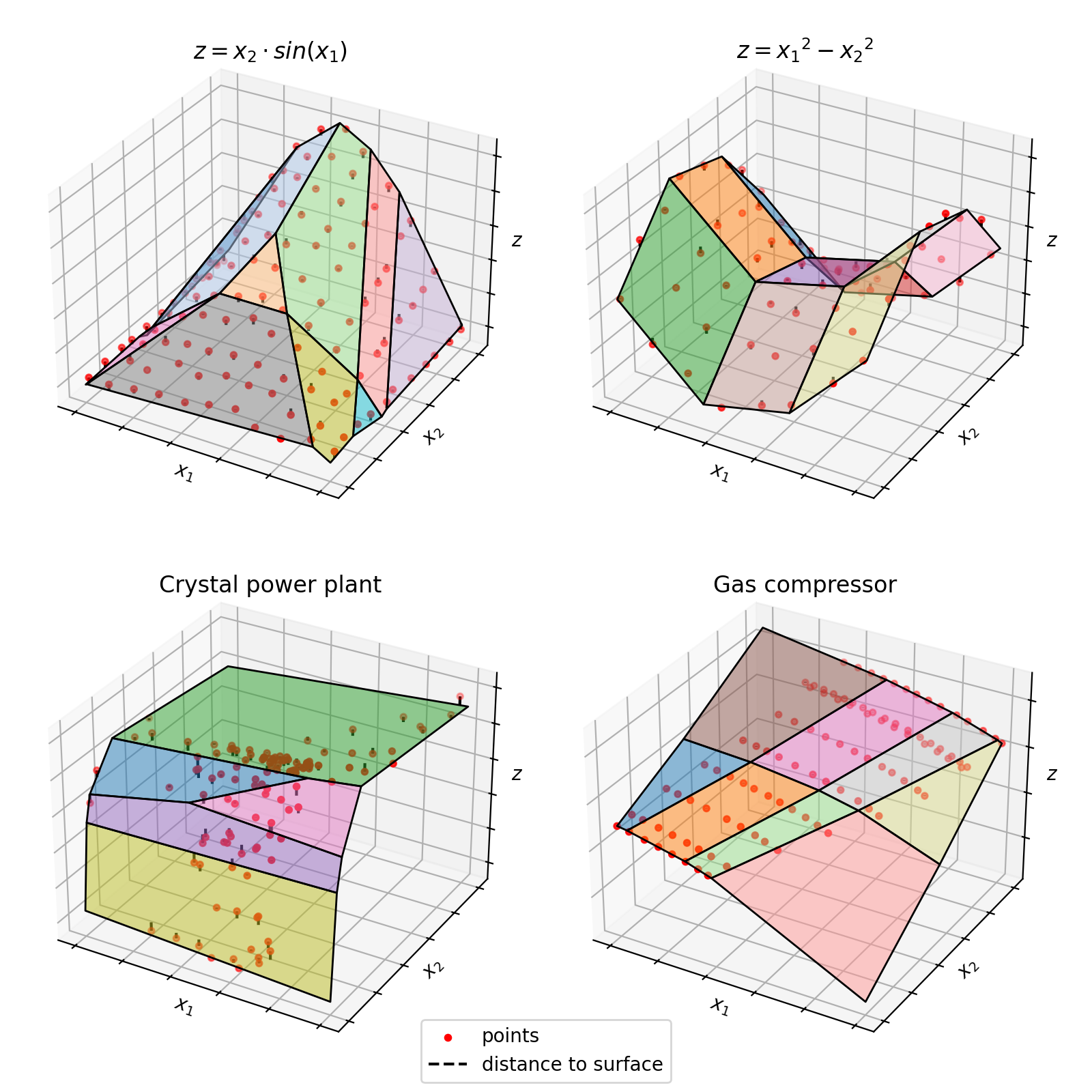}
    \caption{Optimal CPWL fitting for the two-dimensional data sets}
    \label{fig_case_studies}
\end{figure}

\begin{figure}[h!]
    \centering
    \includegraphics[width=1.0\textwidth]{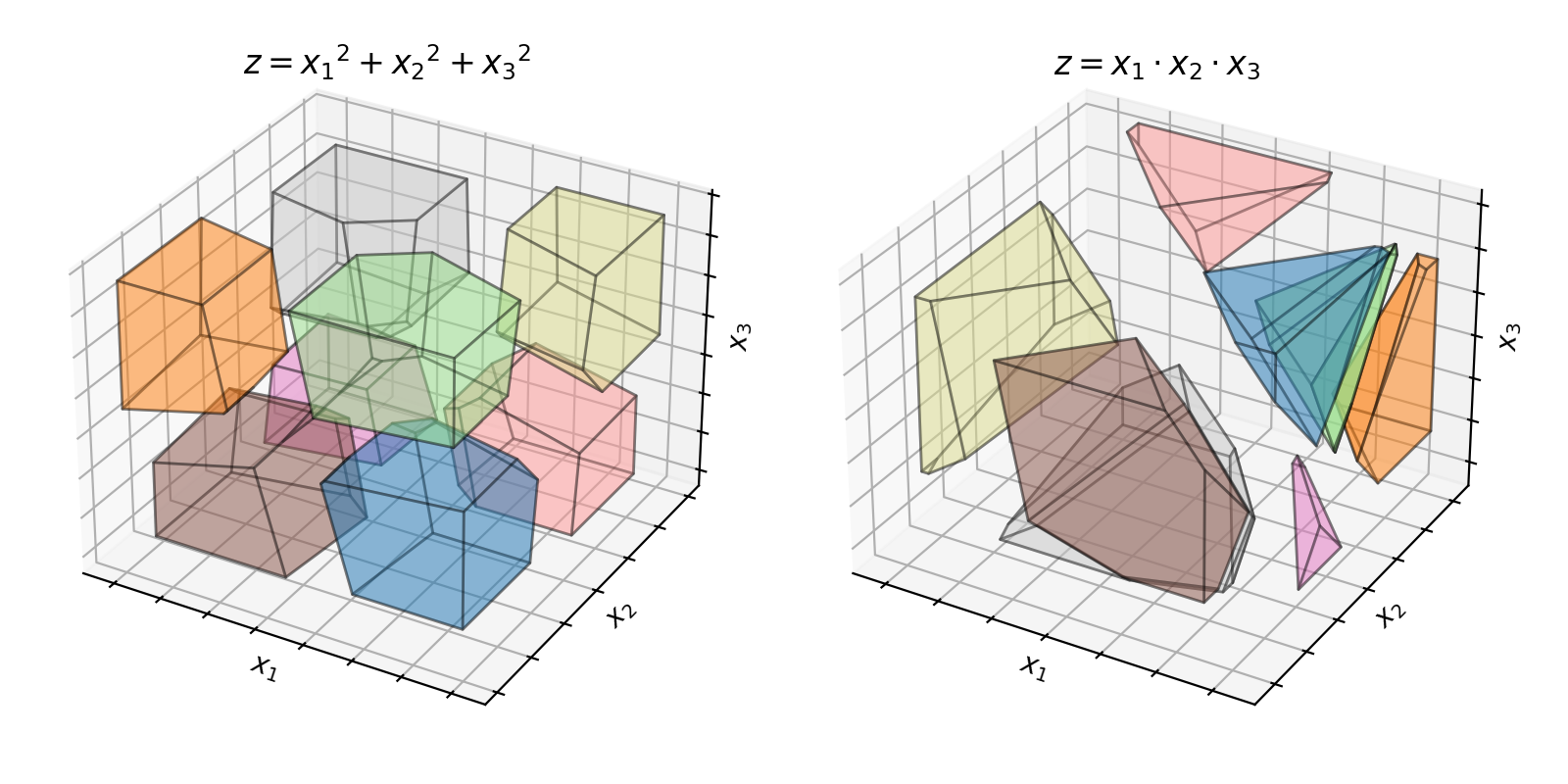}
    \caption{Exploded view of the affine domains for the three-dimensional data sets}
    \label{fig_case_studies2}
\end{figure}

\subsection{Evaluating different combinations of tightening strategies}
\label{combination_strategies}

There are several ways to apply the tightening strategies to the \gls{milp} problem:
\begin{itemize}
    \item The first affine piece of $f^-$ may or may not be set to $0$ (\cref{first_plane_zero}).
    \item The affine pieces of $f^+$ and $f^-$ may or may not be sorted (\cref{plane_ordering}).
    \item We can impose $d+1$ points either for each affine piece of $f^+$ and $f^-$ (\cref{number_points_per_plane}), for each affine piece of $f$ (\cref{beta_delta_plus,beta_delta_minus,beta_delta_plus_minus,gamma_beta,sum_beta_gamma}), or for no affine piece.
    \item For the big-M parameters, we can either use an indicator constraint (\cref{ConvexEquationIndicator}), a reasonably large (``default") big-M value, or the tight big-M values (\cref{eq_bigM}).
    \item The variables may or may not be bounded (\cref{bounds_aj1m,bounds_ajrm,bounds_bjm,bounds_aj1p,bounds_ajrp,bounds_bjp,bounds_fxi,bounds_fximinus,bounds_fxiplus}).
    
\end{itemize}

In total, there are up to 72 combinations of tightening strategies. Due to this large number, only eleven combinations were selected. The selected combinations, referred to as ``C1'' to ``C11'', are described in \cref{tight_combinations}. The ``default" big-M value is set equal to the rounded-up maximum value of $M^c_i$, \textit{e.g.}, 700 if the maximum value is 632.8. For each data set, the eleven combinations of tightening strategies are applied to the \gls{milp} problem before solving it, and the solution time is measured for each combination. The same number $P^+$ and $P^-$ of candidate affine pieces and the same upper bound $\varepsilon$ for the maximum error are used across all combinations. The time limit to solve the \gls{milp} problem is set to 7200 seconds. Numerical results are summarized in \cref{MILP_results}. The table describes the computation times for each data set, including the computation time of the preprocessing step and the \gls{milp} solving time of each combination of tightening strategies. The symbol ``*'' indicates that the time limit was reached before meeting the target optimality gap. Bold numbers indicate the two shortest solving times for each data set across all combinations.

\begin{table}[h!]
    \centering
    \footnotesize
    \begin{threeparttable}
        \caption{Combinations of tightening strategies assessed.}
        \label{tight_combinations}
        \begin{tabularx}{\textwidth}{@{}l >{\RaggedRight}X >{\RaggedRight}X >{\RaggedRight}X >{\RaggedRight}X >{\RaggedRight}X@{}}
            \toprule
            \textbf{Combination} & \textbf{Fix one affine piece} & \textbf{Sort the affine pieces} & \textbf{Impose $d+1$ points per affine piece} & \textbf{Big-M parameters} & \textbf{Bounded variables} \\
            \midrule
            C1 & & & & Indicator & \\
            \midrule
            C2 & & & & Default & \\
            \midrule
            C3 & & & & Tight & \\
            \midrule
            C4 & \checkmark & & & Tight & \\
            \midrule
            C5 & \checkmark & \checkmark & & Tight & \\
            \midrule
            C6 & \checkmark & \checkmark & & Tight & \checkmark \\
            \midrule
            C7 & \checkmark & \checkmark & of $f^+$ and $f^-$ & Tight & \checkmark \\
            \midrule
            C8 & \checkmark & \checkmark & of $f$ & Tight & \checkmark \\
            \midrule
            C9 & & & of $f^+$ and $f^-$ & Tight & \checkmark \\
            \midrule
            C10 & & & of $f$ & Tight & \checkmark \\
            \midrule
            C11 & \checkmark & & of $f^+$ and $f^-$ & Tight & \checkmark \\
            \bottomrule
        \end{tabularx}
    \end{threeparttable}
\end{table}

\begin{table}
\caption{Summary of computation times for each data set and combination. \label{MILP_results}}
\centering
\scriptsize
{\begin{tabular}[c]{m{1.6cm} m{0.8cm} m{0.3cm} m{1.2cm} m{0.5cm} m{0.5cm} m{0.5cm} m{0.5cm} m{0.5cm} m{0.5cm} m{0.5cm} m{0.5cm} m{0.5cm} m{0.5cm} m{0.5cm} m{0.5cm}}
  \hline
  \multirow{2}{4em}{\textbf{Data set}}
  & \multirow{2}{4em}{$\bm{P^+}, \bm{P^-}$}
  & \multirow{2}{4em}{$\bm{\varepsilon}$}
  & \multicolumn{12}{c}{\textbf{Computation time (s)}} \\
  & & & \raggedright \textbf{Prepro- cessing} & \textbf{C1} & \textbf{C2} & \textbf{C3} & \textbf{C4} & \textbf{C5} & \textbf{C6} & \textbf{C7} & \textbf{C8} & \textbf{C9} & \textbf{C10} & \textbf{C11} \\
  \hline
  \raggedright $z = x_2 \cdot sin(x_1)$
  & 2, 6  
  & 0.2 
  & 9   
  & 7200*      
  & 7200*        
  & 7200*        
  & 7200*        
  & 7200*        
  & 7200*        
  & 7200*        
  & 7200*         
  & \textbf{1877}        
  & 7200*        
  & \textbf{2554}  \\
  \hline
  $z = {x_1}^2 - {x_2}^2$  
  & 3, 3 
  & 0.1 
  & 1 
  & 7200* 
  & 576.7 
  & \textbf{112.8} 
  & 116.9
  & 507.8
  & 382.2
  & 406.4
  & 2752
  & \textbf{98.3}
  & 516.6 
  & 200.2 \\
  \hline
  Crystal power plant
  & 1, 5
  & 0.2 
  & 8 
  & 197.6
  & \textbf{8.7} 
  & 9.9 
  & \textbf{8.6}
  & 72.0
  & 42.0
  & 32.3
  & 31.5
  & 11.4
  & 9.7
  & 20.8 \\
  \hline
    \raggedright Gas compressor
  & 2, 4
  & 0.2 
  & 5 
  & 7200* 
  & 7200* 
  & 1584 
  & 2090
  & 847.4
  & 1356
  & 944.9
  & 7200*
  & \textbf{457.2}
  & 3720 
  & \textbf{303.2} \\
  \hline
  $z = {x_1}^2 + {x_2}^2 + {x_3}^2$
  & 8, 1
  & 0.1 
  & 37 
  & 7200* 
  & 3402 
  & 1441
  & \textbf{111.0}
  & 7200*
  & 7200*
  & 7200*
  & 7200*
  & 531.1
  & 478.3
  & \textbf{159.5} \\
  \hline
  $z = {x_1} \cdot {x_2} \cdot {x_3}$
  & 2, 4 
  & 0.1 
  & 38 
  & 7200* 
  & 2740 
  & \textbf{460.3}
  & 558.3
  & 2496
  & 1968
  & 974.2
  & 7200*
  & 698.1
  & 4390
  & \textbf{438.1} \\
  \hline
\end{tabular}}
\end{table}

For each data set, the optimal objective value of the \gls{milp} problem is identical across all combinations. However, solution times noticeably vary across combinations. Results show that there is a clear benefit of using tight big-M parameter values (C3) as opposed to using a default big-M parameter value (C2) or an indicator constraint (C1). Fixing the first affine piece of $f^-$ (C4) without imposing $d+1$ points per affine piece does not seem to have a significant additional benefit, except for the case ``$z={x_1}^2+{x_2}^2+{x_3}^2$". Although it reduces the feasible region, sorting the affine pieces of $f^-$ and $f^+$ has a negative impact on the solution time (C5 to C8). This might be due to the fact that these additional constraints increase the solution time of the relaxed LP problem in each node of the \gls{milp} search tree without improving the tree search itself, \textit{i.e.}, without constraining the binary variables. Due to its negative impact, the strategy of sorting the affine pieces is discarded from the rest of the combinations. Imposing $d+1$ points per affine piece of $f^+$ and $f^-$ and bounding the variables (C9 and C11) has a clear positive impact on the solution time. This positive effect can be explained by the reduction in the feasible region of the binary variables $\delta_{i,j}^c$ using few additional constraints ($P^+ + P^-$). Although this feasible region is further reduced when imposing $d+1$ points per affine piece of $f$ (C10), this constraint has a negative impact on the solution time. This might be due to the larger number of additional variables and constraints required to model the affine pieces of $f$. The only difference between combinations C9 and C11 is whether the first affine piece of $f^-$ is being fixed. Numerical results show that adding this strategy may either improve or worsen the solution time depending on the data set, \textit{i.e.}, there is not a clear systematic benefit in applying this strategy. When comparing the best solution time of C9 and C11 to the solution time of C2, the reduction factor in solving time ranges from 4 to 23, except for the Crystal power plant case. For the worst solution time, the reduction factor ranges from 3 to 16. Overall, the following combination consistently performs among the best ones: imposing $d+1$ points for each affine piece of $f^+$ and $f^-$, (\cref{number_points_per_plane}), tightening the big-M parameters (\cref{eq_bigM}), fixing the first affine piece of $f^-$ (\cref{first_plane_zero}), and bounding all variables (\cref{bounds_aj1m,bounds_ajrm,bounds_bjm,bounds_aj1p,bounds_ajrp,bounds_bjp,bounds_fxi,bounds_fximinus,bounds_fxiplus}).

The computation time of the preprocessing step, used to calculate the big-M parameters and variable bounds, is also included in \cref{MILP_results}. The computation time of this step is negligible for two-dimensional data sets. However, the computation time becomes significant for three-dimensional data sets. This is due to the fact that the time complexity of the preprocessing steps increases exponentially with the dimension of the data set (\cref{table_summary}). Yet, the combined execution time of preprocessing and solving the tightened \gls{milp} problem is still shorter than the time required to solve the non-tightened \gls{milp} problem, demonstrating the efficiency of the tightening approach in reducing overall computational effort.

\section{Conclusions}
In this paper, we formalize the concept of well-behaved \gls{cpwl} interpolations, a class of \gls{cpwl} interpolations in which each affine piece interpolates a number of points greater than the dimension of the interpolated data set. Next, we demonstrate that any \gls{cpwl} interpolation has a well-behaved version. Then, we introduce several tightening strategies aimed at improving the solution time of the \gls{milp} formulation of the \gls{cpwl}-fitting problem for data sets in general dimensions. Some of the tightening strategies leverage the fact that any \gls{cpwl} interpolation has a well-behaved version. In addition, we analyze each tightening strategy in terms of additional constraints, additional variables, impact on the feasible region, and time complexity of the preprocessing step. Then, we identify the combinations of tightening strategies that have the most impact on the solution time of the \gls{milp} problem. Experimental results show that the tightening procedure significantly reduces the solution time of the \gls{milp} problem in most cases. However, the solution time reduction factor can vary significantly depending on the size and dimension of the data set, with values ranging from 3 to 23 across five of the six case studies. In addition, the computation time of the preprocessing step rapidly increases with the dimension of the data set, making the tightening procedure computationally challenging for high-dimensional data sets ($d>3$). This issue can be partially addressed using parallelization, as the preprocessing step is inherently suited for parallel processing. 

The theoretical work presented here applies to data points in general position (with any subset of $d+1$ points affinely independent in $\mathbb{R}^d$). There is a need to address the special case where the points are not in general position, \textit{e.g.}, if the points are located on a lattice. Research work should also focus on improving the time complexity of the preprocessing algorithms for high dimensions. In addition, future research should aim to conduct a comprehensive comparison between the \gls{milp} and \gls{nn} approaches with respect to computation time, number of affine pieces, and approximation error. 

\section*{Acknowledgements}

This work was authored for the Department of Energy (DOE) Office of Energy Efficiency and Renewable Energy by Argonne National Laboratory, operated by UChicago Argonne LLC under contract number DE-AC02-06CH11357. This study was supported by the HydroWIRES Initiative of DOE’s Water Power Technologies Office.

\bibliographystyle{alpha} 
\bibliography{references.bib}

\newpage

\begin{quote}
    \footnotesize
    The submitted manuscript has been created by UChicago Argonne, LLC, Operator of Argonne National Laboratory (“Argonne”). Argonne, a U.S. Department of Energy Office of Science laboratory, is operated under Contract No. DE-AC02-06CH11357. The U.S. Government retains for itself, and others acting on its behalf, a paid-up nonexclusive, irrevocable worldwide license in said article to reproduce, prepare derivative works, distribute copies to the public, and perform publicly and display publicly, by or on behalf of the Government.  The Department of Energy will provide public access to these results of federally sponsored research in accordance with the DOE Public Access Plan. http://energy.gov/downloads/doe-public-access-plan 
\end{quote}

\newpage

\section*{Appendix}

\appendix

\section{Alternative objective functions for $MILP1(Q)$}
\label{alternative_objectives}

The following objective functions $Q$ may be considered when solving the problem $MILP1(Q)$:

\begin{equation}
\label{eq_nber_planes}
    Q = \sum_{j=1}^{P^+} \sum_{k=1}^{P^-} \gamma_{j,k}
\end{equation}
\begin{equation}
\label{eq_nber_planes_pm}
    Q = \sum_{j=1}^{P^c} \alpha_j^c
\end{equation}
\begin{equation}
\label{eq_nber_planes_p}
    \alpha_j^+ \geq \gamma_{j,k}, \quad j \in \{1,...,P^+\}, \quad k \in \{1,...,P^-\}
\end{equation}
\begin{equation}
\label{eq_nber_planes_m}
    \alpha_k^- \geq \gamma_{j,k}, \quad j \in \{1,...,P^+\}, \quad k \in \{1,...,P^-\}
\end{equation}
\begin{equation}
\label{eq_hierarchical}
    Q = Q_1 + \frac{1}{2\varepsilon}Q_2
\end{equation}

Objective \eqref{eq_nber_planes} represents the number of affine pieces of the \gls{cpwl} solution. Objective \eqref{eq_nber_planes_pm} represents the number of affine pieces of $f^c$, and requires using the additional variables $\alpha_j^c$ and \cref{eq_nber_planes_p,eq_nber_planes_m}. Objective \eqref{eq_hierarchical} formulates a hierarchical optimization problem, with $Q_1$ being the number of affine pieces of $f$, $f^+$, or $f^-$, and $Q_2$ being the average or maximum error. Authors in \cite{ploussard_piecewise_2024} show that the choice of coefficients in \eqref{eq_hierarchical} results in a hierarchical optimization problem where $Q_1$ and $Q_2$ are co-minimized but minimizing $Q_1$ takes priority over minimizing $Q_2$.

\section{Additional tightening strategies}
\label{section_other_strategies}
The following tightening strategies complement the six tightening strategies described in the main paper. However, their impact on the feasible region is unclear and they do not seem to have a significant impact on the solution time.

\subsection{Using the convexity of $f^c$}
\label{section_convexity_function}
\begin{fact}
\label{fact5}
    Let $f:D \rightarrow \mathbb{R}$ be a convex function that is piecewise differentiable. Let $\bm{x}_1, \bm{x}_2 \in D$. Let $\bm{g}_1$ and  $\bm{g}_2$ be subgradients of $\bm{x}_1$ and $\bm{x}_2$, respectively. Then, the following inequality holds: $(\bm{x}_2-\bm{x}_1)^T(\bm{g}_2 - \bm{g}_1) \geq 0$.
\end{fact}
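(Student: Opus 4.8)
The plan is to invoke the defining property of a subgradient---the subgradient inequality---at each of the two points and then combine the two resulting inequalities. Recall that if $\bm{g}$ is a subgradient of the convex function $f$ at a point $\bm{x}$, then by definition $f(\bm{y}) \geq f(\bm{x}) + \bm{g}^T(\bm{y} - \bm{x})$ for all $\bm{y} \in D$. The piecewise differentiability assumption guarantees that such subgradients exist at $\bm{x}_1$ and $\bm{x}_2$: at points of differentiability the subgradient is the unique gradient, while at a kink separating affine pieces the subdifferential is the convex hull of the neighboring gradients. Hence the argument is well-posed.

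First, I would apply the subgradient inequality at $\bm{x}_1$ with subgradient $\bm{g}_1$, evaluating it at $\bm{y} = \bm{x}_2$, to obtain $f(\bm{x}_2) \geq f(\bm{x}_1) + \bm{g}_1^T(\bm{x}_2 - \bm{x}_1)$. Symmetrically, applying the inequality at $\bm{x}_2$ with subgradient $\bm{g}_2$ and $\bm{y} = \bm{x}_1$ yields $f(\bm{x}_1) \geq f(\bm{x}_2) + \bm{g}_2^T(\bm{x}_1 - \bm{x}_2)$. Adding these two inequalities cancels the function values $f(\bm{x}_1)$ and $f(\bm{x}_2)$ appearing on both sides, leaving $0 \geq \bm{g}_1^T(\bm{x}_2 - \bm{x}_1) + \bm{g}_2^T(\bm{x}_1 - \bm{x}_2)$.

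Finally, I would factor out the common difference $\bm{x}_2 - \bm{x}_1$: since $\bm{g}_2^T(\bm{x}_1 - \bm{x}_2) = -\bm{g}_2^T(\bm{x}_2 - \bm{x}_1)$, the inequality rewrites as $0 \geq -(\bm{g}_2 - \bm{g}_1)^T(\bm{x}_2 - \bm{x}_1)$, which is exactly the claimed $(\bm{x}_2 - \bm{x}_1)^T(\bm{g}_2 - \bm{g}_1) \geq 0$ (the transpose order being irrelevant for a scalar inner product). There is no genuine obstacle here, as this is the classical monotonicity of the subdifferential operator; the only point requiring a moment's care is confirming that the subgradient inequality is the correct characterization to apply to a possibly non-smooth convex \gls{cpwl} function---which it is, directly from the definition of a subgradient.
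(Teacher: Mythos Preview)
Your argument is correct and is the standard proof of monotonicity of the subdifferential: two applications of the subgradient inequality followed by addition. The paper itself does not supply a proof at all---it records this as a known fact and cites \cite{boyd_convex_2004}---so there is nothing further to compare.
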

This inequality is a generalization of the monotonicity condition of the gradient of differentiable convex functions to piecewise differentiable functions (\cite{boyd_convex_2004}).
\begin{theorem}
\label{theorem_monoticity}
\textit{The following equation serves as a valid tightening constraint of $MILP1(Q)$}: 
\begin{equation}
\label{eq_monoticity}
\begin{aligned}
    (\bm{a}_j^c - \bm{a}_k^c)^T(\bm{x}_p - \bm{x}_q) &\geq -M_{p,q}^c \left(2 - \delta_{p,j}^c - \delta_{q,k}^c\right), \\
    \text{where } \quad p,q &\in \{1,\dots,N\}, \quad j,k \in \{1,\dots,P^c\}, \quad c \in \{+,-\},
    \quad p &\neq q, \quad j < k
\end{aligned}
\end{equation}
\end{theorem}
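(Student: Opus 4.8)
The plan is to recognize \cref{eq_monoticity} as a big-M encoding of the subgradient monotonicity property stated in \cref{fact5}, applied to the convex piecewise-linear functions $f^c$. The central observation is that the coefficient vector $\bm{a}_j^c$ is a valid subgradient of $f^c$ at any data point $\bm{x}_p$ for which $\delta_{p,j}^c = 1$. First I would establish this directly from the formulation: \cref{ConvexEquation0} gives $f^c(\bm{x}_p) \geq {\bm{a}_j^c}^T \bm{x}_p + b_j^c$ for every $j$, while $\delta_{p,j}^c = 1$ combined with \cref{ConvexEquation} forces the equality $f^c(\bm{x}_p) = {\bm{a}_j^c}^T \bm{x}_p + b_j^c$. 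Since $f^c$ is the pointwise maximum of its affine pieces, the piece attaining this maximum at $\bm{x}_p$ certifies that its gradient $\bm{a}_j^c$ is a subgradient of $f^c$ at $\bm{x}_p$; the same reasoning applies to $\bm{a}_k^c$ at $\bm{x}_q$ when $\delta_{q,k}^c = 1$.

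With the subgradient identity in hand, I would instantiate \cref{fact5} with $\bm{x}_1 = \bm{x}_q$, $\bm{x}_2 = \bm{x}_p$, $\bm{g}_1 = \bm{a}_k^c$, and $\bm{g}_2 = \bm{a}_j^c$. Whenever $\delta_{p,j}^c = \delta_{q,k}^c = 1$, monotonicity yields exactly $(\bm{a}_j^c - \bm{a}_k^c)^T(\bm{x}_p - \bm{x}_q) \geq 0$, which coincides with \cref{eq_monoticity} because the right-hand side $-M_{p,q}^c(2 - \delta_{p,j}^c - \delta_{q,k}^c)$ vanishes in this case. I would emphasize that this argument holds for every feasible CPWL solution, not only well-behaved ones, since $f^c$ is convex by construction; moreover, points lying on a domain boundary belong to several domains under the inequality form of \cref{numberplaneperpoint}, and each associated coefficient is a genuine subgradient, so the inequality remains valid for every active index pair.

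It then remains to verify the big-M relaxation when at least one indicator is zero. In that case $2 - \delta_{p,j}^c - \delta_{q,k}^c \geq 1$, so the constraint is non-binding provided $(\bm{a}_j^c - \bm{a}_k^c)^T(\bm{x}_p - \bm{x}_q) \geq -M_{p,q}^c$ holds unconditionally. Expanding the inner product as $\sum_{r=1}^d (a_{j,r}^c - a_{k,r}^c)(x_{p,r} - x_{q,r})$ and bounding each coefficient difference by $\overline{a'_r}$ from \cref{lemma7} (and its symmetric analogue for $c = +$), a valid choice is $M_{p,q}^c = \sum_{r=1}^d \overline{a'_r}\,|x_{p,r} - x_{q,r}|$. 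As with the tight parameters of \cref{section_bigM}, this value is only valid over the well-behaved solution space; a looser, globally valid $M_{p,q}^c$ follows from any a priori coefficient bound. Finally I would justify the index ranges: the case $j = k$ makes both sides zero and needs no constraint, and restricting to $j < k$ loses nothing, because the complementary configuration $\bm{x}_p \in D_k^c,\ \bm{x}_q \in D_j^c$ is recovered from the same family by swapping the ordered pair $(p,q) \mapsto (q,p)$, which is precisely why all ordered pairs with $p \neq q$ are retained.

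The main obstacle is less the algebra than the bookkeeping: cleanly verifying that the subgradient identity survives the inequality (rather than equality) form of \cref{numberplaneperpoint}, so that boundary points belonging to multiple domains do not break validity, and confirming that the chosen $M_{p,q}^c$ leaves the constraint genuinely slack in every case where an indicator is zero, so that no feasible well-behaved CPWL solution is inadvertently cut off.
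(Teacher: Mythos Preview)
Your proposal is correct and follows essentially the same route as the paper: both proofs identify $\bm{a}_j^c$ as a subgradient of the convex function $f^c$ at any $\bm{x}_p$ with $\delta_{p,j}^c=1$, invoke \cref{fact5} to obtain $(\bm{a}_j^c-\bm{a}_k^c)^T(\bm{x}_p-\bm{x}_q)\ge 0$ when both indicators are active, and then wrap this in a big-M constraint. The only incidental difference is that you propose a coordinate-wise bound $M_{p,q}^c=\sum_r \overline{a'_r}\,|x_{p,r}-x_{q,r}|$ (requiring a $c=+$ analogue of \cref{lemma7}), whereas the paper defers the choice to a separate result (\cref{theorem_bigM2}) and sets $M_{p,q}^c=M_p^c+M_q^c$; since the theorem itself only requires an ``appropriately large'' value, this does not affect the validity of your argument.
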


\begin{proof}[Proof of \cref{theorem_monoticity}]
    Let $f$ be a \gls{cpwl} solution of $MILP1(Q)$ and $f= f^+ - f^-$ be a DC representation. $f^+$ and $f^-$ are convex piecewise differentiable functions. Let $\bm{x}_p \in D_j^c$ and $\bm{x}_q \in D_k^c$. $\bm{a}_j^c$ and $\bm{a}_k^c$ are subgradients of $f^c$ at $\bm{x}_p$ and $\bm{x}_q$, respectively. By applying \cref{fact5}, it follows that $(\bm{x}_p \in D_j^c) \wedge (\bm{x}_q \in D_k^c) \Rightarrow (\bm{x}_p-\bm{x}_q)^T(\bm{a}_j^c - \bm{a}_k^c) \geq 0$. This can be expressed as \cref{eq_monoticity} using indicator variables $\delta_{p,j}^c$ and $\delta_{q,k}^c$ and an appropriately large value for the big-M parameter $M_{p,q}^c$. 
\end{proof}

\begin{theorem}
\label{theorem_bigM2}
    For well-behaved \gls{cpwl} solutions, the following value is a valid tightening value for the big-M parameter $M_{p,q}^c$:
    \begin{equation}
    \label{eq_bigM2}
        M_{p,q}^c = M_p^c + M_q^c
    \end{equation}
\end{theorem}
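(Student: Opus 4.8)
The plan is to recognize that, per the big-M logic of \cref{theorem_monoticity}, the parameter $M_{p,q}^c$ only needs to be large enough that the constraint becomes slack whenever $2 - \delta_{p,j}^c - \delta_{q,k}^c \geq 1$, i.e. whenever at least one of the two indicator variables is $0$. In that regime the right-hand side is at most $-M_{p,q}^c$, so it suffices to exhibit a uniform lower bound $(\bm{a}_j^c - \bm{a}_k^c)^T(\bm{x}_p - \bm{x}_q) \geq -M_{p,q}^c$ valid for every well-behaved solution. I would therefore prove the stronger unconditional two-sided estimate $|(\bm{a}_j^c - \bm{a}_k^c)^T(\bm{x}_p - \bm{x}_q)| \leq M_p^c + M_q^c$, which immediately yields both the lower bound needed here and reassurance that the constraint never cuts a feasible well-behaved solution except when both indicators equal $1$.

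The key step is a purely algebraic rewriting of the inner product. Since $f_j^c(\bm{x}) = {\bm{a}_j^c}^T \bm{x} + b_j^c$, expanding $(\bm{a}_j^c - \bm{a}_k^c)^T(\bm{x}_p - \bm{x}_q)$ and substituting causes the bias terms $b_j^c$ and $b_k^c$ to cancel, leaving
$$(\bm{a}_j^c - \bm{a}_k^c)^T(\bm{x}_p - \bm{x}_q) = \left(f_j^c(\bm{x}_p) - f_k^c(\bm{x}_p)\right) - \left(f_j^c(\bm{x}_q) - f_k^c(\bm{x}_q)\right).$$
This regrouping is the crux: it isolates, at each of the two data points separately, a difference of two affine pieces of $f^c$, precisely the quantity bounded by \cref{lemma5}.

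I would then apply the triangle inequality to split the right-hand side and invoke \cref{lemma5} at $\bm{x}_p$ and at $\bm{x}_q$ respectively. Since $f \in CPWL^*(S,\varepsilon,P^+,P^-)$ is well-behaved, \cref{lemma5} gives $|f_j^c(\bm{x}_p) - f_k^c(\bm{x}_p)| \leq M_p^c$ and $|f_j^c(\bm{x}_q) - f_k^c(\bm{x}_q)| \leq M_q^c$, where $M_n^c$ is exactly the big-M value of \cref{eq_bigM}. Combining these yields $|(\bm{a}_j^c - \bm{a}_k^c)^T(\bm{x}_p - \bm{x}_q)| \leq M_p^c + M_q^c$, and hence the desired lower bound with $M_{p,q}^c = M_p^c + M_q^c$.

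I do not expect a serious obstacle: the argument reduces to one clean substitution followed by the triangle inequality and a direct appeal to \cref{lemma5}. The only point requiring care is the algebraic rewriting — one must group the four scalar terms so that each bracket is a difference of the \emph{same} pair of pieces evaluated at a \emph{single} point (rather than, say, the same piece at two different points), since only that grouping matches the hypothesis of \cref{lemma5}. Once the bias cancellation is observed, the remainder is routine.
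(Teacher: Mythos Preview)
Your proposal is correct and follows essentially the same approach as the paper: rewrite $(\bm{a}_j^c - \bm{a}_k^c)^T(\bm{x}_p - \bm{x}_q)$ as $\bigl(f_j^c(\bm{x}_p) - f_k^c(\bm{x}_p)\bigr) - \bigl(f_j^c(\bm{x}_q) - f_k^c(\bm{x}_q)\bigr)$ and apply \cref{lemma5} at $\bm{x}_p$ and $\bm{x}_q$ separately. The paper presents only the one-sided lower bound, while you prove the two-sided estimate via the triangle inequality, but the argument is otherwise identical.
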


\begin{proof}[Proof of \cref{theorem_bigM2}]
    Let $p,q \in \{1,...,N\}$, $j,k \in \{1,...,P^c\}$. Owing to \cref{lemma5}, we have:
    \begin{gather*}
    (\bm{a}_j^c - \bm{a}_k^c)^T (\bm{x}_p-\bm{x}_q)
    = {\bm{a}_j^c}^T(\bm{x}_p-\bm{x}_q) - {\bm{a}_k^c}^T(\bm{x}_p-\bm{x}_q)\\
    = (f_j^c(\bm{x}_p) - f_j^c(\bm{x}_q)) - (f_k^c(\bm{x}_p) - f_k^c(\bm{x}_q))\\
    = (f_j^c(\bm{x}_p) - f_k^c(\bm{x}_p)) - (f_j^c(\bm{x}_q) - f_k^c(\bm{x}_q))
    \geq -M_p^c - M_q^c
    \end{gather*} 
\end{proof}

The number of additional constraints introduced by \cref{eq_monoticity} is $N(N-1)(P^++P^--2)/2$.

\subsection{Using the convexity of the affine domains}
Let $\Delta^n(i_1,...,i_{n+1})$ denote the $n$-simplex formed by the vertices $\bm{x}_{i_1},...,\bm{x}_{i_{n+1}} \in \mathbb{R}^d$, for all $n \leq d$. In particular, $\Delta^1(i_1,i_2)$ represents the line segment connecting points $\bm{x}_{i_1}$ and $\bm{x}_{i_2}$. Let $\{D_j\}_{j=1,...,P}$ be a partition of $D$ into convex regions.

\begin{lemma}
\label{lemma_point_in_simplex}
    If a point $\bm{x}$ belongs to the interior of a $d$-simplex $\Delta^d$, and all the vertices of $\Delta^d$ belong to the same convex region $D_j$, then $\bm{x}$ also belongs to $D_j$.
\end{lemma}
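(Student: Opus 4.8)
The plan is to exploit two elementary facts: that a $d$-simplex is precisely the convex hull of its $d+1$ vertices, and that a convex set is closed under taking convex combinations of its points. First I would recall that $\Delta^d(i_1,\dots,i_{d+1}) = Conv(\{\bm{x}_{i_1},\dots,\bm{x}_{i_{d+1}}\})$, i.e., every point of the simplex (interior or boundary) can be written as a convex combination $\sum_{\ell=1}^{d+1}\lambda_\ell \bm{x}_{i_\ell}$ with $\lambda_\ell \geq 0$ and $\sum_\ell \lambda_\ell = 1$.

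Next I would invoke the hypothesis that all vertices $\bm{x}_{i_1},\dots,\bm{x}_{i_{d+1}}$ belong to the same convex region $D_j$. By the definition of convexity, any convex combination of finitely many points of $D_j$ again lies in $D_j$; applying this to the vertices yields $Conv(\{\bm{x}_{i_1},\dots,\bm{x}_{i_{d+1}}\}) \subseteq D_j$, that is, $\Delta^d \subseteq D_j$. Since $\bm{x}$ is assumed to lie in the interior of $\Delta^d$, we have in particular $\bm{x} \in \Delta^d \subseteq D_j$, which is the desired conclusion.

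The main (and only) subtle point is purely definitional rather than technical: one must confirm that ``convex region'' here means a convex set in the usual sense, so that closure under convex combinations is available. Beyond that, there is no real obstacle. I would also remark that the interior hypothesis is in fact stronger than necessary—the argument shows that \emph{every} point of $\Delta^d$, not merely its interior points, lies in $D_j$—so the lemma could be stated for arbitrary points of the simplex. The restriction to interior points presumably reflects how the lemma will later be applied rather than any need in the proof itself.
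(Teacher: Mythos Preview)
Your argument is correct and is exactly the paper's approach: the paper simply states that by convexity $\Delta^d \subseteq D_j$, hence $\bm{x} \in D_j$. Your expansion via convex combinations and your remark that the interior hypothesis is unnecessarily strong are both valid elaborations of this one-line proof.
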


\begin{proof}[Proof of \cref{lemma_point_in_simplex}]
    By the definition of convexity, $\Delta^d$ is entirely contained within $D_j$. Therefore, $\bm{x}$ also belongs to $D_j$. 
\end{proof}

\begin{lemma}
\label{lemma_line_cross_face}
    Assume that a line segment $\Delta^1$ crosses a $(d-1)$-simplex $\Delta^{d-1}$, no vertex of $\Delta^1$ is a vertex of $\Delta^{d-1}$, and the vertices of $\Delta^{d-1}$ all belong to the same convex region $D_j$. Also assume that a vertex of $\Delta^1$ belongs to $D_k$, with $D_k \neq D_j$. Then, the two vertices of $\Delta^1$ belong to different convex regions.
\end{lemma}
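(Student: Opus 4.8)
The plan is to argue by contradiction, leveraging the convexity of the partition cells exactly as in \cref{lemma_point_in_simplex}. First I would name the two endpoints of $\Delta^1$ as $\bm{v}_1$ and $\bm{v}_2$, with $\bm{v}_1 \in D_k$, and let $\bm{p}$ be a point where $\Delta^1$ crosses $\Delta^{d-1}$. Since all $d$ vertices of $\Delta^{d-1}$ lie in the convex region $D_j$, the convex hull of those vertices, namely $\Delta^{d-1}$ itself, is contained in $D_j$; hence $\bm{p} \in D_j$. This is the same one-line convexity containment used in \cref{lemma_point_in_simplex}, now applied to the $(d-1)$-simplex rather than a $d$-simplex.

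Next I would suppose, toward a contradiction, that $\bm{v}_1$ and $\bm{v}_2$ lie in a common region $D_m$. Because $\bm{v}_1 \in D_k$ and the $D_m$ form a partition, the common region must be $D_k$, so both endpoints lie in $D_k$. Convexity of $D_k$ then gives $\Delta^1 \subseteq D_k$, and in particular the crossing point satisfies $\bm{p} \in D_k$. Combined with the previous step, $\bm{p} \in D_j \cap D_k$ with $D_j \neq D_k$, which is the relation I must show is impossible.

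The delicate point, and the main obstacle, is that distinct cells of the partition are convex but may share lower-dimensional boundary faces, so $D_j \cap D_k$ is not empty a priori and a mere shared-boundary touch would not be contradictory. This is exactly where the hypothesis that $\Delta^1$ \emph{crosses} $\Delta^{d-1}$, together with the non-degeneracy assumptions (no vertex of $\Delta^1$ is a vertex of $\Delta^{d-1}$, and the points are in general position), must be used. Crossing means the segment passes through the relative interior of $\Delta^{d-1}$ with sub-segments on both sides of the hyperplane $\mathrm{aff}(\Delta^{d-1})$; the no-shared-vertex condition guarantees $\bm{p}$ is an interior point of the open segment, and general position places the data-point endpoints in the interiors of their cells, so convexity forces $\bm{p} \in \mathrm{int}(D_k)$. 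Then a full neighborhood of $\bm{p}$ lies in $D_k$, while $\bm{p}$ also belongs to $D_j$; since $\bm{p}$ is a boundary point of the full-dimensional convex set $D_j$, nearby interior points of $D_j$ lie in that neighborhood, so $\mathrm{int}(D_j) \cap \mathrm{int}(D_k) \neq \varnothing$, contradicting the disjointness of the interiors of distinct partition cells.

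I would therefore conclude that the assumption was false, so $\bm{v}_1$ and $\bm{v}_2$ belong to different convex regions. The routine parts are the two convexity containments $\Delta^{d-1} \subseteq D_j$ and $\Delta^1 \subseteq D_k$; the step requiring care is translating ``crosses'' into a genuine interior overlap rather than a permissible boundary contact, which is precisely what the transversality of the crossing and the general-position hypotheses secure.
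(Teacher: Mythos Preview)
Your overall contradiction setup matches the paper's: name the crossing point $\bm{p}$, use convexity to place $\bm{p}\in D_j$ via $\Delta^{d-1}\subset D_j$, assume both endpoints lie in $D_k$, and conclude $\bm{p}\in D_j\cap D_k$. You also correctly flag the delicate point, namely that $D_j\cap D_k$ may be a nonempty boundary face, so membership alone is not yet a contradiction.

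The gap is in how you resolve that delicate point. You assert that ``general position places the data-point endpoints in the interiors of their cells,'' and from this deduce $\bm{p}\in\mathrm{int}(D_k)$. That assertion is false under the lemma's hypotheses and in the paper's intended application. General position is a statement about affine independence of the data points $\bm{x}_i$ relative to one another; it says nothing about where those points sit relative to the cells $D_j^c$, which are the affine domains of the \emph{solution} $f^c$ and can perfectly well have a facet passing through a data point. A one-dimensional instance already shows this: with data at $x=0,1,2$ and a two-piece CPWL interpolant breaking at $x=1$, the middle data point lies on the common boundary of the two cells. Without $\bm{v}_1,\bm{v}_2\in\mathrm{int}(D_k)$ you cannot conclude $\bm{p}\in\mathrm{int}(D_k)$, and the interior-overlap contradiction collapses.

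The paper closes the argument differently, without needing interior membership of the endpoints. From $\bm{p}\in\partial D_j\cap\partial D_k$ it invokes the supporting hyperplane theorem at $\bm{p}$ for $D_j$; since $\Delta^{d-1}\subset D_j$ is a $(d-1)$-dimensional set through $\bm{p}$ lying entirely in one closed half-space, the supporting hyperplane must be $\mathrm{aff}(\Delta^{d-1})$. Then $D_k$ lies in the opposite half-space, yet both endpoints of $\Delta^1$ are assumed in $D_k$ while the crossing forces them to straddle $\mathrm{aff}(\Delta^{d-1})$, yielding the contradiction. This is the step your proposal is missing: a separating/supporting-hyperplane argument that works even when the endpoints sit on cell boundaries.
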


\begin{proof}[Proof of \cref{lemma_line_cross_face}]
    Let $\bm{x}$ be the point of intersection of $\Delta^{d-1}$ and $\Delta^1$. $\bm{x}$ is distinct from the vertices of $\Delta^{d-1}$ or $\Delta^1$. We will prove by contradiction that the two vertices of $\Delta^1$ cannot belong to $D_k$. Assume that both vertices of $\Delta^1$ belong to $D_k$. By convexity of $D_k$, it follows that $\Delta^1 \subset D_k \Rightarrow \bm{x} \in D_k$. Similarly, since $\Delta^{d-1} \subset D_j \Rightarrow \bm{x} \in D_j$. As a result, $\bm{x} \in D_j \cap D_k$. Since $\{D_j\}_{j=1,...,P}$ is a partition of $D$, $Int(D_j) \cap Int(D_k) = \varnothing$. Thus, $\bm{x}$ must belong to the common boundary of $D_j$ and $D_k$, i.e. $\bm{x} \in \partial D_j \cap \partial D_k$. According to the supporting hyperplane theorem \cite{boyd_convex_2004}, there exists a hyperplane containing $\bm{x}$, with $D_j$ entirely contained in one of the two half-spaces bounded by the hyperplane. Since $\Delta^{d-1} \subset D_j$, the supporting hyperplane of $D_j$ must be the affine span of $\Delta^{d-1}$. This implies that $D_j$ can only reside on one side of the affine hyperplane generated by $\Delta^{d-1}$. Consequently, $\Delta^{d-1} \subset \partial D_j$. Thus, $D_k$ must be on the opposite side of the hyperplane. However, both vertices of $\Delta^1$ are assumed to belong to $D_k$ but must reside on opposite sides of the hyperplane due to $\Delta^1$ crossing $\Delta^{d-1}$. This results is a contradiction. Therefore, the two vertices of $\Delta^1$ must belong to different convex regions. 
\end{proof}

\begin{theorem}
\label{theorem_convex_domain}
    The following equations represent valid tightening constraints of $MILP1(Q)$:
    \begin{equation}
    \label{eq13}
        \begin{aligned}
            \sum_{n=1}^{d+1}{\delta_{p_n,j}^c} - d \leq \delta_{q,j}^c, \\ p_1,...,p_{d+1},q \in \{1,...,N\}: 
            \\ q \notin \{p_1,...,p_{d+1}\},
            \quad \bm{x}_q \in \Delta^d(p_1,...,p_{d+1}),
            \quad j \in \{1,...,P^c\}, \quad c \in \{+,-\}
        \end{aligned}
    \end{equation}
    \begin{equation}
    \label{eq14}
        \begin{aligned}
            \sum_{n=1}^{d}{\delta_{p_n,j}^c} - d + \delta_{q_1,k}^c + \delta_{q_2,k}^c \leq 1 , \\p_1,...,p_d,q_1,q_2 \in \{1,...,N\}: 
            \\ \{p_1,...,p_d\} \cap \{q_1,q_2\} = \varnothing,
            \quad \Delta^{d-1}(p_1,...,p_d) \cap \Delta^1(q_1,q_2) \neq \varnothing,
            \\ j,k \in \{1,...,P^c\}: \quad j \neq k, \quad c \in \{+,-\}
        \end{aligned}
    \end{equation}
\end{theorem}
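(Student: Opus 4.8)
The plan is to recognize that \eqref{eq13} and \eqref{eq14} are simply the indicator-variable encodings of \cref{lemma_point_in_simplex} and \cref{lemma_line_cross_face}, respectively, applied to the convex partition $\{D_j^c\}_{j=1}^{P^c}$ of $D$ guaranteed by \cref{fact1}. I would work with the semantics that $\delta_{i,j}^c=1$ is permitted exactly when $\bm{x}_i\in D_j^c$ (this follows from combining \cref{ConvexEquation0,ConvexEquation}), so that proving validity reduces to exhibiting, for an arbitrary \gls{cpwl} solution $f=f^+-f^-$, an assignment of the $\delta$-variables that is feasible in $MILP1(Q)$ and additionally satisfies \eqref{eq13}--\eqref{eq14}.

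For \eqref{eq13} I would argue by cases on $\sum_{n=1}^{d+1}\delta_{p_n,j}^c$. If this sum is at most $d$, the left-hand side is non-positive and the inequality holds since $\delta_{q,j}^c\geq 0$. The only binding case is when all $d+1$ indicators equal $1$, i.e. every vertex of $\Delta^d(p_1,\dots,p_{d+1})$ lies in the convex region $D_j^c$. Here I would first invoke general position: since $q\notin\{p_1,\dots,p_{d+1}\}$, the point $\bm{x}_q$ cannot lie on any facet of the simplex (that would force $d+1$ points to be affinely dependent), hence $\bm{x}_q\in Int\bigl(\Delta^d(p_1,\dots,p_{d+1})\bigr)$. \Cref{lemma_point_in_simplex} then gives $\bm{x}_q\in D_j^c$, so $\delta_{q,j}^c=1$ is feasible and both sides equal $1$.

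For \eqref{eq14} I would isolate the only potentially violating configuration: $\sum_{n=1}^{d}\delta_{p_n,j}^c=d$ together with $\delta_{q_1,k}^c=\delta_{q_2,k}^c=1$, i.e. the whole facet $\Delta^{d-1}(p_1,\dots,p_d)$ assigned to $D_j^c$ and both endpoints of $\Delta^1(q_1,q_2)$ assigned to $D_k^c$ with $j\neq k$. By convexity the facet lies in $D_j^c$, and the disjointness of the index sets ensures no endpoint of the segment is a facet vertex, so the geometric hypotheses of \cref{lemma_line_cross_face} are met as soon as $D_j^c$ and $D_k^c$ are genuinely distinct regions. In that case the lemma forces the two endpoints into different regions, contradicting both lying in $D_k^c$; thus the configuration cannot occur and the inequality holds.

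The main obstacle is the degenerate situation in which \cref{defCPWL} permits $f_j^c=f_k^c$, so that $D_j^c=D_k^c$ is a single region despite $j\neq k$. Then \cref{lemma_line_cross_face} does not apply, and the maximal assignment (setting $\delta_{i,j}^c=1$ whenever $\bm{x}_i\in D_j^c$) can actually make the left-hand side of \eqref{eq14} equal to $2$. I would resolve this by not assigning indicators maximally across coincident pieces: within each class of identical affine pieces I would assign every data point of the common region to a single representative index and leave the duplicate indicators at $0$, which is still feasible because \cref{ConvexEquation} is one-directional and \cref{numberplaneperpoint} requires only one active domain per point. Then each $T_j=\{i:\delta_{i,j}^c=1\}$ is either the full convex region (hence simplex-closed, so \eqref{eq13} is preserved via \cref{lemma_point_in_simplex}) or empty, which removes every coincident pair from the violating configuration of \eqref{eq14}, while all genuinely distinct pairs are handled by the argument above. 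A parallel check is needed for data points sitting on a shared boundary of two distinct regions, where one verifies that the contradiction in the proof of \cref{lemma_line_cross_face} still goes through.
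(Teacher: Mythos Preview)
Your proposal is correct and follows essentially the same route as the paper: apply \cref{lemma_point_in_simplex} and \cref{lemma_line_cross_face} to the convex partition $\{D_j^c\}_j$ supplied by \cref{fact1}, then translate the resulting implications into the indicator inequalities by a case split on the value of $\sum_n\delta_{p_n,j}^c$. The paper's argument is terser---it simply asserts the biconditional $\delta_{i,j}^c=1\Leftrightarrow \bm{x}_i\in D_j^c$ and reads off both constraints---whereas you go further and handle the degenerate case $f_j^c=f_k^c$ (where the maximal $\delta$-assignment would violate \eqref{eq14}) by routing all data points of a coincidence class to a single representative index; this refinement is not in the paper's proof but is indeed what is needed to make the ``no \gls{cpwl} solution is eliminated'' claim fully rigorous.
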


\begin{proof}[Proof of \cref{theorem_convex_domain}]
    According to \cref{fact1} and \cref{fact2}, $\{D_j^c\}_{j=1,...,P^c}$ forms a partition of $D$ into convex regions. Therefore, we can apply \cref{lemma_point_in_simplex} and \cref{lemma_line_cross_face}. We have $\delta_{i,j}^c = 1 \Leftrightarrow \bm{x_i} \in D_j^c$ and $\Delta^d(p_1,...,p_{d+1}) \subset D_j^c \Leftrightarrow \sum_{n=1}^{d+1}{\delta_{p_n,j}^c} = d+1$. Therefore, ``$\Delta^d(p_1,...,p_{d+1}) \subset D_j^c \Rightarrow \bm{x}_q \in D_j^c$'' is equivalent to ``$\sum_{n=1}^{d+1}{\delta_{p_n,j}^c} - d = 1 \Rightarrow \delta_{q,j}^c=1$'', which can be expressed as: $\sum_{n=1}^{d+1}{\delta_{p_n,j}^c} - d \leq \delta_{q,j}^c$. Similarly, ``$(\Delta^{d-1}(p_1,...,p_d) \subset D_j^c) \wedge (\bm{x}_{q_1} \in D_k^c) \Rightarrow \bm{x}_{q_2} \notin D_k^c$'' is equivalent to ``$\sum_{n=1}^{d}{\delta_{p_n,j}^c} - d + \delta_{q_1,k}^c = 1 \Rightarrow \delta_{q_2,j}^c=0$'', which can be expressed as: $\sum_{n=1}^{d}{\delta_{p_n,j}^c} - d + \delta_{q_1,k}^c + \delta_{q_2,k}^c \leq 1$. 
\end{proof}

\Cref{fig_simplex_face} illustrates an example involving a combination of $d$-simplex and point considered in \cref{eq13}, as well as $(d-1)$-simplex and line segment considered in \cref{eq14} for the case $d=2$. \Cref{theorem_convex_domain} implies that if points 1, 2, and 3 all belong to the same affine domain, then so does point 4. In addition, if points 5 and 6 belong to the same affine domain, distinct from the affine domain point 7 belongs to, then points 7 and 8 must belong to distinct affine domains. The impact of these equations on the feasible region of $MILP1(Q)$ remains unclear, but they do not eliminate any \gls{cpwl} solutions, well-behaved or not.

\begin{figure}
	\centering 
	\includegraphics[width=0.8\textwidth]{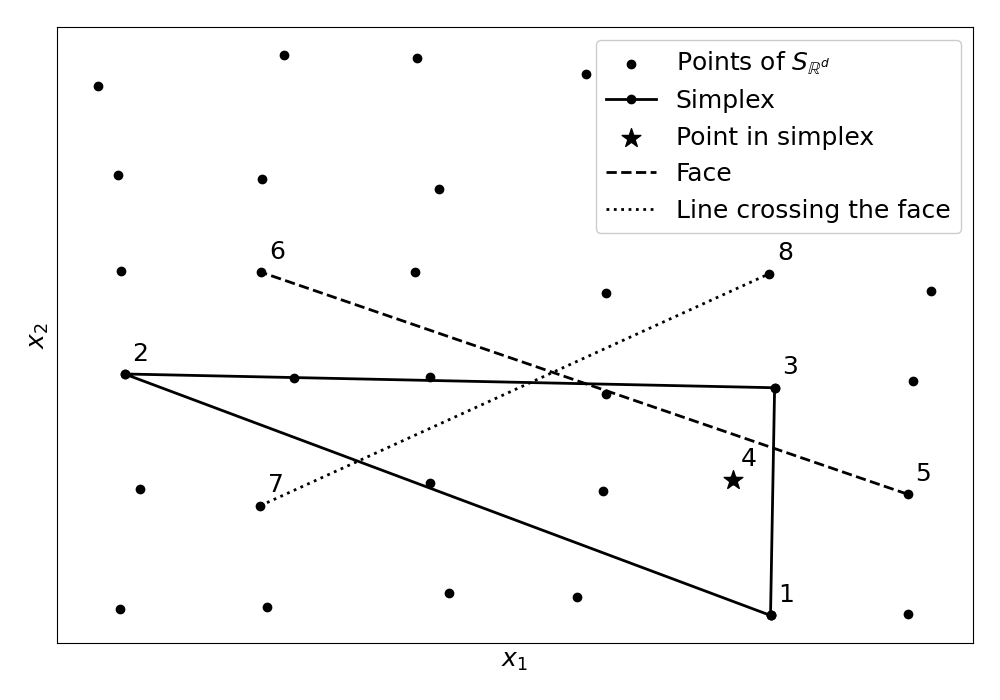}	
	\caption{Example in $d=2$ of a point (4) inside a 2-simplex (1,2,3) and a line segment (7,8) crossing a 1-simplex (5,6).} 
	\label{fig_simplex_face}
\end{figure}

\begin{proposition}
\label{proposition_number_simplex_csts}
    The number of constraints from \cref{eq13} is at most $\binom{N}{d+2}(d+2)(P^++P^-)$. The number of constraints from \cref{eq14} is at most $\binom{N}{d+2}(d+1)(d+2)\left(P^+(P^+-1) + P^-(P^--1)\right)/2$.
\end{proposition}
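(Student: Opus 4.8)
The plan is to treat both \eqref{eq13} and \eqref{eq14} purely as indexed families of inequalities and to bound their cardinality by counting the admissible index tuples, discarding the geometric side-conditions (which only ever remove tuples) to obtain an upper bound. The key organizing idea is that in each case the combinatorial index factors into independent choices: a set of points, a designation of roles among those points, and a choice of the piece indices $(j,c)$ or $(j,k,c)$.

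First I would handle \eqref{eq13}. A constraint there is determined by the data $(\{p_1,\dots,p_{d+1}\}, q, j, c)$, where $\{p_1,\dots,p_{d+1}\}$ is an unordered set (since $\Delta^d(p_1,\dots,p_{d+1})$ does not depend on the ordering), $q$ is a further distinct point, and $(j,c)$ ranges over $P^+ + P^-$ values ($P^+$ choices of $j$ when $c=\text{``}+\text{''}$ and $P^-$ when $c=\text{``}-\text{''}$). I would decouple this into: (i) the underlying $(d+2)$-point set $\{p_1,\dots,p_{d+1},q\}$, bounded by $\binom{N}{d+2}$; (ii) the choice of which of those $d+2$ points plays the role of the interior point $q$, bounded by $d+2$; and (iii) the pair $(j,c)$, bounded by $P^+ + P^-$. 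Because the requirement $\bm{x}_q \in \Delta^d(p_1,\dots,p_{d+1})$ fails for most designations, this is an upper bound, giving $\binom{N}{d+2}(d+2)(P^++P^-)$. As a sanity check I would note the identity $\binom{N}{d+1}(N-d-1) = \binom{N}{d+2}(d+2)$, confirming that this equals the more direct count ``choose $d+1$ vertices, then one distinct interior point.''

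For \eqref{eq14} the argument is parallel but with two roles played by the point set and an ordered piece pair. A constraint is indexed by $(\{p_1,\dots,p_d\}, \{q_1,q_2\}, j, k, c)$, where the simplex vertices and the segment endpoints are disjoint, both are unordered (the endpoints appear symmetrically as $\delta_{q_1,k}^c + \delta_{q_2,k}^c$), and $(j,k)$ is \emph{ordered} with $j \neq k$ because $j$ indexes the domain containing the $(d-1)$-simplex while $k$ indexes the domain of the segment, so the two are not interchangeable. I would choose the $(d+2)$-point set ($\binom{N}{d+2}$ ways), split it into the $2$-element segment set and the complementary $d$-element simplex set ($\binom{d+2}{2}$ ways), and pick the ordered pair $(j,k)$ with $j\neq k$ inside a fixed $c$ ($P^c(P^c-1)$ ways), summing over $c$ to get $P^+(P^+-1)+P^-(P^--1)$. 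Multiplying and using $\binom{d+2}{2} = (d+1)(d+2)/2$ yields the stated bound, again an upper bound because the crossing condition $\Delta^{d-1}\cap\Delta^1 \neq \varnothing$ prunes many tuples; the identity $\binom{N}{d}\binom{N-d}{2} = \binom{N}{d+2}\binom{d+2}{2}$ cross-checks the count.

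The part requiring the most care, rather than a genuine obstacle, is bookkeeping the ordered-versus-unordered distinctions consistently: the simplex vertices and the segment endpoints are unordered sets, but the piece indices $(j,k)$ in \eqref{eq14} are an ordered pair with distinct entries. Getting this right is exactly what separates the factor $P^c(P^c-1)$ from either $\binom{P^c}{2}$ or $(P^c)^2$. Everything else is routine combinatorics, and the legitimacy of reporting only ``at most'' follows because the geometric predicates are never used to create index tuples, only to restrict them.
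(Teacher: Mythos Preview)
Your argument is correct and matches the paper's proof: both bound the number of index tuples by counting unordered point sets, role assignments, and piece indices, and both invoke the identities $\binom{N}{d+1}(N-d-1)=\binom{N}{d+2}(d+2)$ and $\binom{N}{d}\binom{N-d}{2}=\binom{N}{d+2}\binom{d+2}{2}$. The only cosmetic difference is that the paper parametrizes by ``simplex first, then remaining point(s)'' whereas you parametrize by ``full $(d+2)$-set first, then assign roles,'' which are equivalent via exactly those identities; your explicit discussion of why $(j,k)$ is ordered in \eqref{eq14} is a welcome clarification the paper leaves implicit.
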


\begin{proof}[Proof of \cref{proposition_number_simplex_csts}]
    There are $\binom{N}{d+1}$ possible $d$-simplices and there are $(N-d-1)$ remaining points that may or may not belong to each simplex. Each constraint of \cref{eq13} applies to a single affine domain of $f^+$ or $f^-$. Furthermore, $\binom{N}{d+1}(N-d-1) = \binom{N}{d+2}(d+2)$. There are $\binom{N}{d}$ possible $(d-1)$-simplices and there are $\binom{N-d}{2}$ possible line segments that may or may not cross each $(d-1)$-simplex. Each constraint of \cref{eq14} applies to two distinct affine domains of $f^+$ or $f^-$. In addition, $\binom{N}{d}\binom{N-d}{2} = \binom{N}{d+2}\frac{(d+1)(d+2)}{2}$.
\end{proof}

\section{Proof of \cref{time_complexity}}
\label{proof_time_complexity}

The time complexity for calculating the bounds and big-M parameters is $O(2^{d+1}N^{d+2}d)$.

\begin{proof}[Proof of \cref{time_complexity}]
    To compute these values, we first need to identify all the affine functions of $A^*_{\varepsilon}(S)$. As seen in \cref{number_affine_functions}, the number of such affine functions is $\binom{N}{d+1}2^{d+1}$. As seen in \cref{fact_linear_interpolation}, for a given $s=(\bm{x}_{i_k},z_{i_k})_{k=1,...,d+1} \in [S]^{d+1}$, the linear coefficients $a_r$ and bias terms $b$ of the affine functions are calculated by solving the $2^{d+1}$ linear systems:
    $$
    \bm{M}_s 
    \left(
    \begin{array}{c}
        a_1 \\
        \vdots \\
        a_d \\
        b 
        \end{array}
    \right)
    = \bm{z}_s + \bm{e}
    $$
    where
    $\bm{M}_s = \left(
        \begin{array}{cccc}
        x_{i_1,1}
        & \cdots 
        & x_{i_1,d} 
        & 1  \\
        \vdots 
        & \ddots 
        & \vdots 
        & \vdots\\
        x_{i_{d+1},1}  
        & \cdots 
        & x_{i_{d+1},d} 
        & 1
        \end{array}
    \right) $,
    $\bm{z}_s=\left(
    \begin{array}{c}
        z_{i_1} \\
        \vdots \\
        z_{i_{d+1}} 
        \end{array}
    \right)$, and $\bm{e} \in \{-\varepsilon,\varepsilon\}^{d+1}$.
    That is, the affine functions are calculated by computing $\bm{M}_s^{-1}(\bm{z}_s + \bm{e})$.
    As a first step, we need to calculate the inverse $\bm{M}_s^{-1}$ for $\binom{N}{d+1}$ matrices in $M_{d+1}(\mathbb{R})$. A matrix inversion has a time complexity of $O((d+1)^3)$, and $\binom{N}{d+1} = O(N^{d+1})$. Thus, the total number of operations in that step is $O(N^{d+1}(d+1)^3)$. As a second step, for each of the $\binom{N}{d+1}$ $s \in [S]^{d+1}$, the coefficients $a_r$ and $b$ are calculated by computing $\bm{M}_s^{-1}(\bm{z}_s + \bm{e})$ for $2^{d+1}$ vectors $\bm{z}_s + \bm{e}$. The total time complexity for that step is $O((2N)^{d+1}(d+1)^2)$. Assuming $N \gg d$, the time complexity for the second step dominates that of the first step, and the overall time complexity of identifying all affine functions in $A^*_{\varepsilon}(S)$ is $O((2N)^{d+1}(d+1)^2)$.
    Calculating $\underline{a_r}$, $\overline{a_r}$, $\underline{b}$, $\overline{b}$ require calculating $2(d+1)$ extrema over a range of $\binom{N}{d+1}2^{d+1}$ values, which has a total complexity of $O((2N)^{d+1}(d+1))$. The time complexity of calculating $\overline{a'_r}$ and $\overline{b'}$ is only $O(d+1)$. For a given $i \in \{1,...,N\}$, computing $M_i^-$ requires evaluating the value of each of the $\binom{N}{d+1}2^{d+1}$ affine functions at the point $\bm{x}_i$, which requires $d+1$ operations for each affine function, and then calculate the extrema over the range of values. Because it has to be done for each point, the time complexity of calculating the $M_i^-$ is $O(N(2N)^{d+1}(d+1))$, which dominates the procedure of calculating all affine functions. Note that calculating the $M_i^+$ only requires $O(N)$ additional operations since $M_i^+ = \frac{\min(P^+-1,P^-)}{\min(P^--1,P^+)}M_i^-$. Finally, once the $M_i^c$ have been calculated, calculating $M_{p,q}^c = M_p^c + M_q^c$ only requires $O(N^2)$ operations. 

\end{proof}

\end{document}